\newtheorem{theorem}{Theorem}[section]
\newtheorem{lemma}[theorem]{Lemma}
\numberwithin{equation}{section}
\def\UU{{\mathfrak U}}
\def\FF{{\mathfrak F}}
\def\LL{{\mathscr L}}
\providecommand{\customgenericname}{}
\newcommand{\newcustomtheorem}[2]{%
	\newenvironment{#1}[1]
	{%
		\renewcommand\customgenericname{#2}%
		\renewcommand\theinnercustomgeneric{##1}%
		\innercustomgeneric
	}
	{\endinnercustomgeneric}
}
\theoremstyle{definition}
\newtheorem{definition}[theorem]{Definition}
\newtheorem{example}[theorem]{Example}
\theoremstyle{definition}
\begin{document}
\begin{sloppypar}




\begin{center}
{\Large \bf{On the partial $ \mathscr L $-$ \Pi $-property of subgroups of finite groups}

\renewcommand{\thefootnote}{\fnsymbol{footnote}}

\footnotetext[1]
{Corresponding author.}

}\end{center}

                        \vskip0.6cm
\begin{center}

                       Zhengtian Qiu$ ^{1} $ and Adolfo Ballester-Bolinches$^{2, \ast}$

                            \vskip0.5cm

       $ ^{1} $School of Mathematics and Statistics, Southwest University,

       Chongqing 400715, People's Republic of China

       $ ^{2} $Departament de Matem$ \grave{\mathrm{a}} $tiques, Universitat de Val$ \grave{\mathrm{e}} $ncia, Dr. Moliner 50,
       46100 Burjassot, Valencia, Spain

      E-mail addresses:  qztqzt506@163.com \,    \  Adolfo.Ballester@uv.es

\end{center}

                          \vskip0.5cm

\begin{abstract}
	Let $ H $ be  a subgroup of a finite group $ G $.  We say that $ H $ satisfies the partial $ \mathscr L $-$ \Pi $-property in $ G $ if $ H\unlhd G $, or if $ | G / K : \mathrm{N} _{G / K} (HK/K)| $ is a $ \pi (HK/K) $-number for any  $ G $-chief factor  of type $ H^{G}/K $ with $ H_{G}\leq K $.  In this paper, we investigate the structure of  finite groups   under the assumption that some subgroups of prime power order satisfy  the partial $ \mathscr L $-$ \Pi  $-property.

\end{abstract}

{\hspace{2em} \small \textbf{Keywords:} Finite group, $ p $-soluble group, the partial $ \mathscr L $-$ \Pi $-property.}
	
\vskip0.1in

{\hspace{2em} \small \textbf{Mathematics Subject Classification (2020):} 20D10, 20D20.}

\section{Introduction}


All groups considered in this paper are finite. We use conventional notions as in \cite{MR1169099} or \cite{Gorenstein-1980}.

Throughout the paper,  $ G $ always denotes a finite group, $ p $ denotes a fixed prime, $ \mathrm{C}_{p} $ denotes a cyclic group of order $ p $, $ \pi $ denotes some set of primes  and $ \pi(G) $ denotes the set of all primes dividing $ |G| $. An integer $ n $ is called a $ \pi $-number if all prime divisors of $ n $ belong to $ \pi $. In particular, an integer $ n $ is called a $ p $-number if it is a power of $ p $. The symbol $ \mathbb{F}_{p} $ denotes the finite field with $ p $ elements. Recall that the $ p $-rank  of a $ p $-soluble group $ G $ is the largest integer $ k $ such that $ G $ has a chief factor of order $ p^{k} $. Note that a group $ G $ is $ p $-supersoluble if and only if $ G $ is $ p $-soluble with $ p $-rank at most $ 1 $.


Recall that a class $ \FF $  of groups is called a formation if $ \FF $ is closed under taking homomorphic images and subdirect products.  A formation $ \FF $ is said to be saturated if $ G/\Phi(G) \in \FF $  implies that $ G \in \FF $.  Throughout  this paper, we  use $ \UU $ (respectively, $ \UU_{p} $) to denote the class of supersoluble (respectively, $ p $-supersoluble) groups.

Let $ \FF $ be a formation.  A chief factor $ L/K $ of a group $ G $ is said to be $ \FF $-central in $ G $ if $ L/K \rtimes G/C_{G}(L/K) \in \FF $.   A normal subgroup $ N $ of $ G $ is called $ \FF $-hypercentral in $ G $ if either $ N = 1 $ or every $ G $-chief factor below $ N $ is $ \FF $-central in $ G $. Let $ \mathrm{Z}_{\FF}(G) $ denotes the $ \FF $-hypercentre of $ G $, that is, the product of all $ \FF $-hypercentral normal subgroups of $ G $.




It is interesting to investigate the influence of some kind of embedding properties of subgroups on the structure of a finite group and many significant results have been given (for example, see \cite{Adolfo-2011, Adolfo-JPA, Adolfo-2015, Qian-2021}). In 2013, Chen and Guo \cite{Chen-2013} introduced the concept of the \emph{partial $ \Pi $-property} of subgroups of finite groups,  which generalizes a large number of known embedding
properties (see \cite[Section 7]{Chen-2013}). Let $ H $ be a subgroup of a group $ G $. We say that $ H $ satisfies the \emph{partial $ \Pi $-property} in $ G $ if  there exists a chief series $ \varGamma_{G}: 1 =G_{0} < G_{1} < \cdot\cdot\cdot < G_{n}= G $ of $ G $ such that for every $ G $-chief factor $ G_{i}/G_{i-1} $ $ (1\leq i\leq n) $ of $ \varGamma_{G} $, $ | G / G_{i-1} : \mathrm{N} _{G/G_{i-1}} (HG_{i-1}/G_{i-1}\cap G_{i}/G_{i-1})| $ is a $ \pi (HG_{i-1}/G_{i-1}\cap G_{i}/G_{i-1}) $-number. They obtained the following results by assuming that some maximal or minimal subgroups of a Sylow subgroup  satisfiy the partial $ \Pi $-property.

\begin{theorem}[{\cite[Proposition 1.4]{Chen-2013}}]\label{maximal}
	Let $ E $ be a normal subgroup of a group $ G $ and let $ P $ be a Sylow $ p $-subgroup of $ E $. If every maximal subgroup of $ P $ satisfies the partial $ \Pi $-property in $ G $, then either $ E\leq \mathrm{Z}_{\UU_{p}}(G) $ or $ |E|_{p}=p $.
\end{theorem}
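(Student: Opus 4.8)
The plan is to argue by induction on $|G|$. If $|P|=p$ the second alternative holds, so from now on assume $|P|\geq p^{2}$ and let us prove $E\leq \mathrm{Z}_{\UU_{p}}(G)$. The core of the argument will be the claim $(\ast)$: \emph{if $N$ is a minimal normal $p$-subgroup of $G$ with $N\leq E$ and $N\not\leq\Phi(G)$, then $|N|=p$.}

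Granting $(\ast)$, the theorem follows by the customary reductions. Pick a minimal normal subgroup $N$ of $G$ contained in $E$. The facts that make the reduction work are the standard hereditary behaviour of the partial $\Pi$-property (if $M\leq G$ satisfies it in $G$ and $N\trianglelefteq G$, then $MN/N$ satisfies it in $G/N$, and $M/N$ does so when $N\leq M$) and that $\UU_{p}$ is a saturated formation. If $N$ is a $p'$-group, then $PN/N$ is a Sylow $p$-subgroup of $E/N$ of order $|P|\geq p^{2}$ whose maximal subgroups are exactly the $MN/N$ with $M$ maximal in $P$, so induction in $G/N$ yields $E/N\leq \mathrm{Z}_{\UU_{p}}(G/N)$, and one lifts this to $E\leq \mathrm{Z}_{\UU_{p}}(G)$. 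If $N$ is a $p$-group, then $N\leq P$; when $N\leq\Phi(G)$ the lifting step is automatic by saturation, so assume $N\not\leq\Phi(G)$, whence $(\ast)$ gives $|N|=p$. If moreover $|P|>p^{2}$, then $|P/N|>p$, the maximal subgroups of $P/N$ are the $M/N$ with $M$ maximal in $P$ and $N\leq M$, and induction modulo $N$ again closes the case; and if $|P|=p^{2}$, then $|N|=p$ forces, by a short order count using $|E|_{p}=p^{2}$ (a $G$-chief factor $H/K$ below $E$ with $N\cap H=1$ would give $|NH|_{p}\geq p^{3}$), that no $G$-chief factor of order $p^{2}$ lies below $E$, so again $E\leq \mathrm{Z}_{\UU_{p}}(G)$.

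It remains to prove $(\ast)$. First, $N\not\leq\Phi(P)$: otherwise, $N$ being normal in $E$, a non-generator argument ($P=N(P\cap\Lambda)=\Phi(P)(P\cap\Lambda)=P\cap\Lambda$ for a hypothetical maximal subgroup $\Lambda$ of $E$ not containing $N$) would give $N\leq\Phi(E)\leq\Phi(G)$, contrary to assumption. Now suppose for contradiction that $|N|\geq p^{2}$; then $N$ is elementary abelian. For every hyperplane $W$ of $N$ containing $N\cap\Phi(P)$ one can extend $W$ to a maximal subgroup $M$ of $P$ with $M\cap N=W$ (this is where $N\not\leq\Phi(P)$ is used). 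By hypothesis $M$ satisfies the partial $\Pi$-property in $G$, and, $N$ being a minimal normal subgroup of $G$, the defining condition may be evaluated at the chief factor $N=N/1$: hence $|G:\mathrm{N}_{G}(W)|$ is a $\pi(W)$-number, so a power of $p$. Since $N$ is abelian, $C_{G}(N)\leq\mathrm{N}_{G}(W)$, so, with $\overline{G}=G/C_{G}(N)$, the $\overline{G}$-stabiliser of each such $W$ has $p$-power index, while $\overline{G}$ acts faithfully and irreducibly on $N$. Letting $W$ range over all these hyperplanes, and using irreducibility of the $\overline{G}$-module $N$ to reduce to the case $N\cap\Phi(P)=1$ (in which \emph{every} hyperplane of $N$ has $p$-power-index stabiliser), one forces a Sylow $p$-subgroup of $\overline{G}$ to act on $N$ in a manner incompatible with faithful irreducibility unless $\dim_{\mathbb{F}_{p}}N=1$, that is, $|N|=p$, a contradiction.

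I expect $(\ast)$, and especially this concluding module-theoretic step, to be the main obstacle. The difficulty is that ``$p$-power index of the hyperplane stabilisers'' is by itself too weak --- a $p$-perfect group can still possess a subgroup of $p$-power index --- so one must genuinely combine it with faithfulness and irreducibility of the action of $\overline{G}$ on $N$, together with the freedom to vary $W$ over all hyperplanes of $N$ containing $N\cap\Phi(P)$, in order to cut $\overline{G}$ down to a faithful one-dimensional module. Handling the subcase $N\cap\Phi(P)\neq1$ cleanly --- again through the minimality of $N$ as a $G$-module --- is the technical heart of this step.
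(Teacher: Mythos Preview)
This theorem is quoted in the paper from \cite[Proposition~1.4]{Chen-2013} and is not proved here, so there is no in-paper proof to compare your attempt against.

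Your outline has two genuine gaps beyond the one you already flag. First, in the proof of $(\ast)$ you assert that ``the defining condition may be evaluated at the chief factor $N=N/1$''; but the partial $\Pi$-property only supplies \emph{some} chief series $\varGamma_G$, and its bottom term $G_1$ need not be your chosen minimal normal subgroup $N$, so you cannot directly conclude that $|G:\mathrm{N}_G(M\cap N)|$ is a $p$-number. A standard repair is to reduce first, via induction, to the case where $N$ is the unique minimal normal subgroup of $G$, so that every chief series begins with $N$; but then $(\ast)$ must be invoked only after that reduction, and your argument would have to be restructured accordingly. Second, the case $N\leq\Phi(G)$ is not handled: you claim that ``the lifting step is automatic by saturation'', but $E/N\leq\mathrm{Z}_{\UU_p}(G/N)$ lifts to $E\leq\mathrm{Z}_{\UU_p}(G)$ only if the chief factor $N$ is itself $\UU_p$-central, i.e.\ $|N|=p$; saturation of $\UU_p$ does not supply this for free, and your claim $(\ast)$ explicitly excludes the case $N\leq\Phi(G)$.

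As for the module-theoretic step you worry about: once the first issue above is resolved and one has $|G:\mathrm{N}_G(W)|$ a $p$-power for \emph{every} hyperplane $W$ of $N$, a short orbit count does the job, since the $G$-orbits on the set of all hyperplanes of $N$ then have $p$-power lengths summing to $(|N|-1)/(p-1)\equiv 1\pmod p$, forcing a $G$-fixed hyperplane and contradicting irreducibility of $N$. The difficulty is rather your proposed reduction to $N\cap\Phi(P)=1$: this set of hyperplanes is not $G$-stable when $N\cap\Phi(P)\neq 1$ (because $\Phi(P)$ is only $P$-invariant, not $G$-invariant), and $G$-irreducibility of $N$ by itself does not let you discard $N\cap\Phi(P)$. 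So this reduction, which you describe only in passing, is where the real work lies.
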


\begin{theorem}[{\cite[Proposition 1.6]{Chen-2013}}]\label{minimal}
	Let $ E $ be a normal subgroup of a group $ G $ and let $ P $ be a Sylow $ p $-subgroup of $ E $. If every cyclic subgroup of $ P $ of prime order or order $ 4 $ (when $ P $ is
	not quaternion-free)  satisfies the partial $ \Pi $-property in $ G $, then  $ E\leq \mathrm{Z}_{\UU_{p}}(G) $.
\end{theorem}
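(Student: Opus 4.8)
The plan is to argue by contradiction, fixing a counterexample $(G,E)$ with $|G|+|E|$ minimal. I would begin with the routine reductions. Recall from \cite{Chen-2013} that the partial $\Pi$-property is inherited by quotients (if $H$ has it in $G$ and $N\trianglelefteq G$, then $HN/N$ has it in $G/N$), and that when $N\trianglelefteq G$ is a $p'$-group one may identify $P$ with $PN/N$, so the hypothesis on cyclic subgroups of $P$ of order $p$ or $4$ persists in $G/N$. Using this, the well-known inclusion $O_{p'}(G)\le\mathrm{Z}_{\UU_{p}}(G)$, and the observation that above a normal subgroup $N$ a $G$-chief factor is $\UU_{p}$-central exactly when the corresponding $G/N$-chief factor is, minimality lets me pass to $G/O_{p'}(E)$ and so assume $O_{p'}(E)=1$.

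The core is to take a minimal normal subgroup $N$ of $G$ contained in $E$ and show $|N|=p$. Since $O_{p'}(E)=1$, $N$ is not a $p'$-group, so $N=S_{1}\times\cdots\times S_{t}$ with the $S_{i}$ isomorphic to a simple group $S$ with $p\mid|S|$ (were $S$ a $p'$-group, so would be $N$). Put $P_{1}=P\cap S_{1}\in\mathrm{Syl}_{p}(S_{1})$; every subgroup $\langle x\rangle\le P_{1}$ of order $p$ is a cyclic subgroup of $P$ of order $p$, hence enjoys the partial $\Pi$-property in $G$. Take a chief series of $G$ witnessing this; if $G_{j}$ is its smallest term containing $N$ then $N\cap G_{j-1}=1$ and $G_{j}=NG_{j-1}$, so $G_{j}/G_{j-1}\cong N$ compatibly with the $G$-action, and the defining inequality at this factor says precisely that the $G/G_{j-1}$-conjugacy class of $\langle x\rangle G_{j-1}/G_{j-1}$ has $p$-power length. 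Reading this through $\mathrm{Aut}(N)=\mathrm{Aut}(S)\wr\mathrm{Sym}(t)$ shows that the orbit of $\langle x\rangle$ under the almost simple group $A$ with socle $S$ induced on $S_{1}$ has $p$-power length, and this holds for every such $\langle x\rangle$. As each subgroup of order $p$ of $S_{1}$ is $S_{1}$-conjugate into $P_{1}$, \emph{every} $A$-orbit of subgroups of order $p$ of $S_{1}$ has $p$-power length; since the number of such subgroups is $\equiv 1\pmod p$ by a Frobenius count, one of them must be normalised by $A\ge S_{1}$, contradicting the simplicity of $S$. Hence $N$ is elementary abelian of order $p^{k}$, and running the very same count on the $1+p+\cdots+p^{k-1}\equiv 1\pmod p$ lines of $N$ produces a $G$-invariant line in the irreducible $G$-module $N$, forcing $k=1$. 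Notably this whole core needs no classification of simple groups.

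As $N$ is now a chief factor of order $p$, it is $\UU_{p}$-central, so $N\le\mathrm{Z}_{\UU_{p}}(G)$; it remains to pass to the smaller pair $(G/N,E/N)$, for which $P/N$ is the relevant Sylow subgroup. If $\bar Y=Y/N$ is a cyclic subgroup of $P/N$ of order $p$ whose preimage $Y$ in $P$ is non-cyclic, then $N$ has a complement $Z$ of order $p$ in $Y$ and $ZN/N=\bar Y$ inherits the partial $\Pi$-property in $G/N$ from $Z$ in $G$; the residual case, where $Y$ is cyclic, is handled via the clause about cyclic subgroups of order $4$ together with the quaternion-free hypothesis when $p=2$, and via the saturation of $\UU_{p}$ when $p$ is odd. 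Granting that the hypothesis survives in $G/N$, minimality gives $E/N\le\mathrm{Z}_{\UU_{p}}(G/N)$, and because $|N|=p$ and $\UU_{p}$-centrality of factors above $N$ is unaffected by factoring out $N$, this promotes to $E\le\mathrm{Z}_{\UU_{p}}(G)$ --- the sought contradiction.

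The step I expect to be the main obstacle is precisely this last passage to $G/N$: a cyclic subgroup of $P/N$ of order $p$ need not be the image of a cyclic subgroup of $P$ of order $p$ --- it may arise only from one of order $p^{2}$ --- and reconciling this with the partial $\Pi$-property, which is phrased via a chief series of the full ambient group, is the delicate bookkeeping and is where the order-$4$ hypothesis and the quaternion-free hypothesis pay for themselves. By comparison the opening reductions are routine and the two ``$\equiv 1\pmod p$'' counts at the heart of the argument are short and self-contained.
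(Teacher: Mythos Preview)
Your counting argument for $|N|=p$ is correct and rather nice: for each order-$p$ subgroup $\langle x\rangle\leq N$ the partial $\Pi$-property, read at the chief factor $G$-isomorphic to $N$ (where $N\cap G_{j-1}=1$ forces $N_G(\langle x\rangle G_{j-1})=N_G(\langle x\rangle)$), makes $|G:N_G(\langle x\rangle)|$ a $p$-power, and the Frobenius count then yields a $G$-invariant subgroup of order $p$. The detour through an almost simple group acting on $S_1$ is unnecessary --- the same count works directly in $N$ and disposes of the abelian and non-abelian cases simultaneously.

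The genuine gap is exactly where you flag it, and your proposed fixes do not close it. For odd $p$, ``saturation of $\UU_p$'' says nothing about whether a given subgroup of $G/N$ satisfies the partial $\Pi$-property: if $Y/N\leq P/N$ is cyclic of order $p$ with $Y\cong C_{p^2}$, the only order-$p$ subgroup of $Y$ is $N$ itself, so $Y/N$ is not the image of any order-$p$ subgroup of $P$, and there is nothing to inherit. For $p=2$ the situation is worse still: even when $P$ is not quaternion-free, a cyclic subgroup $Y/N\cong C_4$ of $P/N$ can have preimage $Y\cong C_8$, about which you have no hypothesis; so the order-$4$ clause for $P/N$ cannot be verified and the induction does not go through.

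The paper (which proves the stronger Theorem~\ref{super} rather than Theorem~\ref{minimal} itself) avoids this quotient descent altogether. It first settles the case of a \emph{normal} $p$-subgroup (Theorem~\ref{small}) via Thompson critical subgroups and the characteristic-index-$2$ lemma for quaternion-free $2$-groups, which absorbs the exponent and order-$4$ bookkeeping once. For general $E$ it then argues that a minimal counterexample has $E$ not $p$-soluble (the $p$-soluble case reducing to Theorem~\ref{small} through $\mathrm{F}^*_p(E)=\mathrm{O}_p(E)$), exhibits a \emph{unique} maximal $G$-invariant subgroup $U\leq\mathrm{Z}_{\UU_p}(G)$ of $E$, shows every cyclic subgroup of $P$ of order $p$ or $4$ already lies inside $U$, and finishes with Lemma~\ref{hyper}. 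The point is that the hypothesis is never required to pass to a quotient of $P$; instead the relevant cyclic subgroups are located inside a subgroup already known to sit in the hypercentre.
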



From the definition of the partial $ \Pi $-property of subgroups, all $ G $-chief factors of $ \varGamma_{G} $ have been  considered.  Now we introduce the following concept by considering fewer $ G $-chief factors, which generalizes the partial $ \Pi $-property of subgroups (see Lemma \ref{passing}).

\begin{definition}
	Let $ H $ be  a subgroup of a group $ G $, we say that $ H $ satisfies the \emph{partial $ \LL $-$ \Pi $-property}  in $ G $ if $ H\unlhd G $, or if $ | G / K : \mathrm{N} _{G / K} (HK/K)| $ is a $ \pi (HK/K) $-number for any  $ G $-chief factor  of type $ H^{G}/K $ with $ H_{G}\leq K $, which means that $K$ is a maximal $G$-invariant subgroup of $H^{G}$ with $ H_{G}\leq K $.
\end{definition}


According to Lemma \ref{passing}, we see that if a subgroup $ H $ of a group $ G $ satisfies the partial $ \Pi $-property in $ G $, then $ H $ satisfies the partial $ \LL $-$ \Pi $-property in $ G $.  But the converse fails in general. Let us see the following  example.

\begin{example}
	Let $ G = \mathrm{Sym}(4) $ be the symmetric group of degree $ 4 $. It is easy to see that every subgroup  of $ G $ of order $ 4 $ satisfies the partial $ \LL $-$ \Pi $-property in $ G $.  However, $ \langle (1234)\rangle $ does not satisfy the partial $ \Pi $-property in $ G $.
\end{example}

At first, we can get the following result by assuming that some minimal subgroups satisfy the partial $ \LL $-$ \Pi $-property.

\begin{customtheorem}{A}\label{super}
	Let $ E $ be a normal subgroup of a group $ G $ and let $ P $ be a Sylow $ p $-subgroup of $ E $. If every cyclic subgroup of $ P $ of prime order or order $ 4 $ (when $ P $ is not quaternion-free)  satisfies the partial $\LL$-$ \Pi $-property in $ G $, then  $ E\leq \mathrm{Z}_{\UU_{p}}(G) $.
\end{customtheorem}







Here, we study the structure of a group $ G $ with some subgroups of prime power order satisfying the partial $ \LL $-$ \Pi $-property.

  \begin{customtheorem}{B}\label{two}
  	Let $ P \in {\rm Syl}_{p}(G) $ and let $ d $ be a  power of $ p $ such that $ p \leq d \leq \sqrt{|P|} $. Assume that every subgroup of $ P $ of order $ d $ satisfies the partial $ \LL $-$ \Pi $-property in $ G $, and assume further that every cyclic subgroup of $ P $ of order $ 4 $  satisfies the partial $ \LL $-$ \Pi $-property in $ G $ when $ d = 2 $ and $ P $  is not quaternion-free. Then $ G $ is
  	$ p $-soluble and $ G/\mathrm{O}_{p',p}(G) $ is $ p $-supersoluble. Furthermore, if the $p$-rank of $G$ is greater than $1$, then
  	
  	\begin{enumerate}[\rm(1)]
  		\item $ d\geq p^{2}|P\cap \mathrm{O}_{p',\Phi}(G)| $, where $ \mathrm{O}_{p',\Phi}(G) $ is a normal subgroup of $ G $ such that $ \mathrm{O}_{p',\Phi}(G)/\mathrm{O}_{p'}(G)= \Phi(G/\mathrm{O}_{p'}(G)) $;
  		\item $\mathrm{O}_{p',p}(G)/\mathrm{O}_{p',\Phi}(G)$  is a homogeneous $ \mathbb{F}_{p}[G] $-module with all its irreducible $ \mathbb{F}_{p}[G] $-submodules being not absolutely irreducible and having
  		dimension $ k>1 $ such that $ k\big | \log_p \frac{d}{|P\cap \mathrm{O}_{p',\Phi}(G)|} $.
  	\end{enumerate}
  \end{customtheorem}

  In fact, we cannot obtain the $ p $-supersolubility of $ G $ in Theorem \ref{two}. Let us see the following example.

  \begin{example}
  	Consider an elementary abelian group $ U=\langle x, y|x^{5}=y^{5}=1, xy=yx \rangle $ of order $ 25 $.  Let $ \alpha $ be an automorphism of $ U $ of order $ 3 $ such that $ x^{\alpha}=y, y^{\alpha}=x^{-1}y^{-1} $. Let  $ V=\langle a, b \rangle $ be a copy of $ U $ and $ G = (U\times V)\rtimes \langle \alpha \rangle $. For any subgroup $ H $ of $ G $ of order $ 25 $, there exists a minimal normal subgroup $ K $ such that $ H\cap K = 1 $ (for details, see \cite[Example]{Guo-Xie-Li}). Then $ G $ has a  chief series $$ \varGamma_{G}: 1 =G_{0} <K= G_{1} < HK=G_{2} < G_{3}= G $$  such that for every $ G $-chief factor $ G_{i}/G_{i-1} $ $(1\leq i\leq 3) $ of $ \varGamma_{G} $, $ | G:\mathrm{N} _{G} (HG_{i-1}\cap G_{i})| $ is a $ 5 $-number. It means that $ H $ satisfies the partial $ \Pi $-property in $ G $, and of course, $ H $ also satisfies the partial $ \LL $-$ \Pi $-property in $ G $. However, $ G $ is not $ 5 $-supersoluble.
  \end{example}

  At last, we state the characterisation of a group $ G $ in which every subgroup  of order $ p^{2} $  satisfies the partial $ \LL $-$ \Pi $-property. Without loss
  of generality, we may assume that the Sylow $ p $-subgroups of $ G $ have order at least $ p^{2} $ and $ \mathrm{O}_{p'}(G) = 1 $. 

  \begin{customtheorem}{C}\label{three}
  	Let $ P $ be a Sylow $ p $-subgroup of a group $ G $. Assume that $ \mathrm{O}_{p'}(G)=1 $ and $ |P|\geq p^{2} $. If every subgroup of $ P $ of order $ p^{2} $ satisfies the partial $ \LL $-$ \Pi $-property in $ G $, then one of the following statements holds:
  	\begin{enumerate}[\rm(1)]
  		\item $ G $ is $ p $-supersoluble;
  		\item $ P $ is a minimal normal subgroup of $ G $ of order $ p^{2} $;
  		\item $ p=2 $ and $ G\cong \mathrm{Sym}(4) $;
  		\item $ p = 3 $, $  G = V\rtimes H $, where $ H\cong \mathrm{GL}(2,3) $ or $ \mathrm{SL}(2,3) $ acts naturally on $ V\cong \mathrm{C}_{3}\times \mathrm{C}_{3} $;
  		\item $ |P|\geq p^{4} $, $ P\unlhd G $, $ G/P $ is cyclic, and $ P $ is a homogeneous $ \mathbb{F}_{p}[G] $-module with all its irreducible $ \mathbb{F}_{p}[G] $-submodules having dimension $ 2 $.
  	\end{enumerate}
  \end{customtheorem}

\section{Preliminaries}

In this section, we give some lemmas that will be used in our proofs.

\begin{lemma}[{\cite[Lemma 2.1]{Chen-2013}}]\label{over}
	Let $ H $ be a subgroup of a group $ G $ and $ N\unlhd G $. If either $ N \leq H $ or $ \gcd(|H|, |N|)=1 $ and $ H $ satisfies the partial $ \Pi $-property
	in $ G $, then $ HN/N $ satisfies the partial $ \Pi $-property in $ G/N $.
\end{lemma}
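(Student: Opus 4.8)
The plan is to unwind the definition and transport a witnessing chief series from $G$ to $G/N$. Since $H$ satisfies the partial $\Pi$-property in $G$, fix a chief series $\varGamma_G : 1 = G_0 < G_1 < \cdots < G_n = G$ such that $|G/G_{i-1} : \mathrm{N}_{G/G_{i-1}}(HG_{i-1}/G_{i-1} \cap G_i/G_{i-1})|$ is a $\pi(HG_{i-1}/G_{i-1} \cap G_i/G_{i-1})$-number for every $i$. Projecting along the canonical map $G \to G/N$, I would consider the chain $G_0N/N \leq G_1N/N \leq \cdots \leq G_nN/N$ running from $1$ to $G/N$. Because each $G_i/G_{i-1}$ is $G$-simple, its image $G_iN/G_{i-1}N$ is either trivial (when $G_i \leq G_{i-1}N$) or $G$-isomorphic to $G_i/G_{i-1}$; deleting repetitions therefore yields a genuine chief series $\bar\varGamma$ of $G/N$. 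I would then check the index condition for $HN/N$ factor by factor along $\bar\varGamma$.

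Fix a nontrivial step $\bar{G}_{i-1} := G_{i-1}N/N < G_iN/N =: \bar{G}_i$ and write $A := HG_{i-1} \cap G_i = (H \cap G_i)G_{i-1}$ by Dedekind's modular law. The heart of the argument is the identification of the trace of $HN/N$ on this factor, i.e. the claim $HG_{i-1}N \cap G_iN = AN$. One inclusion is immediate since $A \leq HG_{i-1}$ and $A \leq G_i$. For the reverse, Dedekind's law gives $HG_{i-1}N \cap G_iN = (H \cap G_iN)G_{i-1}N$, so it suffices to show $H \cap G_iN \leq AN$, and this is where the two hypotheses enter. If $N \leq H$, then $H \cap G_iN = (H \cap G_i)N \leq AN$ directly. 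If instead $\gcd(|H|,|N|) = 1$, I set $M = G_iN$ and note $G_i, N \unlhd M$ with $M = G_iN$; for $D := H \cap M$ one has $D \cap N = 1$ and $D/(D \cap G_i) \cong DG_i/G_i \hookrightarrow M/G_i \cong N/(N \cap G_i)$, so $|D/(D\cap G_i)|$ divides both $|H|$ and $|N|$ and hence equals $1$, giving $H \cap G_iN = H \cap G_i \leq A$. In either case $HG_{i-1}N \cap G_iN = AN$, as claimed.

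With this in hand, the verification reduces to transporting the numerical condition through the $G$-epimorphism $\psi : G/G_{i-1} \twoheadrightarrow G/G_{i-1}N$. Since the factor is nontrivial we have $G_i \cap G_{i-1}N = G_{i-1}$, so $\psi$ restricts to an isomorphism of $A/G_{i-1}$ onto $AN/G_{i-1}N$; consequently the two $\pi$-sets coincide. Moreover $\psi$ carries the $G$-conjugacy orbit of $A/G_{i-1}$ onto that of $AN/G_{i-1}N$ by a $G$-equivariant surjection, so the number of conjugates of the image divides that of $A/G_{i-1}$; equivalently $|G/G_{i-1}N : \mathrm{N}_{G/G_{i-1}N}(AN/G_{i-1}N)|$ divides $|G/G_{i-1} : \mathrm{N}_{G/G_{i-1}}(A/G_{i-1})|$. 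The latter is a $\pi(A/G_{i-1})$-number by hypothesis, hence the former is a $\pi(AN/G_{i-1}N)$-number, which is exactly the condition required for the factor $\bar{G}_i/\bar{G}_{i-1}$ and the subgroup $HN/N$ in $G/N$. Ranging over all nontrivial steps of $\bar\varGamma$ would then complete the proof.

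I expect the main obstacle to be the trace identity $HG_{i-1}N \cap G_iN = AN$: without one of the two hypotheses the image of $H$ on the projected factor can be strictly larger than the image of the trace of $H$ on the original factor, which would break the transfer of the $\pi$-number condition. The coprimality argument isolating $D = H \cap G_iN$ and forcing $D \leq G_i$ via $\gcd(|H|,|N|)=1$ is the decisive point; the remaining steps are the modular-law bookkeeping and the standard observation that passing to a quotient can only decrease the number of conjugates, and hence only shrink the relevant normalizer index.
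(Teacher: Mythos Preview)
The paper does not supply its own proof of this lemma; it is quoted from Chen--Guo \cite{Chen-2013} and used as a black box. Your argument is correct and follows the natural route one would expect in the cited source: project the witnessing chief series along $G\to G/N$, delete repetitions to obtain a chief series of $G/N$, and verify the index condition factor by factor.

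The decisive step is exactly the one you isolate, namely the trace identity $HG_{i-1}N\cap G_iN = AN$ with $A=HG_{i-1}\cap G_i$. Your treatment of the two cases is clean: when $N\leq H$ the identity follows from Dedekind's law directly, and when $\gcd(|H|,|N|)=1$ your observation that $|D/(D\cap G_i)|$ divides both $|H|$ and $|N|$ forces $H\cap G_iN=H\cap G_i$. The remaining transfer of the $\pi$-number condition is routine once one notes that $A\cap G_{i-1}N=G_{i-1}$ (so $AN/G_{i-1}N\cong A/G_{i-1}$ as $G$-groups, matching the $\pi$-sets) and that $\mathrm{N}_G(A)\leq \mathrm{N}_G(AN)$ (so the relevant index can only shrink). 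Nothing is missing.
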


\begin{lemma}\label{passing}
	Let $ H $ be a subgroup of a group $ G $. Then $ H $ satisfies the partial $ \Pi $-property in $ G $  if and only if there exists a chief series $ \varOmega_{G} $ of $ G $ passing through $ H_{G} $ and $ H^{G} $ such that  $ | G / H_{i-1} : \mathrm{N} _{G/H_{i-1}} (HH_{i-1}/H_{i-1}\cap H_{i}/H_{i-1})| $ is a $ \pi (HH_{i-1}/H_{i-1}\cap H_{i}/H_{i-1}) $-number for each $ G $-chief factor $ H_{i}/H_{i-1} $ of $ \varOmega_{G} $ with $ H_{G}\leq H_{i-1}\leq H_{i}\leq H^{G} $.
\end{lemma}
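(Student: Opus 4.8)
The plan is to prove the two implications separately, the "only if" direction being the substantive one. For the "if" direction, suppose such a chief series $\varOmega_G$ exists; one simply refines $\varOmega_G$ to a full chief series of $G$ — this is possible because $\varOmega_G$ is already a chief series of $G$, so nothing needs to be done, and the defining condition of the partial $\Pi$-property is witnessed by $\varOmega_G$ itself once we observe that for $G$-chief factors $H_i/H_{i-1}$ not lying between $H_G$ and $H^G$, the intersection $HH_{i-1}/H_{i-1}\cap H_i/H_{i-1}$ is either trivial or all of $H_i/H_{i-1}$. Indeed, if $H_i \le H_G$ then $H \ge H_i$ so the intersection is everything and $\mathrm{N}_{G/H_{i-1}}$ of it is the whole quotient, giving index $1$; if $H_{i-1}\ge H^G$ then $HH_{i-1}=H_{i-1}$... wait, more carefully: if $H^G\le H_{i-1}$ then $H\le H_{i-1}$, so $HH_{i-1}/H_{i-1}$ is trivial and again the index is $1$, which is vacuously a $\pi(\cdot)$-number. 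So the condition on the "outside" chief factors is automatic, and $\varOmega_G$ witnesses the partial $\Pi$-property directly.

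For the "only if" direction, assume $H$ satisfies the partial $\Pi$-property in $G$, witnessed by some chief series $\varGamma_G: 1=G_0<\cdots<G_n=G$. The goal is to produce a chief series passing through both $H_G$ and $H^G$ with the analogous property. The key tool is that the partial $\Pi$-property is invariant under the choice of chief series in a suitable sense — concretely, I would first establish (or invoke, if it is the content of \cite[Lemma 2.1 or similar]{Chen-2013}) that if $H$ satisfies the partial $\Pi$-property via one chief series, the numerical condition transfers to any chief factor appearing in any other chief series, via the isomorphism theorem and the fact that between two chief series one can pass by a sequence of elementary steps (Jordan–Hölder / Schreier refinement), at each step replacing $G_{i-1}<G_i$ locally. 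The point is that for a fixed normal section $M/L$ of $G$ which happens to be a chief factor, the quantity $|G/L : \mathrm{N}_{G/L}(HL/L\cap M/L)|$ being a $\pi$-number depends only on the section, not on the surrounding series. Then I would choose a chief series of $G$ that refines the normal series $1 \trianglelefteq H_G \trianglelefteq H^G \trianglelefteq G$ (possible since all three terms are normal in $G$), and verify the required condition on its chief factors lying between $H_G$ and $H^G$ using the transfer just described.

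The main obstacle — and the step deserving the most care — is precisely this transfer of the numerical condition between different chief series. One must argue that "$|G/L:\mathrm{N}_{G/L}(HL/L \cap M/L)|$ is a $\pi(HL/L\cap M/L)$-number" is well-defined on $G$-isomorphism classes of chief factors together with their embedding data, and is preserved under the Schreier refinement steps connecting $\varGamma_G$ to the desired series. A clean way to handle this is: given two $G$-chief factors that are $G$-isomorphic (as $G$-groups, via conjugation action), say $M_1/L_1$ and $M_2/L_2$, show $HL_1/L_1 \cap M_1/L_1$ and $HL_2/L_2 \cap M_2/L_2$ correspond under the $G$-isomorphism — this requires knowing $H L_i$ meets the chief factor "in the same place", which is true when the two factors arise as consecutive quotients in refinements of a common normal series through $H_G$ and $H^G$, because $H \cap M_i$ controls the intersection and $H\cap H^G = H$, $H \cap H_G = H_G$ pin down the behaviour at the endpoints. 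I would also note the degenerate cases ($H$ normal in $G$, so $H_G = H = H^G$ and the series condition is vacuous on a zero-length segment) are immediate. With the transfer lemma in hand, the construction of the desired chief series and the verification are routine.
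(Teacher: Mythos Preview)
Your treatment of the ``if'' direction is correct and matches what the paper leaves implicit.

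The ``only if'' direction, however, has a genuine gap. Your proposed mechanism --- transferring the numerical condition across $G$-isomorphic chief factors via Schreier\,/\,Jordan--H\"older moves --- does not work in general, because the quantity in question depends on the embedding of the chief factor, not merely its $G$-isomorphism class. Concretely, for an elementary swap replacing $A < B < C$ by $A < B' < C$ (with $B \cap B' = A$ and $BB' = C$), the natural $G$-isomorphism $B/A \to C/B'$ carries $(HA\cap B)/A$ to $(H\cap B)B'/B'$, whereas the relevant subgroup on the other side is $(HB'\cap C)/B' = (H\cap C)B'/B'$; these coincide only when $H\cap C = (H\cap B)(H\cap B')$, which already fails for three distinct order-$p$ subgroups $B$, $B'$, $H$ of $C_p\times C_p$ over $A=1$. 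Your hedge that the transfer holds ``when the two factors arise as consecutive quotients in refinements of a common normal series through $H_G$ and $H^G$'' does not isolate a usable sufficient condition and is not argued.

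The paper avoids this by constructing a \emph{specific} chief series rather than attempting a general transfer. For $H_G=1$ it sets $H_i := G_i\cap H^{G}$; using Dedekind with $H\le H^{G}$ one gets $HH_{i-1}\cap H_i = (HG_{i-1}\cap G_i)\cap H^{G}$, whence the one-sided containment $\mathrm{N}_G(HG_{i-1}\cap G_i) \le \mathrm{N}_G(HH_{i-1}\cap H_i)$, together with $(HH_{i-1}\cap H_i)/H_{i-1} \cong (H\cap G_i)/(H\cap G_{i-1}) \cong (HG_{i-1}\cap G_i)/G_{i-1}$, so the $\pi$-set is unchanged. The case $H_G > 1$ is handled by induction on $|G|$, using Lemma~\ref{over} to pass to $G/H_G$. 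The key is thus a normalizer \emph{inequality} under intersection with the fixed normal subgroup $H^{G}$, not a symmetric transfer between arbitrary chief series.
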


\begin{proof}[\bf{Proof}]
	We only need to prove the necessity. Suppose that $ H $ satisfies the partial $ \Pi $-property in $ G $. It is no loss to assume that $ H\ntrianglelefteq G $.  Then $ H_{G}<H<H^{G} $. Assume that $ H_{G} > 1 $. By a routine check, we know that $ (H/H_{G})^{G/H_{G}}=H^{G}/H_{G} $ and $ (H/H_{G})_{G/H_{G}}=H_{G}/H_{G} $. Write $ \overline{G}=G/H_{G} $. In view of Lemma \ref{over}, $ H/H_{G} $ satisfies the partial $ \Pi $-property in $ G/H_{G} $. By induction, there exists a $ G/H_{G} $-chief series $$ G/H_{G}>\cdots >H^{G}/H_{G}:=H_{k}/H_{G}>\cdots >H_{i}/H_{G}>H_{i-1}/H_{G}>\cdots >H_{G}/H_{G}:=H_{0}/H_{G} $$ passing through $ H^{G}/H_{G} $ and $ H_{G}/H_{G} $ such that $ |\overline{G}/\overline{H_{i-1}}:\mathrm{N}_{\overline{G}/\overline{H_{i-1}}}(\overline{H}\overline{H_{i-1}}/\overline{H_{i-1}}\cap \overline{H_{i}}/\overline{H_{i-1}})| $ is a $ \pi(\overline{H}\overline{H_{i-1}}/\overline{H_{i-1}}\cap \overline{H_{i}}/\overline{H_{i-1}}) $-number for each
	$ \overline{G}/\overline{H_{i-1}} $-chief factor $ \overline{H_{i}}/\overline{H_{i-1}} $  with $ i=0, ..., k $. Then we can get the following
	$ G $-chief series
	
	$$  G>\cdots >H^{G}=H_{k}>\cdots >H_{i}>H_{i-1}>\cdots>H_{G}=H_{0}>\cdots >1. $$
	Therefore, $ | G / H_{i-1} : \mathrm{N} _{G/H_{i-1}} (HH_{i-1}/H_{i-1}\cap H_{i}/H_{i-1})| $ is a $ \pi (HH_{i-1}/H_{i-1}\cap H_{i}/H_{i-1}) $-number for each $ G $-chief factor $ H_{i}/H_{i-1} $  with $ 1\leq i\leq k $, as desired.

Now assume that $ H_{G}=1 $. There exists a chief series $ \varGamma_{G}: 1 =G_{0} < G_{1} < \cdot\cdot\cdot < G_{n}= G $ of $ G $ such that for every $ G $-chief factor $ G_{i}/G_{i-1} $ $ (1\leq i\leq n) $ of $ \varGamma_{G} $, $ | G / G_{i-1} : \mathrm{N} _{G/G_{i-1}} (HG_{i-1}/G_{i-1}\cap G_{i}/G_{i-1})| $ is a $ \pi (HG_{i-1}/G_{i-1}\cap G_{i}/G_{i-1}) $-number.  Note that $$ \varGamma_{G}\cap H^{G}: 1=G_{0}\cap H^{G}<G_{1}\cap H^{G}<\cdots <G_{n}\cap H^{G}=H^{G}  $$ is, avoiding repetitions, part of a chief
	series of $ G $. Set $ G_{i} \cap H^{G} = H_{i} $, where $ i=0, 1, ..., n $. We can complete $ \varGamma_{G} \cap H^{G} $ to obtain a chief series $ \varOmega_{G} $ of $ G $. Then $ G $ has a chief series $$ \varOmega_{G}: 1=H_{0}<H_{1}<\cdots <H_{n}=H^{G}<\cdots <G $$ passing through $ H^{G} $.
	Observe that $ \mathrm{N}_{G}(HG_{i-1}\cap G_{i})\leq \mathrm{N}_{G}(HH_{i-1} \cap H_{i}) $ and $ (HG_{i-1}\cap G_{i})/G_{i-1}\cong (H\cap G_{i})/(H\cap G_{i-1})\cong (HH_{i-1}\cap H_{i})/H_{i-1} $. Hence $ | G / H_{i-1} : \mathrm{N} _{G/H_{i-1}} (HH_{i-1}/H_{i-1}\cap H_{i}/H_{i-1})| $ is a $ \pi (HH_{i-1}/H_{i-1}\cap H_{i}/H_{i-1}) $-number for any $i=1, ..., n$, as desired.
Our proof is now complete.
\end{proof}

\begin{lemma}\label{OV}
	Let $ H $ be a subgroup of a group $ G $ and $ N \unlhd G $. We have:
	
	\begin{enumerate}[\rm(1)]
		\item If $ H $ satisfies the partial $ \LL $-$ \Pi $-property in $ G $, then $ HN $ satisfies the partial $ \LL $-$ \Pi $-property in $ G $;
		\item If $ H $ satisfies the partial $ \LL $-$ \Pi $-property in $ G $, then $ HN/N $ satisfies the partial $ \LL $-$ \Pi $-property in $ G/N $.
	\end{enumerate}

\end{lemma}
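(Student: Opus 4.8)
The plan is to prove (1) directly from the definition and then deduce (2) from (1) via the correspondence theorem. For (1), first note that if $HN\unlhd G$ (in particular if $H\unlhd G$) there is nothing to prove, so we may assume $H\ntrianglelefteq G$ and use the partial $\LL$-$\Pi$-property of $H$ in its non-trivial form. The structural facts I would record at the outset are $(HN)^{G}=H^{G}N$, $N\le (HN)_{G}$, and $H_{G}\le (HN)_{G}$; the first holds because $H^{G}N\unlhd G$ contains $HN$ and any normal subgroup of $G$ containing $HN$ contains both $H^{G}$ and $N$.

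Now fix a maximal $G$-invariant subgroup $K$ of $(HN)^{G}=H^{G}N$ with $(HN)_{G}\le K$; since $N\le (HN)_{G}\le K$ we have $(HN)K/K=HK/K$, and the goal is to show $|G/K:\mathrm{N}_{G/K}(HK/K)|$ is a $\pi(HK/K)$-number. If $H^{G}\le K$ this is trivial, as then $HK=K$. Otherwise $K<H^{G}K\le H^{G}N$, so maximality forces $H^{G}K=H^{G}N$; hence $H^{G}N/K=H^{G}K/K\cong H^{G}/(H^{G}\cap K)$ is a $G$-chief factor, and $K_{1}:=H^{G}\cap K$ is a maximal $G$-invariant subgroup of $H^{G}$ with $H_{G}\le K_{1}$ (because $H_{G}\le H^{G}$ and $H_{G}\le (HN)_{G}\le K$). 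Applying the partial $\LL$-$\Pi$-property of $H$ to $K_{1}$ gives that $|G/K_{1}:\mathrm{N}_{G/K_{1}}(HK_{1}/K_{1})|$ is a $\pi(HK_{1}/K_{1})$-number. Then, using $H\le H^{G}$ and Dedekind's modular law, one checks that $H\cap K_{1}=H\cap K$, so $HK_{1}/K_{1}\cong H/(H\cap K)\cong HK/K$ and the two sets of prime divisors agree, and also that $\mathrm{N}_{G}(HK_{1})=\mathrm{N}_{G}(HK)$ (the inclusion $\subseteq$ is immediate from $K_{1}\le K$, and $\supseteq$ comes from intersecting the relation $H^{g}K=HK$ with $H^{G}$). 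Since this common normalizer contains both $K_{1}$ and $K$, the two indices in $G/K_{1}$ and $G/K$ both equal $|G:\mathrm{N}_{G}(HK)|$, and (1) follows.

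For (2), part (1) shows that $HN$ satisfies the partial $\LL$-$\Pi$-property in $G$, so it suffices to prove the general statement: if $L\le G$ with $N\le L$ satisfies the partial $\LL$-$\Pi$-property in $G$, then $L/N$ satisfies it in $G/N$. This is routine: $(L/N)^{G/N}=L^{G}/N$ and $(L/N)_{G/N}=L_{G}/N$, so the maximal $(G/N)$-invariant subgroups of $L^{G}/N$ lying above $L_{G}/N$ are exactly the $K/N$ with $K$ a maximal $G$-invariant subgroup of $L^{G}$ satisfying $N\le K$ and $L_{G}\le K$, and the third isomorphism theorem identifies $G/K$ with $(G/N)/(K/N)$, $HK/K$ (here $LK/K$) with the corresponding subgroup, and the normalizer index and the relevant set of primes accordingly.

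I expect the only real obstacle to be the bookkeeping in the non-trivial case of (1): correctly matching the given chief factor of $(HN)^{G}$ (relative to $HN$) with the right chief factor of $H^{G}$ (relative to $H$), and verifying that passing between the quotients $G/K_{1}$ and $G/K$ preserves both $\pi(HK/K)$ and the normalizer index. Everything else is formal manipulation of the operators $X\mapsto X^{G}$ and $X\mapsto X_{G}$ and the correspondence theorem.
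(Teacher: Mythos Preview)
Your proposal is correct and follows essentially the same route as the paper's proof: for (1) you pass from a maximal $G$-invariant $K\le (HN)^{G}=H^{G}N$ to $K_{1}=H^{G}\cap K$ and verify $\mathrm{N}_{G}(HK_{1})=\mathrm{N}_{G}(HK)$ via Dedekind's law (the paper does the same with $M$ in place of $K$), and for (2) you reduce to the case $N\le H$ using (1) and then invoke the correspondence theorem, exactly as the paper does. The only cosmetic difference is that you explicitly treat the case $H^{G}\le K$, which in fact cannot occur since $N\le K$ would then force $H^{G}N\le K$.
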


\begin{proof}[\bf{Proof}]
	
	
	(1) If $ HN\unlhd G $, there is nothing to prove. Hence we may assume that $ HN\ntrianglelefteq G $. Therefore $ H\ntrianglelefteq G $.  Clearly, $ (HN)^{G}=H^{G}N $ and $  H_{G}N\leq  (HN)_{G} $. Let $ M $ be a maximal $ G $-invariant subgroup  of $ (HN)^{G} $ with $ (HN)_{G}\leq M $.  Note that $ H_{G}\leq H^{G}\cap M $ and $ (HN)^{G}/M=H^{G}N/M=H^{G}M/M\cong H^{G}/(H^{G}\cap M) $ is a chief factor of $ G $. Since $ H $ satisfies the partial $ \LL $-$ \Pi $-property in $ G $, we have that $ | G : \mathrm{N}_{G} (H(H^{G}\cap M))| $ is a $ \pi (H(H^{G}\cap M)/(H^{G}\cap M)) $-number. Now $ \mathrm{N}_{G}(HM)=\mathrm{N}_{G}((H^{G}\cap HM)M)\geq \mathrm{N}_{G}(H^{G}\cap HM) $, we see that $ |\mathrm{N}_{G}(HM)|=|\mathrm{N}_{G}(H(H^{G}\cap M))| $. It follows that $ |G:\mathrm{N}_{G}(HM)| $ is a $ \pi (HM/M) $-number. This meas that $ HN $ satisfies the partial $ \LL $-$ \Pi $-property in $ G $, as wanted.
	
	(2) We  may assume that $ N\leq H $ by (1). Then $ H_{G}\geq N $. It is no loss to assume that $ H\ntrianglelefteq G $. Let $ L/N $ be a maximal $ G/N $-invariant subgroup of $ (H/N)^{G/N}=H^{G}/N $ with $  (H/N)_{G/N}=H_{G}/N\leq L/N $. Clearly,  $ H^{G} /L $ is a chief factor of $ G $ and $  H_{G}\leq L $. Since $ H $ satisfies the partial $ \LL $-$ \Pi $-property in $ G $, we have that  $ |G/L :\mathrm{N}_{G/L}(HL/L)| $ is a $ \pi (HL/L) $-number. Write $ \overline{G}=G/N $.  Hence $ |\overline{G}/\overline{L} :\mathrm{N}_{\overline{G}/\overline{L}} (\overline{HL}/\overline{L})| $ is a $ \pi (\overline{HL}/\overline{L}) $-number.  This means that $ \overline{H} $ satisfies the partial $ \LL $-$ \Pi $-property in $\overline{G}$.
\end{proof}

If $ P $ is either an odd order $ p $-group or a quaternion-free $ 2 $-group, then we use $ \Omega(P) $ to denote the subgroup $ \Omega_{1} (P) $.  Otherwise, $ \Omega (P) = \Omega_{2} (P) $.

\begin{lemma}\label{hypercenter}
	Let $ P $ be a normal $ p $-subgroup of a group $ G $ and $ D $ a Thompson critical subgroup of $ P $ (see \cite[page 186]{Gorenstein-1980}). If $ P/\Phi(P) \leq \mathrm{Z}_{\UU}(G/\Phi(P)) $ or  $ \Omega(D) \leq \mathrm{Z}_{\UU}(G) $, then $ P \leq  \mathrm{Z}_{\UU}(G) $.
\end{lemma}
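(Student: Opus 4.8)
The plan is to reduce the general statement to one of the two hypotheses by a standard Thompson-critical-subgroup argument, and then in each case to push a $G$-chief series of $P$ up into the $\UU$-hypercentre one factor at a time. First recall that a Thompson critical subgroup $D$ of $P$ is characteristic in $P$ (hence normal in $G$), satisfies $\Phi(D)\le\Phi(P)$, $[P,D]\le D$, $\mathrm{C}_P(D)\le D$, and — crucially — has the property that if $\Omega(D)$ lies in $\mathrm{Z}_{\UU}(G)$ then so does $D$, and $P/D$ is elementary abelian of the same exponent-controlling type. The first reduction is: it suffices to prove the implication ``$\Omega(D)\le\mathrm{Z}_{\UU}(G)\Rightarrow P\le\mathrm{Z}_{\UU}(G)$'', because if instead $P/\Phi(P)\le\mathrm{Z}_{\UU}(G/\Phi(P))$, then since $\Phi(P)\le\Phi(G)$ (as $P\unlhd G$) and $\mathrm{Z}_{\UU}$ behaves well modulo the Frattini subgroup — $G$-chief factors of $P$ above $\Phi(P)$ are $\UU$-central iff the corresponding factors of $P/\Phi(P)$ are, and the bottom factors inside $\Phi(P)$ are $\UU$-central iff $P/\Phi(P)\le\mathrm{Z}_\UU(G/\Phi(P))$ forces $\Phi(P)\le\mathrm Z_\UU(G)$ by coprime-free Frattini arguments — one gets $P\le\mathrm Z_\UU(G)$ directly. (Alternatively both hypotheses reduce to a common one via $\Omega(D)\Phi(P)/\Phi(P)$ generating $P/\Phi(P)$.)

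Next I would handle the core case $\Omega(D)\le\mathrm{Z}_{\UU}(G)$. The idea: $\Omega(D)$ contains ``enough'' of $P$ to control it. Work by induction on $|P|$. If $\Omega(D)=1$ then $D=1$ (a nontrivial $p$-group has nontrivial $\Omega$), so $P=\mathrm C_P(D)\le D=1$ and there is nothing to prove; more usefully, if $P$ has exponent $p$ (or is quaternion-free of exponent dividing $4$) then $\Omega(D)=D\ge\mathrm C_P(D)$ forces $D=P$, so $P=\Omega(D)\le\mathrm Z_\UU(G)$ and we are done. In general pick a minimal normal subgroup $N$ of $G$ with $N\le P$; since $N\le\Omega(D)\cap Z(P)$ (minimal normal subgroups inside $P$ are elementary abelian and central in $P$... actually central in $O_p$, and they lie in $\Omega_1(Z(P))\le\Omega(D)$), we have $N\le\mathrm Z_\UU(G)$, so $|N|=p$ and $N$ is $\UU$-central. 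Pass to $\bar G=G/N$: a Thompson critical subgroup $\bar D$ of $\bar P=P/N$ satisfies $\bar D\ge DN/N$ and $\Omega(\bar D)\le\mathrm Z_\UU(\bar G)$ (using $\Omega(D)N/N\le\mathrm Z_\UU(G)N/N\le\mathrm Z_\UU(\bar G)$ plus the fact that $\mathrm Z_\UU(G)N/N\le\mathrm Z_\UU(G/N)$, and that $\Omega$ of a critical subgroup can only grow in the quotient). By induction $P/N\le\mathrm Z_\UU(\bar G)$, i.e. every $G$-chief factor of $P$ above $N$ is $\UU$-central; combined with $N$ itself being $\UU$-central, every $G$-chief factor of $P$ is $\UU$-central, so $P\le\mathrm Z_\UU(G)$.

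The step I expect to be the main obstacle is verifying cleanly that the hypothesis is inherited by the quotient $G/N$ — specifically that $\Omega(\bar D)\le\mathrm Z_\UU(G/N)$ follows from $\Omega(D)\le\mathrm Z_\UU(G)$. The subtlety is that a Thompson critical subgroup of $P/N$ need not be the image of one of $P$, and $\Omega_2$ does not commute with quotients in the quaternion case; one must argue that $\langle\Omega(D)N/N\rangle$ together with the extra elements of order $\le 4$ appearing in $\bar P$ is still $\UU$-central, which requires knowing that those extra elements come from elements of $P$ of order dividing $4p$ lying over central chief factors — this is where one invokes the structure of $D$ (that $P/D$ has exponent $p$, so no genuinely new small-order elements are created modulo $N\le D$) to keep $\Omega(\bar D)\le\Omega(D)N/N\cdot(\text{something }\UU\text{-central})$. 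Once that inheritance is nailed down, the induction closes routinely. I would also remark that the second hypothesis of the lemma is exactly what the minimal-subgroup results (Theorems A and \ref{minimal}) produce, which is why this lemma is the right bridge.
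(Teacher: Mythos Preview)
The paper does not actually prove this lemma; it simply cites \cite[Lemma 2.8]{Chen-xiaoyu-2016}. Your attempted direct proof has a genuine gap at precisely the point you yourself flag as the main obstacle: after passing to $\overline{G}=G/N$, you cannot verify that $\Omega(\overline{D})\le \mathrm{Z}_{\UU}(\overline{G})$ for a Thompson critical subgroup $\overline{D}$ of $P/N$. A critical subgroup of $P/N$ need not be the image of one of $P$, and your justification that ``$P/D$ has exponent $p$, so no genuinely new small-order elements are created'' is simply false --- nothing in the definition of a critical subgroup bounds the exponent of $P/D$ --- so the inheritance step does not go through and the induction does not close. Your reduction of the $P/\Phi(P)$ hypothesis to the $\Omega(D)$ hypothesis is also unjustified: ``coprime-free Frattini arguments'' is not an argument, and $\Omega(D)\Phi(P)/\Phi(P)$ need not generate $P/\Phi(P)$.

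The argument that actually works --- and is presumably what the cited reference does --- avoids induction entirely. The defining property of a Thompson critical subgroup (see \cite[Chapter 5, Theorems 3.11--3.13]{Gorenstein-1980}) is that every $p'$-automorphism of $P$ restricts faithfully to $\Omega(D)$; Burnside's basic lemma gives the analogous statement for $P/\Phi(P)$. In either case $C_{G}(X)/C_{G}(P)$ is a $p$-group, where $X$ denotes $\Omega(D)$ or the $G$-set $P/\Phi(P)$. Now for any $G$-chief factor $U/V$ inside $P$ one has $\mathrm{O}_{p}(G/C_{G}(U/V))=1$; since $C_{G}(P)\le C_{G}(U/V)$, the image of $C_{G}(X)$ in $G/C_{G}(U/V)$ is a normal $p$-subgroup, hence trivial, so $C_{G}(X)\le C_{G}(U/V)$. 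Thus $G/C_{G}(U/V)$ is a quotient of $G/C_{G}(X)$, and the latter is supersoluble because by hypothesis it acts on $X$ with all chief factors of order $p$ (so it embeds in a group of upper-triangular matrices over $\mathbb{F}_{p}$). An irreducible $\mathbb{F}_{p}$-module for a supersoluble group has order $p$, whence $|U/V|=p$ and $P\le \mathrm{Z}_{\UU}(G)$.
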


\begin{proof}[\bf{Proof}]
	 By \cite[Lemma 2.8]{Chen-xiaoyu-2016}, the conclusion follows.
\end{proof}


\begin{lemma}[{\cite[Lemma 2.10]{Chen-xiaoyu-2016}}]\label{critical}
	Let $ D $ be a Thompson critical subgroup of a nontrivial $ p $-group $ P $.
	
	\begin{enumerate}[\rm(1)]
		\item If $ p > 2 $, then the exponent of $ \Omega_{1}(D) $ is $ p $.
		\item If $ P $ is an abelian $ 2 $-group, then the exponent of $ \Omega_{1}(D) $ is $ 2 $.
		\item If $ p = 2 $, then the exponent of $ \Omega_{2}(D) $ is at most $ 4 $.
	\end{enumerate}

\end{lemma}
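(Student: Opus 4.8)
The plan is to extract from the definition of a Thompson critical subgroup the two structural facts that drive all three parts, and then to run a single commutator computation in each case. Recall that a Thompson critical subgroup $D$ of $P$ is characteristic in $P$ and satisfies $\Phi(D)\leq \mathrm{Z}(D)$, $[P,D]\leq \mathrm{Z}(D)$ and $C_{P}(D)=\mathrm{Z}(D)$. From $\Phi(D)=D^{p}D'\leq \mathrm{Z}(D)$ I read off the only two properties I will use: first, $D'\leq \mathrm{Z}(D)$, so $D$ has nilpotency class at most $2$ and the commutator map is bilinear, i.e.\ $[x^{n},y]=[x,y]^{n}=[x,y^{n}]$ for all $x,y\in D$ and all $n$; second, $x^{p}\in \mathrm{Z}(D)$ for every $x\in D$, equivalently $D/\mathrm{Z}(D)$ is elementary abelian. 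The engine of the proof is the class-two power formula $(xy)^{n}=x^{n}y^{n}[y,x]^{\binom{n}{2}}$, valid in any group of nilpotency class at most $2$.

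For part (1), assume $p>2$ and take $x,y\in D$ with $x^{p}=y^{p}=1$. Applying the power formula with $n=p$ gives $(xy)^{p}=[y,x]^{\binom{p}{2}}$, and since $p$ is odd we have $p\mid\binom{p}{2}$, so $(xy)^{p}=\bigl([y,x]^{p}\bigr)^{(p-1)/2}$. By bilinearity $[y,x]^{p}=[y^{p},x]=1$, whence $(xy)^{p}=1$. Thus the set $\{x\in D:x^{p}=1\}$ is closed under products (and trivially under inverses), so it is a subgroup and therefore equals $\Omega_{1}(D)$; being a set of elements of order dividing $p$, it has exponent $p$ (exactly $p$ because $D\neq 1$). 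Part (2) is the degenerate case of the same computation: if $P$ is abelian then $D$ is abelian, every $[y,x]=1$, and $\Omega_{1}(D)=\{x\in D:x^{2}=1\}$ is elementary abelian of exponent $2$.

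Part (3) is the real obstacle, because a $2$-group of class exactly $2$ need not be regular, so one cannot simply quote that $\{x:x^{4}=1\}$ is a subgroup; here I use the second structural fact in an essential way. Take $x,y\in D$ with $x^{4}=y^{4}=1$. The $n=2$ instance of the power formula gives $(xy)^{2}=x^{2}y^{2}[y,x]$; since $x^{2},y^{2}\in \mathrm{Z}(D)$ and $[y,x]\in D'\leq \mathrm{Z}(D)$, the element $(xy)^{2}$ is central. Squaring the three commuting central factors yields $(xy)^{4}=x^{4}y^{4}[y,x]^{2}=[y,x]^{2}$, and bilinearity gives $[y,x]^{2}=[y^{2},x]=1$ because $y^{2}\in \mathrm{Z}(D)$. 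Hence $(xy)^{4}=1$, so $\{x\in D:x^{4}=1\}$ is again a subgroup, equal to $\Omega_{2}(D)$, and its exponent is at most $4$.

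The only point requiring care is confirming that the defining properties of a critical subgroup genuinely force $\Phi(D)\leq \mathrm{Z}(D)$ and class at most $2$; this is exactly Thompson's construction as recorded in \cite[page 186]{Gorenstein-1980}, after which all three statements reduce to the short computations above. I would emphasise that no appeal to the theory of regular $p$-groups is needed, which is precisely what lets the $p=2$ case in part (3) go through by the same bilinear-commutator method as the odd case.
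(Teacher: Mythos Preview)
The paper does not supply its own proof of this lemma; it simply quotes the statement from \cite[Lemma 2.10]{Chen-xiaoyu-2016} and moves on. Your proposal therefore goes beyond what the paper does, and it is correct. The two structural facts you isolate---that $D'\leq\Phi(D)\leq \mathrm{Z}(D)$ forces class at most $2$ and hence bilinear commutators, and that $D^{p}\leq\Phi(D)\leq \mathrm{Z}(D)$ forces $D/\mathrm{Z}(D)$ to have exponent $p$---are exactly the content of Thompson's construction as recorded in \cite[Theorem 5.3.11]{Gorenstein-1980}, and the class-two power formula $(xy)^{n}=x^{n}y^{n}[y,x]^{\binom{n}{2}}$ then disposes of all three parts as you describe. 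In particular, your treatment of part (3) is the right one: the crucial step $[y,x]^{2}=[y^{2},x]=1$ uses $y^{2}\in \mathrm{Z}(D)$, which is precisely where the hypothesis $\Phi(D)\leq \mathrm{Z}(D)$ enters for $p=2$, and this is what lets you avoid any appeal to regularity. So there is nothing to compare against in the paper itself, but your argument is a clean, self-contained proof of the cited lemma.
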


\begin{lemma}[{\cite[Lemma 3.1]{Ward}}]\label{charcteristic}
	Let $ P $ be a nonabelian quaternion-free $ 2 $-group. Then $ P $ has a characteristic subgroup of index $ 2 $.
\end{lemma}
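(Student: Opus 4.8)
The plan is to translate the statement into a question about the natural action of $\operatorname{Aut}(P)$ on the Frattini quotient. Writing $V = P/\Phi(P)$, an $\mathbb{F}_{2}$-space of dimension $d = d(P)$, the subgroups of index $2$ in $P$ are exactly the preimages of the hyperplanes of $V$, and such a subgroup is characteristic precisely when the corresponding hyperplane is invariant under the image $A$ of $\operatorname{Aut}(P)$ in $\mathrm{GL}(V)$. Thus it suffices to produce a nonzero $A$-fixed functional on $V$, equivalently to show that the coinvariant space $V/[V,A]$ is nonzero. Since $P$ is nonabelian it is noncyclic, so $d \geq 2$. The instructive obstruction is $Q_{8}$ itself: there $\operatorname{Aut}(Q_{8})$ induces the full group $\mathrm{GL}(2,2)$ on $V$, permuting the three index-$2$ subgroups transitively, so no fixed functional exists. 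The whole point of the quaternion-free hypothesis is to rule out exactly this phenomenon.

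The structure used to constrain $A$ comes from the squaring and commutator maps, which are $\operatorname{Aut}(P)$-equivariant. Commutation induces an $A$-invariant alternating form $B$ on $V$ with values in a characteristic section of $\Phi(P)$, and squaring refines it to a quadratic map $Q$ whose polarization is $B$; here $P$ nonabelian forces $B \neq 0$ and $P$ of exponent $>2$ forces $Q \neq 0$. A subgroup isomorphic to $Q_{8}$ corresponds precisely to an \emph{anisotropic} (elliptic, ``minus type'') $2$-dimensional piece of $Q$, on which the orthogonal group acts transitively with no fixed vector; this is exactly why the computation for $Q_{8}$ produces $\mathrm{GL}(2,2)$. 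Conversely, quaternion-freeness forbids such anisotropic planes, so $Q$ must admit nonzero singular vectors, and the radical of $B$ together with the singular set of $Q$ forms an $A$-invariant configuration from which I would extract the required fixed functional. Quaternion-freeness is also what makes the relevant $\Omega$- and Thompson-critical-subgroup machinery behave (cf. Lemma~\ref{critical} and the convention $\Omega(P)=\Omega_{1}(P)$ adopted for quaternion-free $P$), since the exponent-$4$ pathologies that would otherwise appear are exactly those created by quaternion sections.

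To organize the argument I would first dispose of the small case by Taussky's classical theorem: if $|P:P'| = 4$ then $P$ is of maximal class, and once the non-quaternion-free semidihedral and generalized quaternion types are excluded this leaves only the dihedral groups, whose unique cyclic maximal subgroup is characteristic of index $2$. In the remaining case $|P:P'| \geq 8$ I would run the form argument above, reducing modulo a suitable characteristic subgroup of $\Phi(P)$ so as to bring the values of $Q$ and $B$ into a one- or two-dimensional $\mathbb{F}_{2}$-space where the equivalence ``anisotropic plane $\Leftrightarrow$ quaternion subgroup'' can be read off cleanly, and then locate an $A$-invariant hyperplane through the singular set.

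The main obstacle I anticipate is making the invariant quadratic form genuinely $\mathbb{F}_{2}$-valued in full generality: when $\Phi(P)$ is neither elementary abelian nor central the squares $x^{2}$ do not land in a canonical copy of $\mathbb{F}_{2}$, so the clean orthogonal geometry of the extraspecial case must be recovered only after passing to characteristic sections, and one has to verify that no $Q_{8}$ is reintroduced along the way. Relatedly, quaternion-freeness is \emph{not} inherited by arbitrary quotients, so the reduction steps cannot merely quotient out and invoke induction; keeping the hypothesis usable while cutting $\Phi(P)$ down to a manageable size is where I expect the real work to lie.
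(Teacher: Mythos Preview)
The paper does not supply a proof of this lemma; it is cited directly from Ward~\cite{Ward}, so there is no in-paper argument to compare your approach against.

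Your reduction to finding an $\operatorname{Aut}(P)$-invariant hyperplane of $V=P/\Phi(P)$, the dual reformulation via coinvariants, and the Taussky disposal of the maximal-class case are all correct. The genuine gap is in the main step. The implication ``anisotropic $2$-plane for $Q$ $\Rightarrow$ $Q_{8}\le P$'' that drives your argument is only valid when squaring takes values in a central subgroup of order~$2$ --- essentially the extraspecial situation --- and to reach that situation you must pass to a characteristic quotient $P/N$. But then an anisotropic plane yields only $Q_{8}\le P/N$, which does \emph{not} contradict quaternion-freeness of $P$. A concrete obstruction: take $P=\langle i,j\mid i^{4}=j^{4}=1,\ jij^{-1}=i^{-1}\rangle\cong C_{4}\rtimes C_{4}$. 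Every maximal subgroup of $P$ is isomorphic to $C_{4}\times C_{2}$, so $P$ is quaternion-free; yet $i^{2}j^{2}$ is the unique non-square central involution, so $N=\langle i^{2}j^{2}\rangle$ is characteristic, and $P/N\cong Q_{8}$ gives an anisotropic form on $V$. Your scheme, reading this section, would try to produce $Q_{8}\le P$ and fail. You do note that quaternion-freeness is not inherited by quotients, but you frame this purely as a hazard for induction; in fact it already invalidates the single reduction your main argument needs. What actually works in this example is not an $\mathbb{F}_{2}$-valued form but the full $\Phi(P)$-valued squaring map together with the characteristic subgroup $P'=\langle i^{2}\rangle$ sitting inside $\Phi(P)$ --- a finer invariant than your outline retains.
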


\begin{lemma}[{\cite[Lemma 2.10]{Su-2014}}]\label{phi}
	Let $ p $ be a prime, $ E $ a normal subgroup of a group $ G $ such that $ p $ divides the order of $ E $. Then $ E \leq  \mathrm{Z}_{\UU_{p}}(G) $
	if and only if $ E /\Phi(E) \leq \mathrm{Z}_{\UU_{p}}(G/\Phi(E)) $.
\end{lemma}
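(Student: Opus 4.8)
The plan is to reduce both implications to the single identity
$\mathrm{Z}_{\UU_{p}}(G/\Phi(E)) = \mathrm{Z}_{\UU_{p}}(G)/\Phi(E)$, from which the equivalence is read off immediately. Two standard facts about the class $\UU_{p}$ and about Frattini subgroups drive the argument, and I would isolate them at the outset. First I would record that $\UU_{p}$ is a saturated formation and that, consequently, $\Phi(G) \le \mathrm{Z}_{\UU_{p}}(G)$: every $G$-chief factor $L/K$ lying below $\Phi(G)$ satisfies $L/K \le \Phi(G)K/K \le \Phi(G/K)$, so it is a Frattini chief factor, and Frattini chief factors are $\FF$-central for any saturated formation $\FF$. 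Second, I would invoke the classical fact that $\Phi(E) \le \Phi(G)$ whenever $E \unlhd G$ (one-line proof: if a maximal subgroup $M$ of $G$ missed $\Phi(E)$ then $G = \Phi(E)M$, whence by Dedekind $E = \Phi(E)(E \cap M)$, and the non-generator property of $\Phi(E)$ in $E$ forces $E \le M$, a contradiction). Combining these, $\Phi(E) \le \Phi(G) \le \mathrm{Z}_{\UU_{p}}(G)$, so $\Phi(E)$ sits inside the $\UU_{p}$-hypercentre and the quotient $\mathrm{Z}_{\UU_{p}}(G)/\Phi(E)$ is meaningful.

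Next I would prove the identity $\mathrm{Z}_{\UU_{p}}(G/\Phi(E)) = \mathrm{Z}_{\UU_{p}}(G)/\Phi(E)$ by two inclusions. The inclusion $\mathrm{Z}_{\UU_{p}}(G)/\Phi(E) \le \mathrm{Z}_{\UU_{p}}(G/\Phi(E))$ holds for any formation: a $G/\Phi(E)$-chief factor below $\mathrm{Z}_{\UU_{p}}(G)/\Phi(E)$ corresponds to a $G$-chief factor $L/K$ with $\Phi(E) \le K < L \le \mathrm{Z}_{\UU_{p}}(G)$, and since both the factor $L/K$ and its centralizer $\mathrm{C}_{G}(L/K)$ are unchanged on passing to $G/\Phi(E)$, the $\UU_{p}$-centrality of $L/K$ in $G$ transfers verbatim to $G/\Phi(E)$. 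For the reverse inclusion I would use that $\Phi(E) \le \mathrm{Z}_{\UU_{p}}(G)$: writing $L/\Phi(E) = \mathrm{Z}_{\UU_{p}}(G/\Phi(E))$, every $G$-chief factor strictly between $\Phi(E)$ and $L$ is $\UU_{p}$-central in $G$ (again because the factor and its centralizer are unaffected by the passage to $G/\Phi(E)$, so centrality transfers back), while every $G$-chief factor below $\Phi(E)$ is $\UU_{p}$-central because $\Phi(E) \le \mathrm{Z}_{\UU_{p}}(G)$; hence every $G$-chief factor below $L$ is $\UU_{p}$-central and therefore $L \le \mathrm{Z}_{\UU_{p}}(G)$.

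Finally, both directions of the lemma follow from this identity using only $\Phi(E) \le E$. If $E \le \mathrm{Z}_{\UU_{p}}(G)$, then $E/\Phi(E) \le \mathrm{Z}_{\UU_{p}}(G)/\Phi(E) = \mathrm{Z}_{\UU_{p}}(G/\Phi(E))$. Conversely, if $E/\Phi(E) \le \mathrm{Z}_{\UU_{p}}(G/\Phi(E)) = \mathrm{Z}_{\UU_{p}}(G)/\Phi(E)$, then, since $\Phi(E) \le \mathrm{Z}_{\UU_{p}}(G)$, the correspondence theorem gives $E \le \mathrm{Z}_{\UU_{p}}(G)$. The hypothesis $p \mid |E|$ plays no essential role in this scheme: if $p \nmid |E|$ then $E$ is a $p'$-group, every chief factor below $E$ is $\UU_{p}$-central, and both sides hold trivially; the hypothesis merely excludes the degenerate case.

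I expect the only genuine obstacle to be the reverse inclusion of the central identity. Its validity hinges on the stronger statement $\Phi(E) \le \Phi(G)$ rather than the mere normality $\Phi(E) \unlhd G$: it is precisely this containment, together with the saturation of $\UU_{p}$, that guarantees the chief factors below $\Phi(E)$ to be Frattini and hence $\UU_{p}$-central, which is what lets centrality above $\Phi(E)$ be promoted to centrality throughout $E$. Everything else is bookkeeping with the definition of the $\FF$-hypercentre.
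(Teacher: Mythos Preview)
The paper does not prove this lemma; it is quoted from \cite{Su-2014} without argument, so there is no proof in the paper to compare against. Your forward direction is fine: once $E\le \mathrm{Z}_{\UU_{p}}(G)$ we have $\Phi(E)\le \mathrm{Z}_{\UU_{p}}(G)$ and the quotient inclusion is immediate. The problem is in the reverse direction, and specifically in the claim that $\Phi(G)\le \mathrm{Z}_{\UU_{p}}(G)$ because ``Frattini chief factors are $\FF$-central for any saturated formation $\FF$.'' That assertion is false. Take $p=2$ and $G=(\mathrm{C}_{4}\times \mathrm{C}_{4})\rtimes \mathrm{C}_{3}$, where a generator of $\mathrm{C}_{3}$ acts by a matrix of order $3$ in $\mathrm{GL}(2,\mathbb{Z}/4)$, say $\left(\begin{smallmatrix}0&-1\\ 1&-1\end{smallmatrix}\right)$, so that the induced action on each $\mathrm{C}_{2}\times \mathrm{C}_{2}$ layer is irreducible. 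Then $\Phi(G)=\mathrm{C}_{2}\times \mathrm{C}_{2}$ is a minimal normal subgroup of $G$, hence a Frattini chief factor; but $\mathrm{C}_{G}(\Phi(G))=\mathrm{C}_{4}\times \mathrm{C}_{4}$ and $(\Phi(G))\rtimes(G/\mathrm{C}_{G}(\Phi(G)))\cong A_{4}\notin \UU_{2}$. Thus $\Phi(G)\not\le \mathrm{Z}_{\UU_{2}}(G)=1$, and your identity $\mathrm{Z}_{\UU_{p}}(G/\Phi(E))=\mathrm{Z}_{\UU_{p}}(G)/\Phi(E)$ is not even well-posed in general. What saturation of $\FF$ actually gives is the conditional statement: if \emph{every} chief factor of $G$ above $\Phi(G)$ is $\FF$-central, then so is every chief factor below $\Phi(G)$; it does not make Frattini chief factors $\FF$-central unconditionally.

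The substantive content of the lemma is precisely that, under the hypothesis $E/\Phi(E)\le \mathrm{Z}_{\UU_{p}}(G/\Phi(E))$, the $G$-chief factors inside $\Phi(E)$ are forced to be $\UU_{p}$-central --- and this cannot be obtained from formation-theoretic bookkeeping alone. A workable route is to reduce modulo $\mathrm{O}_{p'}(E)$ so that a Sylow $p$-subgroup $P$ of $E$ is normal in $G$, observe that the hypothesis yields $P/\Phi(P)\le \mathrm{Z}_{\UU}(G/\Phi(P))$, and then invoke a result of the type in Lemma~\ref{hypercenter} (that $P/\Phi(P)\le \mathrm{Z}_{\UU}(G/\Phi(P))$ implies $P\le \mathrm{Z}_{\UU}(G)$), which rests on the $\mathbb{F}_{p}[G]$-module structure of $P$ rather than on generic properties of saturated formations.
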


\begin{lemma}\label{also}
	Let $ p $ be a prime. Assume that $ N $ is a minimal normal subgroup of a group $ G $ and $ L\leq G $. Assume that $ |N|=|L|=p $. If $ LN $ satisfies the partial $ \LL $-$ \Pi $-property in $ G $, then $ L $ also satisfies the partial $ \LL $-$ \Pi $-property in $ G $.
\end{lemma}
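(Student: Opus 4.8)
The plan is to reduce at once to the case $L \ntrianglelefteq G$, so that $L_{G}=1$ (the only candidates for $L_{G}$ are $1$ and $L$, since $|L|=p$). Observe that $L\neq N$, for otherwise $L=N\unlhd G$ contradicts $L_{G}=1$; hence $L\cap N=1$, and $L\ntrianglelefteq G$ forces $L<L^{G}$. Also $(LN)^{G}=L^{G}N^{G}=L^{G}N$ since $N\unlhd G$. Fix a maximal $G$-invariant subgroup $K$ of $L^{G}$, so that $L^{G}/K$ is a $G$-chief factor; we must show that $|G/K:\mathrm{N}_{G/K}(LK/K)|$ is a $\pi(LK/K)$-number. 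If $L\leq K$ this holds trivially, so assume $L\not\leq K$, i.e.\ $|LK/K|=p$; then the claim is that $|G:\mathrm{N}_{G}(LK)|$ is a $p$-number. Since $N$ is minimal normal of order $p$, either $N\leq L^{G}$ or $N\cap L^{G}=1$, and I treat these separately. In each case a recurring degenerate sub-case is $L^{G}=LK$, where $LK/K=L^{G}/K\unlhd G/K$ and the index is $1$.

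\emph{Case $N\leq L^{G}$.} Then $(LN)^{G}=L^{G}$, so $L^{G}/K$ is a $G$-chief factor below $(LN)^{G}$. Applying maximality of $K$ to the $G$-invariant subgroup $NK$, either $NK=L^{G}$ — then $|L^{G}/K|=|N:N\cap K|=p$ and $LK/K=L^{G}/K$, done — or $N\leq K$, in which case $LNK=LK$. In the latter situation I split on whether $(LN)_{G}\leq K$. If $(LN)_{G}\leq K$, then applying the partial $\LL$-$\Pi$-property of $LN$ to the chief factor $(LN)^{G}/K=L^{G}/K$ gives that $|G/K:\mathrm{N}_{G/K}(LNK/K)|=|G/K:\mathrm{N}_{G/K}(LK/K)|$ is a $\pi(LK/K)$-number, as required. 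If $(LN)_{G}\not\leq K$, then $(LN)_{G}K=L^{G}$ and, since $(LN)_{G}\leq LN$, we get $L^{G}\leq LNK=LK$, so $L^{G}=LK$.

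\emph{Case $N\cap L^{G}=1$.} Then $(LN)^{G}=L^{G}N$ with $L^{G}\cap N=1$, and one readily checks (using $L^{G}\cap N=1$) that $L^{G}\cap KN=K$ and $L^{G}\cap LKN=LK$. The first identity gives $(LN)^{G}/KN\cong L^{G}/K$, so $KN$ is a maximal $G$-invariant subgroup of $(LN)^{G}$; the second, together with $L^{G}\unlhd G$, gives $\mathrm{N}_{G}(LKN)=\mathrm{N}_{G}(LK)$ (an element normalizing $LKN$ normalizes $L^{G}$, hence $L^{G}\cap LKN=LK$; conversely, one normalizing $LK$ normalizes $N$, hence $LKN$), so that $|G/KN:\mathrm{N}_{G/KN}(LKN/KN)|=|G/K:\mathrm{N}_{G/K}(LK/K)|$. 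Now split on $(LN)_{G}$. If $(LN)_{G}\leq KN$: here $L\leq KN$ would force $LK\leq L^{G}\cap KN=K$, i.e.\ $L\leq K$, which is excluded, so $|LKN/KN|=p$; applying the partial $\LL$-$\Pi$-property of $LN$ to the chief factor $(LN)^{G}/KN$ makes $|G/KN:\mathrm{N}_{G/KN}(LKN/KN)|$, hence $|G/K:\mathrm{N}_{G/K}(LK/K)|$, a $p$-number. If $(LN)_{G}\not\leq KN$: then $(LN)_{G}\cdot KN=L^{G}N$, and $(LN)_{G}\leq LN$ gives $L^{G}N\leq LKN$, whence $L^{G}=L^{G}\cap LKN=LK$.

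The load-bearing points are the two dichotomies on the core $(LN)_{G}$ relative to the maximal $G$-invariant subgroups $K$ and $KN$ (which use precisely that $K$, resp.\ $KN$, is \emph{maximal}), the observation that $KN$ is maximal $G$-invariant in $(LN)^{G}$, and the normalizer identity $\mathrm{N}_{G}(LKN)=\mathrm{N}_{G}(LK)$; everything else is routine bookkeeping with the modular law and with the correspondence between normalizers in $G$ and in its quotients. I expect the main obstacle to be organisational rather than conceptual: ensuring that every maximal $G$-invariant subgroup $K\leq L^{G}$ falls into one of the branches above, and recognising in each degenerate branch that $LK/K$ is already normal in $G/K$, so the index is $1$.
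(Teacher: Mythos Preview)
Your proof is correct and follows essentially the same strategy as the paper's: reduce to showing that $|G:\mathrm{N}_{G}(LK)|$ is a $p$-number by identifying $LK$ with $LNK\cap L^{G}$ and then invoking the partial $\LL$-$\Pi$-property of $LN$ at a suitable maximal $G$-invariant subgroup of $(LN)^{G}$ (namely $K$ or $NK$). The only real difference is organisational. The paper first observes that $N\leq (LN)_{G}\leq LN$ forces $(LN)_{G}\in\{N,\,LN\}$, and splits on that; it also proves the identity $LK=LNK\cap L^{G}$ once at the outset (via a short contradiction showing $|(LNK\cap L^{G})/K|\leq p$) and uses it uniformly, which makes the degenerate branches immediate. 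You instead split first on $N\leq L^{G}$ versus $N\cap L^{G}=1$, and re-derive the intersection identity separately in each branch (as $LNK=LK$ when $N\leq K$, and as $L^{G}\cap LKN=LK$ when $N\cap L^{G}=1$). Both routes cover the same sub-cases; the paper's packaging is a bit more compact, while yours makes the normalizer comparison $\mathrm{N}_{G}(LKN)=\mathrm{N}_{G}(LK)$ in the $N\cap L^{G}=1$ case more explicit.
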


\begin{proof}[\bf{Proof}]
	
	If $ L \unlhd G $, there is nothing to prove. Hence we can suppose that $ L \ntrianglelefteq G $. Thus $ N\not =L $ and $ |LN|=p^{2} $.  Let $ L^{G}/K $ be a chief factor of $ G $ with $ L_{G}\leq K $. Assume that $ (LNK/K)\cap (L^{G}/K) $ has order $ p^{2} $. Then $ LN\cap K=1 $ and $ LNK\leq L^{G} $. But $ L^{G}=NK $ and $ |(LNK/K)\cap (L^{G}/K )| = |(LNK/K ) \cap (NK/K)| = | NK/K| = p $. This contradicts our  assumption. Hence we have $ |(LNK\cap L^{G})/K|\leq p $. Obviously, if $ |LK/K|= 1 $, there is nothing to prove.  Thus we can assume that $ |LK/K| = p = |(LNK\cap L^{G})/K| $. Therefore $ LK=LNK\cap L^{G} $.
	
	It suffices to show that $ |G:\mathrm{N}_{G}(LK)| $  is a $ p $-number. If $ (LN)_{G}=LN $, then $ |G:\mathrm{N}_{G}(LK)|=|G:\mathrm{N}_{G}(LNK\cap L^{G})|=1 $  is a $ p $-number, as desired. Assume that $ (LN)_{G}=N $. If $ N\leq K $, then $ K $  is a maximal $ G $-invariant subgroup  of $ L^{G}N $. Since $ LN $ satisfies the partial $ \LL $-$ \Pi $-property in $ G $, we have that  $ |G:\mathrm{N}_{G}(LK)| $ is a $ p $-number, as desired. If $ N\nleq K $, then $ NK $ is a  maximal $ G $-invariant subgroup  of $ L^{G}N $ with $ (NL)_{G}\leq NK $. Since $ LN $ satisfies the partial $ \LL $-$ \Pi $-property in $ G $, we have that  $ |G:\mathrm{N}_{G}(LNK)| $ is a $ p $-number,  and so $ |G:\mathrm{N}_{G}(LK)|=|G:\mathrm{N}_{G}(LNK\cap L^{G})| $  is a $ p $-number, as desired. Hence the proof is complete.
\end{proof}



\begin{lemma}\label{Normal}
	Let  $ P\in {\rm Syl}_{p}(G) $, $ d $ be a power of $ p $ with $ p\leq d\leq |P| $, and let $ N $ be a minimal normal subgroup of $ G $ with $ d\big | |N| $. Assume  that every subgroup of $ P $ of order $ d $ satisfies the partial $ \Pi $-property in $ G $. Then $ |N|=d $; and if addition $ d\geq p^{2} $, then every minimal normal subgroup of $ G $ with order divisible by $ p $ is $ G $-isomorphic to $ N $.
\end{lemma}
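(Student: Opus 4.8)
The plan is to distil, from the partial $\Pi$-hypothesis via Lemma~\ref{passing}, the statement that certain $p$-subgroups have $p$-power normaliser index in $G$, and then to force a contradiction by a counting argument on lattices of subgroups.

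For the first assertion, note that $p\mid |N|$ since $d\mid |N|$ and $d\ge p$; put $N_{0}=P\cap N$, a Sylow $p$-subgroup of $N$, so $d\mid |N_{0}|$. Every subgroup of $N$ of order $d$ is a $p$-subgroup, hence $G$-conjugate into $N_{0}\le P$, and so satisfies the partial $\Pi$-property in $G$. I would first show: if $|N|>d$ then for every subgroup $H$ of $N$ of order $d$ one has $H^{G}=N$ and $H_{G}=1$ (both by minimality of $N$, using $N\not\le H$), so that the only $G$-chief factor between $1$ and $N$ is $N$ itself, and Lemma~\ref{passing} then forces $|G:\mathrm{N}_{G}(H)|$ to be a $\pi(H)$-number, i.e.\ a power of $p$. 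Since also no such $H$ is normal in $G$ (it would be a proper nontrivial normal subgroup inside the minimal normal $N$), every $G$-orbit on the order-$d$ subgroups of $N$ has size divisible by $p$, so their number is divisible by $p$; this contradicts the theorem of Frobenius that the number of subgroups of order $p^{k}$ in a finite group is $\equiv 1 \pmod p$ (for $N$ abelian this number is the Gaussian binomial $\binom{\log_{p}|N|}{\log_{p}d}_{p}$, which has constant term $1$ as a polynomial in $p$ and so is prime to $p$). Hence $|N|=d$; in particular $N$ is elementary abelian.

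For the second assertion, assume $d=p^{m}\ge p^{2}$, so $N$ is an irreducible $\mathbb{F}_{p}[G]$-module of dimension $m\ge 2$, and let $M$ be a minimal normal subgroup with $p\mid |M|$ and $M\ne N$; then $M\cap N=1$, $[M,N]=1$ and $MN=M\times N$. The main step will be to show that $M\not\cong_{G}N$ is impossible (this hypothesis holds automatically when $M$ is nonabelian, or when $M$ is a $p$-group with $|M|\ne d$, since $G$-isomorphic modules have equal order). Assuming $M\not\cong_{G}N$, a short argument using minimality of $M$ and $N$ and $M\not\cong_{G}N$ shows that the only normal subgroups of $G$ inside $NM$ are $1$, $N$, $M$ and $NM$; thus any chief series of $G$ through $1$ and $NM$ runs $1<N<NM<\cdots$ or $1<M<NM<\cdots$. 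Now I would fix $M_{1}\le P\cap M$ of order $p$ and an arbitrary hyperplane $N_{1}$ of $N$ (so $|N_{1}|=d/p$, using $d\ge p^{2}$), and set $H=M_{1}\times N_{1}\le P$, a subgroup of order $d$. A computation gives $H^{G}=NM$ and $H_{G}=1$ (except when $|M|=p$, where $H_{G}=M$); then, whichever of the two admissible chief series Lemma~\ref{passing} delivers, evaluating its condition at the two $G$-chief factors below $NM$ shows that $|G:\mathrm{N}_{G}(N_{1})|$ (and $|G:\mathrm{N}_{G}(M_{1})|$) is a power of $p$ --- in $1<N<NM$ this comes from the factors $N/1$ and $NM/N$, in $1<M<NM$ from $M/1$ and $NM/M$, and in the exceptional case from $NM/M\cong N$. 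As $N_{1}$ ranges over all hyperplanes of $N$ and no hyperplane is $G$-invariant ($N$ irreducible, $m\ge 2$), all $G$-orbits on hyperplanes of $N$ have size divisible by $p$, so their number $\binom{m}{1}_{p}=1+p+\cdots+p^{m-1}\equiv 1\pmod p$ is divisible by $p$ --- a contradiction. Therefore $M\cong_{G}N$, which is what the second assertion claims.

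The hard part will be the lack of control over \emph{which} chief series Lemma~\ref{passing} provides: for a generic order-$d$ subgroup $H$, one of the two admissible series ($N$-first or $M$-first) yields nothing useful. The device for getting around this is exactly the choice $H=M_{1}\times N_{1}$ splitting as a direct product, which makes the information extracted from the two series coincide and pin down the hyperplane $N_{1}$; the lattice-counting arguments (Gaussian binomials in the abelian cases, Frobenius' theorem for a possibly nonabelian $N$ in the first assertion) then close both assertions. The remaining care needed is routine: the exact computation of $H_{G}$ and $H^{G}$, the identification of $\mathrm{N}_{G}(N_{1}M/M)$ with $\mathrm{N}_{G}(N_{1})$ and similarly for $M_{1}$, and the small exceptional case $|M|=p$.
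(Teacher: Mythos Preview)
Your argument is correct but takes a genuinely different route from the paper's. The paper never counts subgroups; instead it exploits the freedom to pick a single test subgroup that is \emph{normal in $P$}. For the first assertion the paper takes $H\le N$ of order $d$ with $H\unlhd P$; since $H^{G}=N$ and $H_{G}=1$, the (unique) chief factor $N/1$ forces $|G:\mathrm N_{G}(H)|$ to be a $p$-number, while $P\le \mathrm N_{G}(H)$ makes this index a $p'$-number, so $H\unlhd G$ and $H=N$. For the second assertion the paper likewise builds one $L\unlhd P$ of order $d$ with $|L\cap N|=d/p$ and $|L\cap K|=p$ (from a $P$-invariant hyperplane of $N$ and a central order-$p$ subgroup of $P\cap K$); whichever of the two admissible chief series Lemma~\ref{passing} supplies, the resulting $p$-power index yields $G$-normality of $LK$, $LN$, $L\cap N$ or $L$, contradicting the minimality of $N$ or $K$ directly. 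Your global counting approach via Frobenius' theorem and Gaussian binomials is valid and has the pleasant feature of not requiring the test subgroup to be $P$-normal, but it is considerably more elaborate than needed. The paper's trick---choose the subgroup normal in $P$, so that ``$p$-power index'' plus ``$p'$-index'' immediately gives normality in $G$---recurs throughout the paper and is worth internalising.
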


\begin{proof}[\bf{Proof}]
	Let $ H $ be a normal subgroup of $ P $ of order $ d $ such that $ H \leq N $. Then $ H^{G}=N $. By hypothesis, $ H $ satisfies the partial $ \LL $-$ \Pi $-property in $ G $. Hence  $ |G : \mathrm{N}_{G}(H)| $ is a $ p $-number, and so $ H\unlhd G $. The minimality of $ N $ yields that $ H=N $. Thus $ N $ is elementary abelian  of order $ d $.
	
	Suppose  further that $ d\geq p^{2} $. Let $ K $ be a minimal normal subgroup of $ G $ with $ p\big| |K|$. Assume that $ N $ and $ K $ are not $ G $-isomorphic, and  we work to obtain a contradiction.  Let $ L $ be a normal subgroup of $ P $ of order $ d $ such that $ |L \cap N| = \frac{d}{p} $ and $ |L \cap K| = p $.

	Suppose that  $ |K|\not = p $. Clearly, $ L^{G}=N\times K $ and $ L_{G}=1 $. By hypothesis, $ L $ satisfies the partial $ \LL $-$ \Pi $-property in $ G $. For the $ G $-chief factor $ L^{G}/K $,  we have that $ |G:\mathrm{N}_{G}(LK)| $ is  a $ p $-number, and thus $ LK\unlhd G $. Hence  $ N\times K=LK $. This yields that $ |L\cap K|=1 $, a contradiction.
	For the $ G $-chief factor $ L^{G}/N $, $ |G:\mathrm{N}_{G}(LN)| $ is  a $ p $-number, and thus $ LN\unlhd G $. Hence $ N\times K=LN $. It follows that $ |K| = |L/(L \cap N)| = p $,  a contradiction.
	
	Now suppose that $ |K| = p $. Then $ K < L $, $ L_{G}=K $ and $ L^{G}=N\times K $. Since $ L  $ satisfies the partial $ \LL $-$ \Pi $-property in $ G $, we have that $ |G:\mathrm{N}_{G}(L)|=|G:\mathrm{N}_{G}(LK)| $ is a $ p $-number. Thus $ L\unlhd G $, also a contradiction.
\end{proof}

\begin{lemma}\label{unique}
	Let $ \FF $ be a saturated formation and $ N $ a normal subgroup of a group $ G $. If $ M \unlhd G $ is of minimal order such that $ M/(M \cap N)\nleq \mathrm{Z}_{\FF}(G/(M\cap N)) $, then $ M $ has a unique maximal $ G $-invariant subgroup, say $ L $. Moreover, $ L\geq M \cap N $, $ L/(M \cap N) \leq
	\mathrm{Z}_{\FF}(G/(M \cap N)) $ and $ M/L \nleq \mathrm{Z}_{\FF}(G/L) $.
\end{lemma}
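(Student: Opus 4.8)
The plan is to run the usual minimal--counterexample extraction, relying only on elementary stability properties of the $\FF$-hypercentre: that the product of finitely many $\FF$-hypercentral normal subgroups of a group is again $\FF$-hypercentral; that a chief factor $H/K$ of $G$ with $T\unlhd G$ and $T\le K$ is $\FF$-central in $G$ exactly when the corresponding chief factor of $G/T$ is $\FF$-central in $G/T$; and that $G$-isomorphic chief factors of $G$ are simultaneously $\FF$-central or not (they have the same centraliser in $G$). All of these are standard.

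First I would note that $M\nleq N$, for otherwise $M=M\cap N$ and $M/(M\cap N)=1\le\mathrm{Z}_{\FF}(G/(M\cap N))$, contradicting the choice of $M$; hence $M\cap N$ is a proper $G$-invariant subgroup of $M$, so $M$ possesses at least one maximal $G$-invariant subgroup. To prove uniqueness I would assume that $L_{1}\ne L_{2}$ are two maximal $G$-invariant subgroups of $M$, so that $L_{1}L_{2}=M$, and use $|L_{i}|<|M|$ together with the minimality of $M$ to obtain $L_{i}/(L_{i}\cap N)\le\mathrm{Z}_{\FF}(G/(L_{i}\cap N))$ for $i=1,2$. Then I would distinguish two cases. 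If $M\cap N\le L_{1}\cap L_{2}$, then $L_{i}\cap N=M\cap N$, so $L_{1}/(M\cap N)$ and $L_{2}/(M\cap N)$ are $\FF$-hypercentral normal subgroups of $G/(M\cap N)$, whence their product $M/(M\cap N)$ is $\FF$-hypercentral as well, i.e.\ $M/(M\cap N)\le\mathrm{Z}_{\FF}(G/(M\cap N))$, a contradiction. Otherwise, say $M\cap N\nleq L_{1}$; then $(M\cap N)L_{1}=M$ by maximality and $L_{1}\cap(M\cap N)=L_{1}\cap N$ (as $L_{1}\le M$), so $M/(M\cap N)\cong L_{1}/(L_{1}\cap N)$ as $G$-groups, and transporting a $G$-chief series of $L_{1}/(L_{1}\cap N)$ (whose factors are $\FF$-central) along this isomorphism produces a $G$-chief series of $M/(M\cap N)$ with all factors $\FF$-central, giving $M/(M\cap N)\le\mathrm{Z}_{\FF}(G/(M\cap N))$, again a contradiction. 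Thus $M$ has a unique maximal $G$-invariant subgroup $L$.

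For the remaining assertions: since $M\cap N$ is a proper $G$-invariant subgroup of $M$, it is contained in $L$, so $L\cap N=M\cap N$; and because $|L|<|M|$, the minimality of $M$ yields $L/(M\cap N)=L/(L\cap N)\le\mathrm{Z}_{\FF}(G/(M\cap N))$. Finally, were $M/L\le\mathrm{Z}_{\FF}(G/L)$, then a $G$-chief series from $M\cap N$ to $M$ refined through $L$ would have all its factors below $L$ $\FF$-central (by $L/(M\cap N)\le\mathrm{Z}_{\FF}(G/(M\cap N))$) and all its factors between $L$ and $M$ $\FF$-central (by $M/L\le\mathrm{Z}_{\FF}(G/L)$, the $\FF$-centrality of such a factor being unaffected by passing to $G/L$), forcing $M/(M\cap N)\le\mathrm{Z}_{\FF}(G/(M\cap N))$, contrary to the choice of $M$; hence $M/L\nleq\mathrm{Z}_{\FF}(G/L)$.

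I expect the only delicate point to be the case $M\cap N\nleq L_{1}$: one must check carefully that the natural map $M/(M\cap N)\to L_{1}/(L_{1}\cap N)$ really is a $G$-isomorphism and that carrying a chief series across it preserves the $\FF$-centrality of every factor (via equality of centralisers in $G$); everything else is routine use of the isomorphism theorems and the cited properties of $\mathrm{Z}_{\FF}$.
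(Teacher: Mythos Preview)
Your proof is correct and follows essentially the same line as the paper's. The only notable difference is that the paper avoids your case split by passing uniformly to $G/N$: from $L_{i}/(L_{i}\cap N)\le\mathrm{Z}_{\FF}(G/(L_{i}\cap N))$ it deduces $L_{i}N/N\le\mathrm{Z}_{\FF}(G/N)$, hence $MN/N=(L_{1}N/N)(L_{2}N/N)\le\mathrm{Z}_{\FF}(G/N)$ and thus $M/(M\cap N)\cong MN/N\le\mathrm{Z}_{\FF}(G/(M\cap N))$, which handles your ``delicate'' second case for free; for the final assertion the paper simply cites a reference whereas you spell the chief-series argument out.
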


\begin{proof}[\bf{Proof}]
	Assume that $ M $ has two different maximal $ G $-invariant subgroups,  $ L_{1} $, $ L_{2} $ say. By the minimality of $ M $, $ L_{i}/(L_{i}\cap N)\leq \mathrm{Z}_{\FF}(G/(L_{i}\cap N)) $  for each $ i \in \{ 1, 2 \} $. This implies that $ L_{1}N/N $ and $ L_{2}N/N $  are contained in $ \mathrm{Z}_{\FF}(G/N) $. Consequently $ M/(M \cap N)\cong MN/N = (L_{1}N/N)(L_{2}N/N) \leq \mathrm{Z}_{\FF}(G/N) $, a contradiction. Hence $ M $ has  a unique maximal $ G $-invariant subgroup,  $ L $ say. Clearly,  $ M > M \cap N $. Now the uniqueness of $ L $ implies $ L\geq M \cap N $. Since $ L/(L \cap N)\leq \mathrm{Z}_{\FF}(G/(L\cap N)) $  and $ L \cap N = M \cap N $, we deduce that $ L/(M \cap N)\leq \mathrm{Z}_{\FF}(G/(M\cap N)) $. Since $ M/(M \cap N)\nleq \mathrm{Z}_{\FF}(G/(M\cap N)) $, it follows from \cite[Theorem 1.2.7(b)]{Guo2015} that $ M/L\nleq \mathrm{Z}_{\FF}(G/L) $.
\end{proof}

\begin{lemma}[{\cite[Lemma 2.12]{Qiu}}]\label{hyper}
	Let $ N $ be a normal subgroup of a group $ G $ and $ P\in {\rm Syl}_{p}(N) $. Then $ N\leq \mathrm{Z}_{\UU_{p}}(G) $ if and  only if
	all cyclic subgroups of $ P $ of order $ p $ or $ 4 $ (when   $ P $ is  not quaternion-free) are contained in $ \mathrm{Z}_{\UU_{p}}(G) $.
\end{lemma}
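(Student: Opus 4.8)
The plan is to dismiss the necessity as a triviality and to prove the sufficiency by induction on $|G|$. For necessity, if $N\le\mathrm{Z}_{\UU_{p}}(G)$ then $P\le N\le\mathrm{Z}_{\UU_{p}}(G)$, and since $\mathrm{Z}_{\UU_{p}}(G)$ is a subgroup of $G$ every cyclic subgroup of $P$ lies in it. For the sufficiency write $Z=\mathrm{Z}_{\UU_{p}}(G)$; the first thing I would observe is that the hypothesis is equivalent to $\Omega(P)\le Z$, because $\Omega(P)$ is generated by the cyclic subgroups of $P$ of order $p$ together with, when $p=2$ and $P$ is not quaternion-free, those of order $4$, all of which lie in $Z$ by assumption, and conversely each such subgroup is contained in $\Omega_{1}(P)$ or $\Omega_{2}(P)\le\Omega(P)$.

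Next I would remove the $p'$-part. If $p\nmid|N|$, then every $G$-chief factor below $N$ is a $p'$-group, so $N\le Z$; hence assume $p\mid|N|$. Put $R=\mathrm{O}_{p'}(N)$, a normal subgroup of $G$; then $R\le Z$. If $R\ne1$, pass to $\overline{G}=G/R$, where $\overline{N}$ is normal with Sylow $p$-subgroup $\overline{P}=PR/R$. By Dedekind's law every subgroup of $\overline{P}$ has the form $TR/R$ with $T\le P$ and $TR/R\cong T$, so each cyclic subgroup of $\overline{P}$ of order $p$ or $4$ equals $TR/R$ for a cyclic $T\le P$ of the same order, whence $TR/R\le ZR/R\le\mathrm{Z}_{\UU_{p}}(\overline{G})$; thus the hypothesis holds in $\overline{G}$. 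By induction $\overline{N}\le\mathrm{Z}_{\UU_{p}}(\overline{G})$, and combined with $R\le Z$ this gives $N\le Z$. Hence I may assume in addition $\mathrm{O}_{p'}(N)=1$.

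The first genuinely hard point, I expect, is to show that $N$ is $p$-soluble. The idea is that if $N$ were not $p$-soluble it would have a non-abelian composition factor of order divisible by $p$, hence a subnormal section through which one could transport $\Omega$ of a suitable Sylow $p$-subgroup of a subnormal subgroup lying in $P$; following this through $\Omega(P)\le Z$ by taking normal closures would place a non-abelian $G$-chief factor of order divisible by $p$ inside $Z$, contradicting that $Z$ is $p$-soluble. (Alternatively one may quote the known fact that the $p$-supersoluble hypercentre is detected by the minimal subgroups of a Sylow $p$-subgroup.) Once $N$ is $p$-soluble, $\mathrm{O}_{p'}(N)=1$ forces $\mathrm{O}_{p}(N)\ne1$ and $\mathrm{C}_{N}(\mathrm{O}_{p}(N))\le\mathrm{O}_{p}(N)$.

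The engine is then the Thompson critical subgroup, applied to the normal $p$-subgroup $\mathrm{O}_{p}(N)$ of $G$. Let $D$ be a Thompson critical subgroup of $\mathrm{O}_{p}(N)$; since $D\le\mathrm{O}_{p}(N)\le P$, Lemma~\ref{critical} shows every element of $\Omega(D)$ has order $p$, or at most $4$ when $p=2$, so $\Omega(D)\le\Omega(P)\le Z$. As $\Omega(D)$ is a $p$-group it then lies in the normal $p$-subgroup $Z\cap\mathrm{O}_{p}(N)$ of $G$, all of whose $G$-chief factors have order $p$; hence $\Omega(D)\le\mathrm{Z}_{\UU}(G)$, and Lemma~\ref{hypercenter}, applied to $\mathrm{O}_{p}(N)$ and $D$, yields $\mathrm{O}_{p}(N)\le\mathrm{Z}_{\UU}(G)$, which for a $p$-group means $\mathrm{O}_{p}(N)\le\mathrm{Z}_{\UU_{p}}(G)=Z$. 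To complete the induction I would pass to $G/\mathrm{O}_{p}(N)$ and apply the inductive hypothesis to $N/\mathrm{O}_{p}(N)$, climbing the upper $p$-series of $N$ (and invoking Lemma~\ref{phi} where a Frattini reduction is needed). The main obstacle in doing this cleanly is that the minimal-subgroup hypothesis must survive the quotient by $\mathrm{O}_{p}(N)$, even though an element of order $p$ in $P/\mathrm{O}_{p}(N)$ need not lift to one of order $p$ in $P$; the point is that one never actually needs such lifts, since at each level it is the Thompson-critical-subgroup step above—whose input is always a subgroup of $\Omega$ of exponent $p$ (or $\le 4$)—that does the work. Assembling the stages gives $N\le\mathrm{Z}_{\UU_{p}}(G)$.
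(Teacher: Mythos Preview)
The paper does not prove this lemma itself---it is quoted from \cite{Qiu}---so there is no in-paper argument to compare against directly, only correctness to assess.

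Your reduction to $\mathrm{O}_{p'}(N)=1$ and the Thompson-critical-subgroup step giving $\mathrm{O}_{p}(N)\le\mathrm{Z}_{\UU}(G)\le Z$ are sound. The $p$-solubility paragraph is too vague as written, though it is salvageable: with $\mathrm{O}_{p'}(N)=1$, any minimal normal subgroup $M$ of $G$ contained in $N$ has $p\mid|M|$, so $P\cap M$ contains an element $x$ of order $p$; then $\langle x\rangle\le Z$ and, since $Z\unlhd G$ and $M$ is minimal normal, $M=\langle x\rangle^{G}\le Z$, forcing $M$ to be a $p$-group. Iterating up a $G$-chief series of $N$ gives $p$-solubility. (Your parenthetical alternative---quoting that ``the $p$-supersoluble hypercentre is detected by minimal subgroups''---is the lemma you are proving, so it cannot be invoked here.)

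The genuine gap is the final step. You want to pass to $G/\mathrm{O}_{p}(N)$ and ``climb the upper $p$-series'', but the hypothesis on cyclic subgroups of order $p$ or $4$ does \emph{not} transfer to quotients by $p$-groups, exactly as you note. Your workaround---that ``at each level it is the Thompson-critical-subgroup step that does the work''---fails for the same reason: that step requires $\Omega$ of the critical subgroup of the \emph{next} $p$-layer (a subgroup of a proper quotient of $G$) to lie in the hypercentre, and without lifts of order-$p$ elements you have no way to verify this. The induction simply does not close. The correct finish, once you have $\mathrm{O}_{p}(N)\le Z$, is not to climb but to observe that $N$ $p$-soluble with $\mathrm{O}_{p'}(N)=1$ gives $\mathrm{F}^{*}_{p}(N)=\mathrm{O}_{p}(N)\le\mathrm{Z}_{\UU_{p}}(G)$, and then \cite[Lemma~2.13]{Su-2014} (which this paper itself invokes in the proof of Theorem~\ref{super}, Step~2) yields $N\le\mathrm{Z}_{\UU_{p}}(G)$ in one stroke.
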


\begin{lemma}[{\cite[Proposition 2.5]{Qian2012}}]\label{Qian}
	Let $ p $ be a prime and let $ N = W_{1} \times  \cdots \times W_{s}  $ be a normal  subgroup of $ G $ with $ \mathrm{C}_{G}(N) = 1 $, where $ W_{i} $  is a nonabelian simple group of order divisible by $ p $ for every $ i=1, ..., s $. Then $ |G/N|_{p} < |N|_{p}  $.
\end{lemma}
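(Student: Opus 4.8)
The plan is to exploit the hypothesis $\mathrm{C}_G(N)=1$ in order to view $G$ inside $\mathrm{Aut}(N)$, and then to reduce the inequality to a statement about outer automorphism groups of nonabelian simple groups. Since $G$ acts on $N$ by conjugation with kernel $\mathrm{C}_G(N)=1$, the induced map $G\to\mathrm{Aut}(N)$ is injective and carries $N$ onto $\mathrm{Inn}(N)$ (note $\mathrm{Z}(N)=1$ because $N$ is a direct product of nonabelian simple groups). As $\mathrm{Inn}(N)\unlhd\mathrm{Aut}(N)$, we get an embedding $G/N\hookrightarrow\mathrm{Aut}(N)/\mathrm{Inn}(N)=\mathrm{Out}(N)$, hence $|G/N|_p\le|\mathrm{Out}(N)|_p$. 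It therefore suffices to prove $|\mathrm{Out}(N)|_p<|N|_p$.

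Next I would compute $\mathrm{Out}(N)$. Grouping the simple direct factors of $N$ according to isomorphism type, write $N\cong T_1^{m_1}\times\cdots\times T_r^{m_r}$ with the $T_j$ pairwise non-isomorphic nonabelian simple groups, each of order divisible by $p$ (by hypothesis), and $m_j\ge 1$. Since every automorphism of $N$ permutes its simple direct factors preserving isomorphism type, $\mathrm{Aut}(N)\cong\prod_{j=1}^{r}\bigl(\mathrm{Aut}(T_j)\wr\mathrm{Sym}(m_j)\bigr)$, and dividing out $\mathrm{Inn}(N)=\prod_j\mathrm{Inn}(T_j)^{m_j}$ factorwise gives $\mathrm{Out}(N)\cong\prod_{j=1}^{r}\bigl(\mathrm{Out}(T_j)\wr\mathrm{Sym}(m_j)\bigr)$. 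Hence
\[ |\mathrm{Out}(N)|_p=\prod_{j=1}^{r}|\mathrm{Out}(T_j)|_p^{m_j}\,p^{v_p(m_j!)}, \qquad |N|_p=\prod_{j=1}^{r}|T_j|_p^{m_j}, \]
where $v_p$ is the $p$-adic valuation. So it is enough to show, for each nonabelian simple group $T$ with $p\mid|T|$ and each integer $m\ge1$, that $|\mathrm{Out}(T)|_p^{m}\,p^{v_p(m!)}<|T|_p^{m}$.

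Two ingredients then finish the proof. First, by Legendre's formula $v_p(m!)=(m-s_p(m))/(p-1)\le m-1<m$ for every $m\ge1$, since $p\ge2$ and the base-$p$ digit sum $s_p(m)$ is at least $1$. Second — and this is the real content — one needs that $|\mathrm{Out}(T)|_p<|T|_p$ for every nonabelian simple group $T$ with $p\mid|T|$; writing $|T|_p=p^a$ (so $a\ge1$) and $|\mathrm{Out}(T)|_p=p^b$ this says $b\le a-1$. Granting these, $bm+v_p(m!)\le(a-1)m+(m-1)=am-1<am$, so $|\mathrm{Out}(T)|_p^{m}\,p^{v_p(m!)}<|T|_p^{m}$, and multiplying over $j$ yields $|\mathrm{Out}(N)|_p<|N|_p$, as desired. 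The main obstacle is precisely the simple-group bound $|\mathrm{Out}(T)|_p<|T|_p$: I would obtain it by invoking the classification of finite simple groups and checking the list case by case — for alternating groups $\mathrm{Out}$ has order at most $4$ while $|T|_p$ is comparatively large; for sporadic groups $|\mathrm{Out}|\le2$; and for groups of Lie type $\mathrm{Out}$ is assembled from diagonal, field and graph automorphisms, whose $p$-part is readily seen to be dominated by the relevant power of $p$ in $|T|$ (the case to watch is the defining characteristic, where $|T|_p$ is a large power of $p$ and the field-automorphism contribution $p^{v_p(f)}$, with $q=p_0^{f}$, is tiny by comparison). This case analysis is routine but unavoidable, which is why the fact is simply quoted.
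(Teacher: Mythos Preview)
The paper does not prove this lemma at all; it merely quotes it from \cite{Qian2012} and uses it as a black box in the proof of Theorem~\ref{p-soluble}. Your proposal is therefore not competing against any argument in the present paper.

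That said, your sketch is correct and is essentially the standard route to this inequality. The embedding $G/N\hookrightarrow\mathrm{Out}(N)$ via $\mathrm{C}_G(N)=1$, the decomposition $\mathrm{Out}(N)\cong\prod_j\bigl(\mathrm{Out}(T_j)\wr\mathrm{Sym}(m_j)\bigr)$, and the reduction to the factorwise estimate are all sound. The arithmetic with Legendre's formula is fine: $v_p(m!)\le m-1$ together with $|\mathrm{Out}(T)|_p\le|T|_p/p$ gives exactly the strict inequality you need. The only nontrivial input is indeed the bound $|\mathrm{Out}(T)|_p<|T|_p$ for every nonabelian simple $T$ with $p\mid|T|$; this genuinely requires the classification, and your case outline (alternating, sporadic, Lie type in defining versus cross characteristic) is the right way to organise it. You correctly identify the defining-characteristic case as the one needing care, and the comparison of $v_p(f)$ against the large $p$-part of $|T|$ handles it.
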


\begin{lemma}\label{p-group}
	Let $ H $ be a nonidentity $ p $-subgroup of a group $ G $ and $ N $ a mininal normal subgroup of $ G $. If $ H $ satisfies the partial $ \LL $-$ \Pi $-property in $ G $ and $ H\leq N $,  then $ N $ is a $ p $-group.
\end{lemma}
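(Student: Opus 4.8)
The plan is to unwind the definition of the partial $\LL$-$\Pi$-property in this very constrained situation and then finish with a one-line Sylow-counting argument. First I would dispose of the degenerate case: if $H\unlhd G$, then $H\leq N$ together with the minimality of $N$ forces $H=N$, so $N=H$ is a $p$-group and we are done. So assume $H\ntrianglelefteq G$. Then $H_G$ is a normal subgroup of $G$ with $H_G\leq H<N$, so $H_G=1$ by minimality of $N$; likewise $1\neq H\leq H^G\leq N$ forces $H^G=N$. Since $N$ is a minimal normal subgroup of $G$, the only $G$-invariant subgroups of $H^G=N$ are $1$ and $N$, so the unique maximal $G$-invariant subgroup $K$ of $H^G$ with $H_G=1\leq K$ is $K=1$, and the associated chief factor is $N/1$. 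Applying the partial $\LL$-$\Pi$-property of $H$ to this chief factor gives that $|G:\mathrm{N}_G(H)|$ is a $\pi(H)$-number; since $H\neq 1$ is a $p$-group, $\pi(H)=\{p\}$, hence $|G:\mathrm{N}_G(H)|$ is a power of $p$.

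The key step is then to observe that a Sylow $p$-subgroup of $G$ controls the $G$-conjugacy of $H$. Choose $T\in\mathrm{Syl}_p(G)$ with $H\leq T$. Since $|T\cap \mathrm{N}_G(H)|$ divides $|\mathrm{N}_G(H)|_p=|G|_p/|G:\mathrm{N}_G(H)|$, the elementary count $|\mathrm{N}_G(H)\,T|=|\mathrm{N}_G(H)|\,|T|/|T\cap\mathrm{N}_G(H)|\geq |\mathrm{N}_G(H)|\cdot|G:\mathrm{N}_G(H)|=|G|$ shows that $G=\mathrm{N}_G(H)\,T$. Consequently every $g\in G$ may be written $g=nt$ with $n\in\mathrm{N}_G(H)$ and $t\in T$, so $H^g=H^t$; therefore $H^G=\langle H^t: t\in T\rangle\leq T$ is a $p$-group. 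Since $H^G=N$, it follows that $N$ is a $p$-group, as required.

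I do not anticipate a genuine obstacle here: the argument is essentially immediate once the hypothesis is read correctly. The only points that demand a little care are (i) checking that when $H^G=N$ is a minimal normal subgroup the only relevant chief factor is $N/1$, so that the $\LL$-$\Pi$-hypothesis really does reduce to ``$|G:\mathrm{N}_G(H)|$ is a $p$-number'', and (ii) the elementary order computation yielding $G=\mathrm{N}_G(H)\,T$; both are routine.
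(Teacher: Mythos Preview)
Your proof is correct and follows essentially the same route as the paper's. The paper likewise reduces to the observation that $H^{G}=N$, $H_{G}=1$, and that the partial $\LL$-$\Pi$-property then yields $|G:\mathrm{N}_{G}(H)|$ a $p$-number, whence $G=\mathrm{N}_{G}(H)P$ for a Sylow $p$-subgroup $P\ni H$; the only difference is that the paper invokes an external lemma (\cite[Lemma~3.4.9]{Guo-2000}) to conclude $H\leq P_{G}=\mathrm{O}_{p}(G)$, whereas you supply the elementary conjugacy argument $H^{G}=\langle H^{t}:t\in T\rangle\leq T$ directly.
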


\begin{proof}[\bf{Proof}]
	Clearly, $ H^{G}=N $ and $ H_{G}=1 $. By hypothesis, $ H $ satisfies the partial  $ \LL $-$ \Pi $-property in $ G $. Hence  $ |G:\mathrm{N}_{G}(H)| $ is a $ p $-number. This implies
	that $ G = \mathrm{N}_{G}(H)P  $, where $ P $ is a Sylow $ p $-subgroup of $ G $ containing $ H $. By \cite[Lemma 3.4.9]{Guo-2000}, it follows that $ H\leq P_{G} $. Thus $ N=H^{G}\leq \mathrm{O}_{p}(G) $.
\end{proof}




\begin{lemma}\label{less}
	Let $ G $ be a $ p $-soluble group of order divisible by $ p $. Then $ |G/\mathrm{O}_{p',p}(G)|_{p} < |\mathrm{O}_{p',p}(G)|_{p} $ .
\end{lemma}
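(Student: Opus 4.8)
\emph{Proof sketch and plan.}

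The plan is to argue by induction on $|G|$, first normalising the group and then reducing everything to a statement about $p$-soluble linear groups over $\mathbb{F}_p$. First I would reduce to the case $\mathrm{O}_{p'}(G)=1$. Passing to $\overline{G}=G/\mathrm{O}_{p'}(G)$ preserves $p$-solubility and $p\mid|G|$, gives $\mathrm{O}_{p'}(\overline{G})=1$ and $\mathrm{O}_{p',p}(\overline{G})=\mathrm{O}_{p',p}(G)/\mathrm{O}_{p'}(G)$, and leaves both $|G/\mathrm{O}_{p',p}(G)|_p$ and $|\mathrm{O}_{p',p}(G)|_p$ unchanged; so unless $\mathrm{O}_{p'}(G)=1$ the claim follows from the inductive hypothesis applied to the smaller group $\overline{G}$. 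Assuming $\mathrm{O}_{p'}(G)=1$, we have $V:=\mathrm{O}_{p',p}(G)=\mathrm{O}_p(G)\neq1$, and the Hall--Higman Lemma 1.2.3 gives $\mathrm{C}_G(V)\leq V$; the assertion to be proved becomes $|G:V|_p<|V|$, which is trivial when $G=V$.

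Next I would reduce to $V$ elementary abelian. If $\Phi(V)\neq 1$ (note $\Phi(V)\unlhd G$), I would pass to $\widetilde{G}=G/\Phi(V)$ and check that it again satisfies the hypotheses, with $\mathrm{O}_{p',p}(\widetilde{G})=V/\Phi(V)$. The only point needing argument is $\mathrm{O}_{p'}(\widetilde{G})=1$: writing the preimage of $\mathrm{O}_{p'}(\widetilde G)$ as $\Phi(V)H$ with $H$ a $p'$-complement (Schur--Zassenhaus), Dedekind's law gives $V\cap \Phi(V)H=\Phi(V)$, hence $[H,V]\leq \Phi(V)$, so $H$ centralises $V/\Phi(V)$; coprime action then forces $H$ to centralise $V$, whence $H\leq \mathrm{C}_G(V)\leq V$ and $H=1$. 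Since $|\widetilde G|<|G|$, induction yields $|G:V|_p=|\widetilde G:V/\Phi(V)|_p<|V/\Phi(V)|\leq |V|$. So we may assume $V$ is elementary abelian of order $p^{n}$; then $\mathrm{C}_G(V)=V$, and $Y:=G/V$ embeds in $\mathrm{GL}(n,p)$, is $p$-soluble, and has $\mathrm{O}_p(Y)=1$ (the preimage in $G$ of a normal $p$-subgroup of $Y$ is a normal $p$-subgroup of $G$, hence lies in $V$). Everything now reduces to the statement: \emph{a $p$-soluble subgroup $Y$ of $\mathrm{GL}(n,p)$ with $\mathrm{O}_p(Y)=1$ satisfies $|Y|_p<p^{n}$.}

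I would prove this by induction on $n$, the cases $n\leq 2$ being immediate since then $|\mathrm{GL}(n,p)|_p<p^{n}$. Fix an $\mathbb{F}_p[Y]$-composition series $0=V_0<\cdots<V_r=V$ and set $K_i=\mathrm{C}_Y(V_i/V_{i-1})$. If $r\geq 2$, each $Y/K_i$ acts faithfully and irreducibly on a module of dimension $<n$ and has trivial $\mathrm{O}_p$ (a nontrivial normal $p$-subgroup would have nonzero fixed points, hence by irreducibility act trivially, contradicting faithfulness), so by induction $|Y/K_i|_p<p^{\dim(V_i/V_{i-1})}$; since $\bigcap_i K_i$ is a normal $p$-subgroup of $Y$ it is trivial, so $Y$ embeds in $\prod_i Y/K_i$ and multiplying the bounds gives $|Y|_p<p^{n}$.

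\textbf{The genuinely hard case is $r=1$, i.e.\ $V$ irreducible.} There $\mathrm{O}_{p'}(Y)\neq 1$, and I would run Clifford theory relative to $R:=F^{*}(Y)$ (a $p'$-group, with $\mathrm{C}_Y(R)=Z(R)$): when $Y$ permutes the homogeneous components of $V|_{R}$ transitively and nontrivially, $V$ is induced from the stabiliser of a component and one recurses on the smaller-dimensional module of that stabiliser; when $V|_{R}$ is homogeneous, one must exploit the structure of $Y/\mathrm{C}_Y(R)\hookrightarrow \mathrm{Aut}(R)$ to bound $|Y|_p$. This is in essence a classical fact about $p$-soluble linear groups over $\mathbb{F}_p$ of Hall--Higman type. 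I expect the main obstacle to be precisely the bookkeeping in the imprimitive reduction, where the point stabiliser need not act faithfully on its component and need not have trivial $\mathrm{O}_p$; the strictness of the inequality should be exactly what lets the induction absorb the extra permutation factor.
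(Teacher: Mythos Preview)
The paper's ``proof'' of this lemma is a one-line citation: it invokes \cite[Corollary~1.9(iii)]{Wolf-1984}, Wolf's paper on Sylow $p$-subgroups of $p$-solvable subgroups of $\mathrm{GL}(n,p)$, and says nothing more. Your reductions (kill $\mathrm{O}_{p'}(G)$, kill $\Phi(V)$ via coprime action, pass to $Y=G/V\hookrightarrow\mathrm{GL}(n,p)$ with $\mathrm{O}_p(Y)=1$) are all correct, and they land you exactly on the statement that is the subject of Wolf's paper: a $p$-soluble $Y\leq\mathrm{GL}(n,p)$ with $\mathrm{O}_p(Y)=1$ satisfies $|Y|_p<p^{n}$. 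Your handling of the reducible case via a composition series and the embedding $Y\hookrightarrow\prod_i Y/K_i$ is clean and gives $|Y|_p\leq p^{n-r}<p^{n}$ for $r\geq 2$.

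Where you differ from the paper is simply that you are re-proving Wolf's theorem rather than citing it. The irreducible case you flag as ``genuinely hard'' \emph{is} the substantive content of Wolf's result, and your Clifford-theoretic outline (work relative to $R=F^{*}(Y)$, induce from the stabiliser of a homogeneous component in the imprimitive case, analyse $Y/\mathrm{C}_Y(R)\hookrightarrow\mathrm{Aut}(R)$ in the homogeneous case) is the right shape; Wolf carries out precisely this kind of bookkeeping, and the issues you anticipate about the point stabiliser not acting faithfully and not having trivial $\mathrm{O}_p$ are real and are what make the paper nontrivial. So your proposal is correct as far as it goes, but it is incomplete exactly where the actual work lies; for the purposes of the present paper, citing Wolf is the efficient route, while your write-up has the virtue of making transparent \emph{why} the lemma reduces to a linear-group bound.
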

\begin{proof}[\bf{Proof}]
	By \cite[Corollary 1.9(iii)]{Wolf-1984}, the conclusion holds.
\end{proof}

\begin{lemma}[{\cite[Lemma 3.3(4)]{Qian-2020}}]\label{cyclic}
	Let $ H $ be a $ p' $-group and let $ V $ be a faithful and irreducible $ \mathbb{F}_{p}[H] $-module. Assume that $ \mathrm{dim}_{\mathbb{F}_{p}}V $ is a prime. Then $ H $ is cyclic if and only if $ V $ is not absolutely irreducible.
\end{lemma}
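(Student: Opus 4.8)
The plan is to read off the structure of $V$ from its endomorphism ring. Write $E := \operatorname{End}_{\mathbb{F}_p[H]}(V)$. Since $H$ is a $p'$-group, $\mathbb{F}_p[H]$ is semisimple, so by Schur's lemma $E$ is a division ring; as $V$ is finite, $E$ is a finite division ring, hence a (commutative) field by Wedderburn's little theorem, say $E \cong \mathbb{F}_{p^m}$ with $m = \dim_{\mathbb{F}_p} E$. I would then use the standard criterion that, in the semisimple setting, $V$ is absolutely irreducible over $\mathbb{F}_p$ precisely when $E = \mathbb{F}_p$, i.e. $m = 1$. Now $V$ is in a natural way an $E$-vector space, and the $H$-action is $E$-linear because it commutes with $E$ by definition of $E$; writing $d := \dim_E V$ we get $\dim_{\mathbb{F}_p} V = m d$. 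Because $\dim_{\mathbb{F}_p} V$ is prime, exactly one of the following holds: either $m = 1$ (and $V$ is absolutely irreducible), or $m = \dim_{\mathbb{F}_p} V$ and $d = 1$. This dichotomy is the engine of the whole argument.

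For the implication ``$V$ not absolutely irreducible $\Rightarrow$ $H$ cyclic'': the dichotomy above forces $d = 1$, so $V \cong E$ as an $E$-module and the $E$-linear action of $H$ yields a homomorphism $H \to \operatorname{Aut}_E(V) = E^{\times}$, a cyclic group of order $p^m - 1$. Since $V$ is faithful, this homomorphism is injective, and therefore $H$ is cyclic.

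For the converse ``$H$ cyclic $\Rightarrow$ $V$ not absolutely irreducible'': a cyclic group is abelian, so every irreducible $\overline{\mathbb{F}}_p[H]$-module is one-dimensional (each group element acts as a scalar by Schur's lemma). If $V$ were absolutely irreducible, then $V \otimes_{\mathbb{F}_p} \overline{\mathbb{F}}_p$ would be an irreducible $\overline{\mathbb{F}}_p[H]$-module of dimension $\dim_{\mathbb{F}_p} V \geq 2$, a contradiction; hence $V$ is not absolutely irreducible. Note that this direction does not even use the primality hypothesis, only $\dim_{\mathbb{F}_p} V > 1$.

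I do not anticipate a genuine obstacle here: the proof is essentially a repackaging of Schur's lemma, Wedderburn's little theorem, and the endomorphism-ring criterion for absolute irreducibility, all of which apply because the $p'$-hypothesis makes $\mathbb{F}_p[H]$ semisimple. The only points requiring a little care are (i) recording explicitly that $V$ is an $E$-module on which $H$ acts $E$-linearly, so that both the dimension count $\dim_{\mathbb{F}_p} V = m d$ and the embedding $H \hookrightarrow E^{\times}$ are legitimate, and (ii) invoking Wedderburn so that $E$ is commutative and $E^{\times}$ is therefore cyclic. Alternatively one may simply quote \cite{Qian-2020} as the statement indicates, but the sketch above is short and self-contained.
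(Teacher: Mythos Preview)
Your proof is correct. The paper itself does not supply a proof of this lemma; it simply records the statement with a citation to \cite[Lemma~3.3(4)]{Qian-2020}. Your argument via the endomorphism ring $E=\operatorname{End}_{\mathbb{F}_p[H]}(V)$, Wedderburn's little theorem, and the dimension dichotomy coming from $\dim_{\mathbb{F}_p}V = m\cdot d$ with $md$ prime is the standard and cleanest route, and your remarks about which hypotheses are actually used in each direction are accurate.
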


\section{Proofs}

In order to prove Theorem \ref{super}, we shall prove the following result at first.

\begin{theorem}\label{small}
	Let $ P $ be a normal $ p $-subgroup of a group $ G $ for some prime $ p\in \pi(G) $. If every cyclic subgroup of $ P $ of order $ p $ or $ 4 $ (when $ P $ is  not quaternion-free) satisfies the partial $\LL$-$ \Pi $-property in $ G $, then $ P\leq \mathrm{Z}_{\UU}(G) $.
\end{theorem}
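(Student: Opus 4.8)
The plan is to argue by contradiction, choosing a counterexample $(G,P)$ with $|G|+|P|$ minimal, running a short chain of standard reductions, and then finishing with a small module-counting argument. Throughout I will use that a $p$-chief factor is $\UU$-central if and only if it has order $p$, so that $P\le \mathrm{Z}_{\UU}(G)$ is equivalent to saying every $G$-chief factor below $P$ has order $p$.

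\emph{Reductions.} We may assume $P\neq 1$. If $R=\mathrm{O}_{p'}(G)\neq 1$, then $PR/R\cong P$ is a normal $p$-subgroup of $G/R$ inheriting the hypothesis by Lemma \ref{OV}(2); minimality gives $PR/R\le \mathrm{Z}_{\UU}(G/R)$, and since $R\le \mathrm{C}_G(P)$ this forces $P\le \mathrm{Z}_{\UU}(G)$, a contradiction. So $\mathrm{O}_{p'}(G)=1$. Next, if $D$ is a Thompson critical subgroup of $P$ and $\Omega(D)<P$, then $\Omega(D)\unlhd G$ inherits the hypothesis, minimality gives $\Omega(D)\le \mathrm{Z}_{\UU}(G)$, and Lemma \ref{hypercenter} yields $P\le \mathrm{Z}_{\UU}(G)$, a contradiction; hence $\Omega(D)=P$, so $D=P$. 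If moreover $p=2$ and $P$ is a nonabelian quaternion-free $2$-group, then by Lemma \ref{charcteristic} $P$ has a characteristic (hence $G$-invariant) subgroup $P_0$ of index $2$, again quaternion-free and inheriting the hypothesis; minimality gives $P_0\le \mathrm{Z}_{\UU}(G)$, and as $|P/P_0|=p$ this again forces $P\le \mathrm{Z}_{\UU}(G)$. By Lemma \ref{critical} we are therefore reduced to the situation in which every nonidentity element of $P$ has order $p$, with the single exception that elements of order $4$ may occur when $p=2$ and $P$ is not quaternion-free; and in that exceptional case every cyclic subgroup of $P$ of order $4$ still satisfies the partial $\LL$-$\Pi$-property in $G$.

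\emph{The module set-up.} Since $P\not\le \mathrm{Z}_{\UU}(G)$, apply Lemma \ref{unique} (with $\FF=\UU$ and $N=1$) among the normal subgroups of $G$ contained in $P$: choose $M\unlhd G$ with $M\le P$ of minimal order subject to $M\not\le \mathrm{Z}_{\UU}(G)$. Then $M$ has a unique maximal $G$-invariant subgroup $L$, with $L\le \mathrm{Z}_{\UU}(G)$ and $M/L\not\le \mathrm{Z}_{\UU}(G)$; thus $V:=M/L$ is an irreducible $\mathbb{F}_p[G]$-module of dimension $k\ge 2$. Fix any nonzero $\bar v\in V$ and a preimage $x\in M\setminus L$, and set $H=\langle x\rangle$; then $H$ is a cyclic subgroup of $P$ of order $p$, or of order $4$ with $p=2$ and $P$ not quaternion-free, so $H$ satisfies the partial $\LL$-$\Pi$-property in $G$. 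Since $H\le M$ and $H\not\le L$, we get $H^{G}\le M$ and $H^{G}\not\le L$, hence $H^{G}L=M$ by uniqueness of $L$. Put $K=H^{G}\cap L$; then $H^{G}/K\cong H^{G}L/L=V$ as $\mathbb{F}_p[G]$-modules, so $K$ is a maximal $G$-invariant subgroup of $H^{G}$, and $HK/K\cong H/(H\cap L)$. As $V$ is elementary abelian we have $\overline{x}^{\,p}=0$, so $x^{p}\in L$; hence $HK/K$ has order $p$ and (checking the cyclic subgroups of $H$, using $k\ge 2$ to rule out $H_G=H$) $H_G\le L$, whence $H_G\le H^{G}\cap L=K$. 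Therefore $H^{G}/K$ is a $G$-chief factor of type $H^{G}/K$ with $H_G\le K$, and the partial $\LL$-$\Pi$-property gives that $|G/K:\mathrm{N}_{G/K}(HK/K)|$ is a $\pi(HK/K)$-number, i.e.\ a $p$-number.

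\emph{The contradiction.} Under the isomorphism $H^{G}/K\cong V$ the subgroup $HK/K$ is the line $\langle\bar v\rangle$, $\mathrm{N}_{G/K}(HK/K)$ is the stabiliser of that line under the conjugation action of $G/K$ on $V$, and its index in $G/K$ is the size of the orbit of $\langle\bar v\rangle$ in the projective space $\mathbb{P}(V)$ under $G/K$, equivalently under its image $\overline{G}\le \mathrm{GL}(V)$. As $\bar v$ was arbitrary, every $\overline{G}$-orbit on $\mathbb{P}(V)$ has $p$-power size. But $|\mathbb{P}(V)|=1+p+\cdots+p^{k-1}\equiv 1\pmod p$, so summing the orbit sizes shows the number of $\overline{G}$-fixed lines is $\equiv 1\pmod p$, hence at least one; such a fixed line is a $1$-dimensional $\mathbb{F}_p[G]$-submodule of $V$, contradicting the irreducibility of $V$ (dimension $k\ge 2$). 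This proves $P\le \mathrm{Z}_{\UU}(G)$. I expect the main obstacles to be, first, arranging the reductions so that after them $P$ is essentially of exponent $p$ (the nonabelian quaternion-free $2$-groups being the only delicate point, handled by Lemma \ref{charcteristic}), and second, recognising that the correct chief factor to feed into the partial $\LL$-$\Pi$-property is $H^{G}/(H^{G}\cap L)$, which is $G$-isomorphic to $M/L$; once this is in place the final counting step is immediate.
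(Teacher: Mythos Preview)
Your argument is correct. The reductions (bounding the exponent of $P$ via the Thompson critical subgroup and Lemma~\ref{charcteristic}) match the paper's Steps~1--2, though you add an unnecessary $\mathrm{O}_{p'}(G)=1$ reduction and you extract the unique maximal $G$-invariant subgroup from a minimal bad $M\le P$ (via Lemma~\ref{unique}) rather than from $P$ itself.

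The genuine difference is the endgame. The paper picks a \emph{single} $x\in P\setminus N$ with $\langle x\rangle N/N\le Z(G_p/N)$, so that $HN\unlhd G_p$; then the partial $\LL$-$\Pi$-property forces $|G:\mathrm{N}_G(HN)|$ to be a $p$-number, whence $HN\unlhd G$ and $|P/N|=p$, contradicting noncyclicity of $P/N$. You instead range over \emph{all} lines $\langle\bar v\rangle$ in $V=M/L$, and use the partial $\LL$-$\Pi$-property for each preimage $H=\langle x\rangle$ (applied at the chief factor $H^G/(H^G\cap L)\cong V$) to conclude every $G$-orbit on $\mathbb{P}(V)$ has $p$-power length; since $|\mathbb{P}(V)|\equiv 1\pmod p$ there is a $G$-fixed line, contradicting irreducibility. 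The paper's route is shorter and uses only one element, but it relies on locating a central line in $P/N$ over a Sylow subgroup; your counting argument avoids that choice entirely and is the more robust of the two, at the price of invoking the hypothesis for many subgroups rather than one.
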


\begin{proof}[\bf{Proof}]
	Assume that this theorem  is not true and let $ ( G, P ) $ be a counterexample for which $ |G|+|P| $ is minimal. We proceed via the following steps.
	
	\vskip0.1in
	
	\textbf{Step 1.} $ P $ has a unique  maximal $ G $-invariant subgroup, say $ N $. Moreover, $ N\leq \mathrm{Z}_{\UU}(G) $ and $ P/N $ is not cyclic.

	Let $ N $ be a maximal $ G $-invariant subgroup of $ P $.
	Clearly, $ (G, N) $ satisfies the hypotheses of the theorem. By the choice of $ (G, P) $, we have that $ N \leq \mathrm{Z}_{\UU}(G) $.  If $ P/N $ is cyclic, then $ P/N \leq \mathrm{Z}_{\UU}(G) $,  and so $ P \leq
	\mathrm{Z}_{\UU}(G) $, a contradiction. Hence $ P/N $ is noncyclic. Now assume that $ T $ is a  maximal $ G $-invariant subgroup which is different from $ N $. Then $ T\leq \mathrm{Z}_{\UU}(G) $ by the same argument as above. By $ G $-isomorphism $ P/N\cong TN/N\cong T/(T\cap
	N) $, we deduce that $ P/N\leq \mathrm{Z}_{\UU}(G/N) $, a contradiction. Hence $ N $ is the unique  maximal $ G $-invariant subgroup of $ P $.
	
	\vskip0.1in
	
	\textbf{Step 2.} The exponent of $ P $ is $ p $ or $ 4 $ (when $ P $ is  not quaternion-free).

	Let $ D $ be a Thompson critical subgroup of $ P $.
	If $ \Omega(D)< P $, then $ \Omega(D)\leq N $ since $ N $ is the unique maximal $ G $-invariant subgroup of $ P $. In view of Step 1, we have $ \Omega(D)\leq \mathrm{Z}_{\UU}(G) $. By Lemma \ref{hypercenter}, it follows that $ P\leq \mathrm{Z}_{\UU}(G) $, a contradiction. Thus $ P = D = \Omega (D) $. If $ P $ is a nonabelian quaternion-free $ 2 $-group, then $ P $ has a characteristic subgroup $ K $ of index $ 2 $ by Lemma \ref{charcteristic}. By Step 1, we have  that $ K=N \leq \mathrm{Z}_{\UU}(G) $, and hence  $ P\leq \mathrm{Z}_{\UU}(G) $, which is impossible. This means that $ P $ is a nonabelian $ 2 $-group if and only if $ P $ is not quaternion-free. By  Lemma \ref{critical}, the exponent of $ P $ is $ p $ or $ 4 $ (when $ P $  is not quaternion-free). Hence Step 2 holds.
	
	\vskip0.1in
	
	\textbf{Step 3.} The final contradiction.

	Let $ G_{p} $ be a Sylow $ p $-subgroup of $ G $. Note that $ P/N \cap
	\mathrm{Z}(G_{p}/N) > 1 $. Let $ X/N $ be a subgroup of $ P/N \cap
	\mathrm{Z}(G_{p}/N) $ of order $ p $. Then we may choose an element $ x\in X\setminus N $. Write $ H=\langle x \rangle $. Then $ X=HN $ and $ H $ is a subgroup of order $ p $ or $ 4 $ (when $ P $ is not quaternion-free) by Step 2. In view of  Step 1, we know that $ H^{G}=P $ and $ H_{G}\leq N $. By hypothesis, $ H $ satisfies the partial $ \LL $-$ \Pi $-property in $ G $. Then $ |G:\mathrm{N}_{G}(X)|=|G:\mathrm{N}_{G}(HN)| $ is a $ p $-number. Since $ X\unlhd G_{p} $, we see that $ X\unlhd G $. This forces that $ P/N=X/N=HN/N\cong H/(H\cap N) $ is cyclic, which contradicts Step 1. This final contradiction completes the proof.
\end{proof}

Now we begin to prove Theorem \ref{super}.

\begin{proof}[\bf{Proof of Theorem \ref{super}}]
	Assume that this theorem is not true and let $ (G, E) $ be a minimal counterexample for which $ |G|+|E| $ is minimal.
	
	\vskip0.1in
	
	\textbf{Step 1.} $ \mathrm{O}_{p'}(E)=1 $.

	By Lemma \ref{OV}(2), the hypotheses hold for $ (G/\mathrm{O}_{p'}(E), N/\mathrm{O}_{p'}(E)) $. Thus the minimal choice of $ (G, E) $ implies that $ \mathrm{O}_{p'}(E) = 1 $.
	
	\vskip0.1in
	
	\textbf{Step 2.} $ E $ is not $ p $-soluble.

	Assume that $ E $ is $ p $-soluble, and we work to obtain a contradiction. Since $ \mathrm{O}_{p'}(E)=1 $, it follows from \cite[Lemma 2.10]{ball-2009} that $ \mathrm{F}^{*}_{p}(E) = \mathrm{F}_{p}(E) =
	\mathrm{O}_{p}(E) $ .  By Theorem \ref{small}, we have $ \mathrm{F}^{*}_{p}(E) =\mathrm{O}_{p}(E) \leq \mathrm{Z}_{\UU}(G) \leq \mathrm{Z}_{\UU_{p}}(G) $. Hence $ N\leq \mathrm{Z}_{\UU_{p}}(G) $ by \cite[Lemma 2.13]{Su-2014}, a contradiction.
	
	\vskip0.1in
	
	\textbf{Step 3.} $ E $ has a unique maximal $ G $-invariant subgroup $ U $. Moreover, $ 1<U\leq \mathrm{Z}_{\UU_{p}}(G) $.

	Let  $ H $ be a cyclic subgroup of $ P $ of order $ p $ or $ 4 $ (when $ P $ is not quaternion-free).
	Assume that  $ E $ is a minimal normal subgroup of $ G $. Then $ H^{G}=E $. 	By Lemma \ref{p-group}, $ E $ is a $ p $-group, contrary to Step 2. Hence $ E $ has a maximal $ G $-invariant subgroup $ U>1 $. It is easy to see  that the hypotheses also hold for $ (G, U) $ and the minimal choice of $ (G, E) $ yields that $ U\leq  \mathrm{Z}_{\UU_{p}}(G) $. Now assume that  $ R $ is a maximal $ G $-invariant subgroup of $ E $, which is different from $ U $. Then $ R\leq \mathrm{Z}_{\UU_{p}}(G) $ with the same argument
	as above, and thus $ E=UR\leq \mathrm{Z}_{\UU_{p}}(G) $, a contradiction. Therefore, $ U $ is the unique maximal $ G $-invariant subgroup of $ E $. 
	
	\vskip0.1in
	
	\textbf{Step 4.}  Every cyclic subgroup of $ P $ of order $ p $ or $ 4 $ (when $ P $ is not quaternion-free)  is contained in $ U $.

	Assume that there exists a cyclic subgroup $ L $ of $ P $ of order $ p $ or $ 4 $ (when $ P $ is not quaternion-free)  such that $ L\nleq U $. The uniqueness of $ U $ yields that $ L_{G}\leq U\leq E=L^{G} $, and so $ (LU/U)^{G/U}=E/U $. By hypothesis, $ L $ satisfies the partial $ \LL $-$ \Pi $-property in $ G $. Applying  Lemma \ref{OV}(2), $ LU/U $ satisfies the partial $ \LL $-$ \Pi $-property in $ G/U $. By Lemma \ref{p-group}, it turns out that $ E/ U $ is a $ p $-group.  Hence  $ E $ is $ p $-soluble by Step 3, a contradiction.   Therefore, Step 4 holds.
	
	\vskip0.1in
	
	\textbf{Step 5.}  The final contradiction.

	Since $ U\leq \mathrm{Z}_{\UU_{p}}(G) $, it follows from Step 4 and Lemma \ref{hyper} that $ E\leq \mathrm{Z}_{\UU_{p}}(G) $. This  final contradiction completes the proof.
\end{proof}

\begin{lemma}\label{ele}
	Let $ P \in {\rm Syl}_{p}(G) $ and $ d $ be a  power of $ p $ such that $ p^{2} \leq d < |P| $. Assume that $ \mathrm{O}_{p'}(G)=1 $ and every subgroup of $ P $ of order $ d $ satisfies the partial $ \LL $-$ \Pi $-property in $ G $. If $ G $ has a minimal normal subgroup of order $ d $, then
	
	\begin{enumerate}[\rm(1)]
		\item $ G/\mathrm{Soc}(G) $ is $ p $-supersoluble;
		\item $ \Phi(G) = 1 $ and $ \mathrm{Soc}(G) =\mathrm{O}_{p}(G) $ is a homogeneous $ \mathbb{F}_{p}[G] $-module;
		\item If in addition $ |\mathrm{O}_{p}(G)|>d $, then every irreducible $ \mathbb{F}_{p}[G] $-submodule of $ \mathrm{O}_{p}(G) $ is not absolutely irreducible.
	\end{enumerate}

\end{lemma}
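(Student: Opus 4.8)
The idea is to exploit the given minimal normal subgroup $N$ of $G$ of order $d$. Since $d=p^{k}\geq p^{2}$, $N$ is elementary abelian, $N\leq\mathrm{O}_{p}(G)$, and $N$ is an irreducible $\mathbb{F}_{p}[G]$-module; by Lemma~\ref{Normal} (applied with this $d$) every minimal normal subgroup of $G$ of $p$-power order is $G$-isomorphic to $N$, and since $\mathrm{O}_{p'}(G)=1$ no minimal normal subgroup is a $p'$-group. So the first task is to rule out nonabelian minimal normal subgroups. Suppose $K=S_{1}\times\cdots\times S_{t}\trianglelefteq G$ is minimal normal with the $S_{i}$ nonabelian simple; then $p\mid|K|$, so $Q:=P\cap K\in\mathrm{Syl}_{p}(K)$ has order at least $p$. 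If $|Q|\geq d$, choose $H\leq Q$ with $|H|=d$; then $H\neq1$, $H^{G}=K$, and $H\neq K$ (as $|K|$ is not a prime power), so $H_{G}=1$, and Lemma~\ref{p-group} forces $K$ to be a $p$-group, a contradiction. If $|Q|<d$, then $NQ\trianglelefteq P$ is a $p$-group of order $d|Q|>d$ (because $N\cap K=1$), and since $NQ/Q\cong N$ is a module for the $p$-group $P$ I may choose $H\trianglelefteq P$ with $Q\leq H\leq NQ$ and $|H|=d$; a direct count gives $|H\cap N|=d/|Q|>1$ and $H\cap K=Q$, whence $H^{G}=N\times K$ and $H_{G}=1$. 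Applying the partial $\LL$-$\Pi$-property to the chief factor $(N\times K)/N$ and using that $P$ normalises $NQ$, the index $|G/N:\mathrm{N}_{G/N}(HN/N)|$ is simultaneously a $p$-number and a $p'$-number, hence $1$; thus $NQ/N$ is a nontrivial proper $G$-invariant subgroup of the chief factor $NK/N$, a contradiction. Therefore $\mathrm{Soc}(G)$ is an elementary abelian $p$-group, homogeneous of type $N$, and $\mathrm{Soc}(G)\leq\mathrm{O}_{p}(G)$.

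Next I would prove $\Phi(G)=1$ and $\mathrm{O}_{p}(G)=\mathrm{Soc}(G)$. As $\mathrm{O}_{p'}(G)=1$, $\Phi(G)$ is a $p$-group contained in $\mathrm{O}_{p}(G)$; if $\mathrm{O}_{p}(G)\supsetneq\mathrm{Soc}(G)$ or $\Phi(G)\neq1$ one produces inside $\mathrm{O}_{p}(G)$ a subgroup $H$ of $P$ of order $d$ (again a submodule for $P$, straddling $\mathrm{Soc}(G)$), computes $H^{G}$ and $H_{G}$, and then — exactly as above, via the $p$-number/$p'$-number dichotomy forced by the $P$-invariance of $H$ — concludes that $H$ is normal in $G$; comparison with the semisimplicity of $\mathrm{Soc}(G)=\mathrm{Soc}_{\mathbb{F}_{p}[G]}(\mathrm{O}_{p}(G))$, and with $|P|>d=|N|$, yields a contradiction. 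Granted $\Phi(G)=1$ and $\mathrm{O}_{p'}(G)=1$, $G$ has no components (a component $C$ would give $Z(C^{G})\leq\Phi(C^{G})\leq\Phi(G)=1$, so $C^{G}$ would be a nonabelian minimal normal subgroup, impossible), so $\mathrm{F}^{*}(G)=\mathrm{F}(G)=\mathrm{O}_{p}(G)=\mathrm{Soc}(G)$. Hence $\mathrm{C}_{G}(\mathrm{Soc}(G))=\mathrm{Soc}(G)$, and since $\mathrm{Soc}(G)$ is homogeneous of type $N$, $\overline{G}:=G/\mathrm{Soc}(G)$ acts faithfully on the irreducible module $N$.

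For (1) I would show $\overline{G}$ is $p$-supersoluble, i.e.\ $p$-soluble of $p$-rank at most $1$. If $\overline{G}$ had a chief factor which is either nonabelian of order divisible by $p$, or an elementary abelian $p$-group of order $>p$, pull it back to $W\trianglelefteq V\trianglelefteq G$ with $\mathrm{Soc}(G)\leq W$; using that $|\mathrm{Soc}(G)|\geq d$ there is enough room in a Sylow $p$-subgroup of $V$ to pick a subgroup $H$ of order $d$ that detects this chief factor and is \emph{not} normalised by $P$. The partial $\LL$-$\Pi$-property, applied to a maximal $G$-invariant subgroup of $H^{G}$ lying above the relevant piece, then equates a $p$-number to the length of a conjugacy orbit which is $\equiv1\pmod p$ in the nonabelian case (Sylow counting in $K$) or which forces $|V:\mathrm{N}_{V}(H)|=1$ in the $p$-group case, contradicting the choice of the chief factor. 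For (3), assume $|\mathrm{O}_{p}(G)|>d$, so $\mathrm{Soc}(G)\cong N^{(r)}$ with $r\geq2$, and suppose $N$ is absolutely irreducible; then the $G$-submodule lattice of $\mathrm{Soc}(G)$ is that of $\mathbb{F}_{p}^{r}$ while $\mathrm{Hom}_{P}(N_{i},N_{j})$ is strictly larger, so I may choose $H\leq N_{1}\times N_{2}$ of order $d$ that is a ``$P$-diagonal'' but neither a $G$-submodule nor $P$-invariant; applying the partial $\LL$-$\Pi$-property to a maximal $G$-invariant subgroup of $H^{G}=N_{1}\times N_{2}$ again produces a forbidden $p$-number, so $N$ is not absolutely irreducible, proving (3).

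The main obstacle is part (1): turning ``$G$ has no bad chief factor above $\mathrm{Soc}(G)$'' into a statement about order-$d$ subgroups of $P$ is delicate, because a priori such a subgroup need not be normalised by $P$ nor lie above $\mathrm{Soc}(G)$; the hypotheses $p^{2}\leq d<|P|$ and ``$G$ has a minimal normal subgroup of order $d$'' are precisely what give enough room inside the Sylow $p$-subgroup of the relevant normal subgroup to build a test subgroup of order $d$ with controlled $G$-conjugacy behaviour. The exclusion of nonabelian minimal normal subgroups in the case $|P\cap K|<d$, and the proofs of $\Phi(G)=1$ and $\mathrm{O}_{p}(G)=\mathrm{Soc}(G)$, are the other careful points, but all of them follow the same template: produce a subgroup of order $d$, compute its normal closure and core, and read off normality in $G$ from the resulting $p$-number constraint.
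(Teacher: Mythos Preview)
Your exclusion of nonabelian minimal normal subgroups is correct but redundant: Lemma~\ref{Normal} (with $d\geq p^{2}$) already forces every minimal normal subgroup of $G$ of order divisible by $p$ to be $G$-isomorphic to $N$, and $\mathrm{O}_{p'}(G)=1$ rules out the rest, so $\mathrm{Soc}(G)\leq\mathrm{O}_{p}(G)$ is homogeneous from the outset. The real gap is your argument for~(1). Your plan---pull back a bad chief factor $V/W$ above $\mathrm{Soc}(G)$ and build a test subgroup $H$ of order $d$ ``detecting'' it---never specifies $H$, does not control $H^{G}$ or $H_{G}$, and the heuristic ``conjugacy orbit $\equiv 1\pmod p$'' does not match what the $\LL$-$\Pi$-property actually delivers (a $p$-number index, from which one extracts normality only when $HK\unlhd P$, not when $H$ is chosen \emph{not} $P$-invariant as you propose). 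The paper's key step, which you have missed, goes in the opposite direction: since $N=\mathrm{Soc}(G)$ is noncyclic of order $d$, every subgroup $X/N$ of $P/N$ of order $p$ satisfies $X=X_{1}N$ for some $X_{1}\leq X$ of order $d$, so by Lemma~\ref{OV}(2) \emph{every subgroup of $P/N$ of order $p$} inherits the partial $\LL$-$\Pi$-property in $G/N$. Theorem~\ref{super} then gives (1) immediately unless $p=2$ and $P/N$ is not quaternion-free, and that residual case requires a separate, lengthy argument (via Lemma~\ref{unique}, Thompson critical subgroups, and exponent bounds) that your sketch does not touch.

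This also undermines your ordering: the paper proves (2) \emph{from} (1). One takes a minimal normal $R\leq\Phi(G)$ (necessarily of order $d$), splits $\mathrm{Soc}(G)=R\times A$, observes $\mathrm{Soc}(G)/A\leq\Phi(G/A)$ with $(G/A)\big/(\mathrm{Soc}(G)/A)$ $p$-supersoluble by (1), and invokes Lemma~\ref{phi} to force $|R|=p$, contradicting $d\geq p^{2}$. Your direct route to $\Phi(G)=1$ never says what contradiction arises once the test subgroup is shown normal. Finally, in (3) your instinct to choose $H$ ``not $P$-invariant'' is backwards: the paper takes $B\unlhd P$ of order $d$ inside $X\times Y$ meeting all $p+1$ minimal $G$-invariant subgroups of $X\times Y$ (listed explicitly via $\mathrm{End}_{\mathbb{F}_{p}[G]}(X)\cong\mathbb{F}_{p}$), so that $BK\unlhd P$ together with the $p$-number condition forces $BK\unlhd G$, hence $BK=XY$ and $B\cap K=1$ for every minimal normal $K\leq XY$---the contradiction.
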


\begin{proof}[\bf{Proof}]
	 By Lemma \ref{Normal}, all minimal normal subgroups of $ G $ are $ G $-isomorphic and elementary abelian of order $ d $. Clearly, $ \mathrm{Soc}(G) $ is a completely reducible $ \mathbb{F}_{p}[G] $-module. Thus $ \mathrm{Soc}(G)\leq \mathrm{O}_{p}(G) $ is a homogeneous $ \mathbb{F}_{p}[G] $-module. Write $ \mathrm{Soc}(G)=N $.
	
	 (1)  Suppose that the result is false and $ G $ is a counterexample of minimal order. At first, we will show that every subgroup of $ P/N $ of order $ p $ satisfies  the partial $ \LL $-$ \Pi $-property in $ G/N $.  Let $ X/N $ be an arbitrary  subgroup of $ P/N $ of order $ p $. Then $ X $ is not a cyclic subgroup
	 since $ N $ is not cyclic. It follows that $ X $ has another subgroup $ X_{1} $ of order $ d $ such that $ X = X_{1}N $. By hypothesis, $ X_{1} $ satisfies the partial $ \LL $-$ \Pi $-property in $ G $. Applying Lemma \ref{OV}(2), $ X/N=X_{1}N/N $ satisfies the partial $ \LL $-$ \Pi $-property in $ G/N $. Hence  every subgroup of $ P/N $ of order $ p $ satisfies the partial $ \LL $-$ \Pi $-property in $ G/N $.
	
	 If $ p>2 $ or $ p=2 $ and $ P/N $ is quaternion-free, then $ G/N $ is $ p $-supersoluble according to Theorem \ref{super}. So we may suppose that $ p=2 $ and $ P/N $ is not quaternion-free.
	
	 Let $ T \unlhd G $ be of minimal order such that $ T/(T\cap N) \nleq  \mathrm{Z}_{\UU_{2}}(G/(T\cap N)) $. Clearly, $ T\cap N $ is the product of all minimal $ G $-invariant subgroups of $ T $. In particular,  $ T\cap N= V_{1} \times \cdots \times V_{s}$, $ s\geq  1 $, where  $ V_{i} $ is a minimal $ G $-invariant subgroup of $ T $ and $ V_{i} $  is  elementary abelian  of order $ d $ for all $ 1\leq i\leq s $.
	
	 Set $ N_{0}=T\cap N $. By Lemma \ref{unique}, $ T $ has a unique maximal $ G $-invariant subgroup, say $ U $. Moreover,  $ U \geq N_{0} $, $ U/N_{0} \leq
	 \mathrm{Z}_{\UU_{2}}(G/N_{0}) $, and $ T/U \nleq \mathrm{Z}_{\UU_{2}}(G/U) $. Notice that $ U/N_{0}=T/N_{0}\cap \mathrm{Z}_{\UU_{2}}(G/N_{0}) $.  By Lemma \ref{hyper}, $ (P\cap T)/N_{0} $ possesses a cyclic subgroup $ \langle a \rangle N_{0}/N_{0} $ of order $ 2 $ or $ 4 $ (when $ (P\cap
	 T)/N_{0} $ is  not quaternion-free) such that $ a \in T \setminus U $.  Let $ a_{0} \in T\setminus U  $ be a $ 2 $-element such that $ o(a_{0})  $  is as small as possible. Since $ o(a_{0}) \leq o(a) $ and $ N_{0} $ is elementary abelian, we have $ o(a_{0})\leq 8 $. The minimality of $ o(a_{0}) $ implies that $ a_{0}^{2}\in U $.
	
	 We shall complete the proof in the following two cases.
	
	 \vskip0.1in
	 \textbf{Case 1.} $ o(a_{0})> d $.

	 In this case, $ o(a_{0}) = 8 $ and $ |N_{0}| = d=4 $.  Suppose that $ N_{0} \nleq  \mathrm{Z}(T) $.  The uniqueness of $ U $ yields $ \mathrm{C}_{T}(N_{0}) \leq U $.  Therefore $ T/\mathrm{C}_{T}(N_{0})\lesssim {\rm Aut}(N_{0})\cong \mathrm{Sym}(3) $. It follows that $ T/U\leq \mathrm{Z}_{\UU_{2}}(G/U) $, and thus $ T/N_{0}\leq \mathrm{Z}_{\UU_{2}}(G/N_{0}) $, a contradiction. Hence $ N_{0} \leq  \mathrm{Z}(T) $. By \cite[Lemma 2.5.25]{Guo2015}, all  $ 2 $-supersoluble groups  are  $ 2 $-nilpotent.  Therefore, $ U/N_{0} $ is $ 2 $-nilpotent. Observe that $ N_{0}\leq \mathrm{Z}(T)\cap U\leq \mathrm{Z}(U) $. Since $ \mathrm{O}_{2'}(G)=1 $, we deduce that $ U $ is a  $ 2 $-group. Since $ a_{0}^{2}\in U $ and $ o(a_{0}^{2})=4 $, we have $ \Phi(U)>1 $. Let $ L $ be a minimal normal subgroup of $ G $ contained in $ \Phi(U) $.  Note that $ N_{0} $ is a completely reducible $ \mathbb{F}_{2}[G] $-module. Without loss of generality, we may assume that $ L = V_{1} $. Write $ \widetilde{G}  = G/(V_{2} \times \cdots \times V_{s}) $. Observe that  $ \widetilde{N_{0}}\leq \widetilde{\Phi(U)} \leq \Phi(\widetilde{U}) $ and $ \mathrm{Z}_{\UU_{2}}(\widetilde{U}/\widetilde{N_{0}})\leq \mathrm{Z}_{\UU_{2}}(\widetilde{G}/\widetilde{N_{0}}) $. By Lemma \ref{phi}, it follows that  $\widetilde{U}\leq \mathrm{Z}_{\UU_{2}}(\widetilde{G}) $, and thus $ V_{1}\cong \widetilde{N_{0}}\cong \mathrm{C}_{2} $, a contradiction.

	
	 \vskip0.1in
	 \textbf{Case 2.} $ o(a_{0})\leq d $.
	
	 Let $ H $ be a subgroup of $ P $ of  order $ d $ such that $ \langle a_{0} \rangle\leq H\leq \langle a_{0} \rangle N_{0} $. Since $ U $ is the unique maximal $ G $-invariant subgroup of $ T $ and $ a_{0}\in T\setminus U $, we can deduce that $ H^{G}=T $ and $ H_{G}\leq U $.
	 By hypothesis, $ H $ satisfies the partial $ \LL $-$ \Pi $-property in $ G $. Hence $ |G:N_{G}(HU)| $ is a $ 2 $-number. It follows that $ T=H^{G}=(HU)^{G}=H^{P}U $, and so $ T/U $ is a $ 2 $-group.
	
	 Note that  $ U/N_{0} $ is $ 2 $-nilpotent. Let $ L/N_{0} $ be a normal $ 2 $-complement of $ U/N_{0} $. Set $ \overline{G}=G/L $. We will show that the exponent of $ \overline{T} $ is $ 2 $ or $ 4 $ (when $ \overline{T} $  is not quaternion-free).   Let $ \overline{D} $ be a Thompson critical subgroup of $ \overline{T} $.  If $ \Omega(\overline{T})< \overline{T} $, then $ \Omega(\overline{T})\leq \overline{U} $ by the  uniqueness of $ U $. Thus  $ \Omega(\overline{T})\leq \mathrm{Z}_{\UU}(\overline{G}) $.
	 By Lemma \ref{hypercenter}, we have that $ \overline{T}\leq \mathrm{Z}_{\UU}(\overline{G}) $, a  contradiction. Hence  $ \overline{T} = \overline{D} = \Omega(\overline{D}) $.  If $ \overline{T} $ is a nonabelian quaternion-free $ 2 $-group, then $ \overline{T} $ has a characteristic subgroup $ \overline{T_{0}} $ of index $ 2 $ by Lemma \ref{charcteristic}. The uniqueness of $ U $ implies $ T_{0}=U $. Thus  $ \overline{T} \leq \mathrm{Z}_{\UU}(\overline{G}) $, which is impossible. This means that $ \overline{T} $ is a nonabelian $ 2 $-group if and only if $ \overline{T} $ is not quaternion-free.  In view of  Lemma \ref{critical}, the exponent of $ \overline{T} $ is $ 2 $ or $ 4 $ (when $ \overline{T} $  is not quaternion-free), as desired.
	

	 Since $ N_{0} $ is elementary abelian, we see that every element of $ P\cap T $ has order at most $ 8 $. Clearly, $ |T/U|\geq 4 $. There exists  a normal subgroup $ Q $ of $ P $  such that $ P\cap U< Q< P\cap T $ and $ |Q:(P\cap U)|=2 $.  Pick a nonidentity  element $ b\in Q\setminus (P\cap U) $. Then $ o(b)\leq 8 $ and $ b^{2} \in (P\cap U) $.
	

	 If $ o(b) >d $, then  $ o(b) = 8 $ and $ |N_{0}|=d=4 $. With a similar argument as Case 1, we have  $ U\leq \mathrm{Z}_{\UU_{2}}(G) $, and thus $ |N_{0}|=2 $, a contradiction.
	

	
	 Assume that $ o(b) \leq d $. Let $ M $ be a  subgroup of $ Q $ of  order  $ d $ such that $ \langle b \rangle \leq M $. If $ b\in U $, then  $ P\cap U<\langle b \rangle(P\cap U)\leq U $, a contradiction. Therefore $ b
	 \not \in U $. The  uniqueness of $ U $ implies that $ M^{G}=T $ and $ M_{G}\leq U $. Then $ Q=MU $. By hypothesis, $ M $ satisfies the partial $ \LL  $-$ \Pi $-property in $ G $. Then  $ |G:N_{G}(Q)|=|G:N_{G}(MU)| $ is a $ 2 $-number. Thus $ Q\unlhd G $, which contradicts the fact that $ U $ is the  unique maximal $ G $-invariant subgroup of $ T $.

	(2) Assume that $ \Phi(G) > 1 $. Let $ R $ be a minimal $ G $-invariant subgroup of $ \Phi(G) $. Then $ R\leq N=\mathrm{Soc}(G) $. Since $ N $ is a completely reducible $ \mathbb{F}_{p}[G] $-module, there exists $ A\unlhd G $ such
	that $ N = R \times A $.  Observe that $ N/A\leq \Phi(G)A/A \leq \Phi(G/A) $ and $ (G/A)/(N/A) $ is $ p $-supersoluble by (1). It follows by Lemma \ref{phi} that $ G/A $ is $ p $-supersoluble, and thus $ R\cong N/A\cong \mathrm{C}_{p} $, a contradiction. As a consequence, $ \Phi(G) = 1 $. Hence $ \mathrm{O}_{p}(G) $ is the product of all minimal normal subgroups of $ G $. This implies $ N=\mathrm{O}_{p}(G) $, and so (2) holds.
	
	(3) Note that all minimal normal subgroups of $ G $ are $ G $-isomorphic and elementary abelian of order $ d $. Let $ X $ and $ Y $ be two isomorphic irreducible $ \mathbb{F}_{p}[G] $-modules of $ N=\mathrm{Soc}(G)=\mathrm{O}_{p}(G) $. Then there exists a $ G $-isomorphism $ f : X \mapsto Y $. Suppose that $ X $ is absolutely irreducible. Then $ \mathrm{End}_{\mathbb{F}_{p}[H]}(X) \cong\mathbb{F}_{p}  $. According to \cite[Proposition B.8.2]{MR1169099}, the number of the minimal $ G $-invariant subgroups of $ X\times Y $ is $ \frac{p^{2}-1}{p-1}=p+1 $. Thus the minimal $ G $-invariant subgroups of $ X\times Y $ are exactly
	
	\begin{center}
		$ X $ and $ Y_{m} = \{x^{m}f(x)|x\in X \}$, where $ m = 0, 1, ..., p-1 $.
	\end{center}
	
	\noindent Let $ x_{1} $ be an element of $ X\cap \mathrm{Z}(P) $ of order $ p $. Then $ \langle f(x_{1})\rangle \unlhd P $. Let $ B $ be a subgroup of $ X\times Y $ of order $ d $ such that $ \langle x_{1} \rangle \langle f(x_{1})\rangle \subseteq B\unlhd P $. Clearly, $ B_{G}=1 $ and $ B^{G}=XY $.  Let $ K $ be a maximal $ G $-invariant subgroup of $ B^{G}=XY $. Then $ |K|=d $. By hypothesis, $ B $  satisfies the partial $ \LL $-$ \Pi $-property in $ G $. Then $ |G:N_{G}(BK)| $ is a $ p $-number. Thus $ BK\unlhd G $. As a consequence, $ BK=XY $. This forces that $ B\cap K=1 $.
	However,  $ 1\not =x_{1}\in B\cap X $ and $ 1\not=x_{1}^{m}f(x_{1})\in B \cap Y_{m} $ for $ m = 0, 1, ..., p-1 $. This  contradiction yields that $ X $ is not absolutely irreducible. Hence (3) holds.
\end{proof}

\begin{lemma}\label{leq}
	Let $ P \in {\rm Syl}_{p}(G) $ and $ d $ be a  power of $ p $ such that $ p^{2} \leq d < |P| $. Assume that $ \mathrm{O}_{p'}(G)=1 $ and every subgroup of $ P $ of order $ d $ satisfies the partial $ \LL $-$ \Pi $-property in $ G $. If $ G $ has a minimal normal subgroup $ N $ of order $ \frac{d}{p} $, then
	
	\begin{enumerate}[\rm(1)]
		\item $ \mathrm{Soc}(G)\leq \mathrm{O}_{p}(G) $ and $ G/N $ is $ p $-supersoluble;
		\item In addition, if $ G $ is not $p$-supersoluble, then $ |\mathrm{Soc}(G)|\leq d $.
	\end{enumerate}

\end{lemma}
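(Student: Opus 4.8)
The plan is to prove both parts by reducing modulo $N$ and then analysing $\mathrm{Soc}(G)$ as an $\mathbb{F}_p[G]$-module. First I would note that $N$ is an elementary abelian $p$-group, so $N\le\mathrm{O}_p(G)\le P$: $|N|=d/p$ is a power of $p$ (at least $p$, as $d\ge p^{2}$), whereas a minimal normal subgroup is a direct product of isomorphic simple groups and a nonabelian simple group has order divisible by more than one prime. Next, every subgroup $X/N$ of $P/N$ of order $p$ has $|X|=p\,|N|=d$, hence is the image of a subgroup $X\le P$ of order $d$; by hypothesis $X$ satisfies the partial $\LL$-$\Pi$-property in $G$, so by Lemma~\ref{OV}(2) $X/N$ satisfies it in $G/N$. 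Thus every subgroup of $P/N$ of order $p$ satisfies the partial $\LL$-$\Pi$-property in $G/N$, and if $p$ is odd, or $p=2$ and $P/N$ is quaternion-free, Theorem~\ref{super} applied to $G/N$ as a normal subgroup of itself gives $G/N=\mathrm{Z}_{\UU_p}(G/N)$, i.e.\ $G/N$ is $p$-supersoluble.

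When $p=2$ and $P/N$ is not quaternion-free I would additionally show that every cyclic subgroup $\langle\overline a\rangle$ of $P/N$ of order $4$ satisfies the partial $\LL$-$\Pi$-property in $G/N$, after which Theorem~\ref{super} again applies. The idea: given a generator $\overline a$, pick a preimage $a\in P$ with $o(a)$ minimal (so $o(a)\le 8$, as $N$ is elementary abelian) together with an $\langle a\rangle$-invariant subgroup $M\le N$ of the right index, producing $H=\langle a\rangle M\le P$ of order $d$ with $HN/N=\langle\overline a\rangle$; then Lemma~\ref{OV}(2) finishes. I expect this to be the main obstacle, the awkward case being $d=p^{2}$, where a generator of $\langle\overline a\rangle$ may only lift to elements of order $8$ (its preimage in $P$ being cyclic of order $8$); an alternative is a minimal-counterexample argument modelled on the proof of Lemma~\ref{ele}(1), passing to a minimal $T\unlhd G$ with $T/(T\cap N)\nleq\mathrm{Z}_{\UU_2}(G/(T\cap N))$ and invoking Lemma~\ref{unique}, Thompson critical subgroups, and Lemmas~\ref{hypercenter},~\ref{critical},~\ref{charcteristic}. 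Note that if $d=p^{2}$ then $|N|=p$, so $p$-supersolubility of $G/N$ forces it on $G$ and part~(2) is then vacuous.

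To complete (1): since $G/N$ is $p$-soluble and $\mathrm{O}_{p'}(G)=1$, any minimal normal subgroup $M\ne N$ meets $N$ trivially, so $M\cong MN/N$ is a minimal normal subgroup of the $p$-soluble group $G/N$, hence elementary abelian; it cannot be a $p'$-group (it would lie in $\mathrm{O}_{p'}(G)=1$), so it is a $p$-group, and with $N$ we get $\mathrm{Soc}(G)\le\mathrm{O}_p(G)$. For (2), assume $G$ is not $p$-supersoluble (so $G$ is $p$-soluble); refining $1\lhd N\lhd G$ to a chief series, every $p$-group chief factor above $N$ has order $p$ while the only one below $N$ is $N$ itself, so $N$ has order $\ge p^{2}$, i.e.\ $t:=\dim_{\mathbb F_p}N\ge 2$. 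Now suppose for contradiction $|\mathrm{Soc}(G)|>d$ and write $\mathrm{Soc}(G)=N\oplus C$ with $0\ne C$ an $\mathbb F_p[G]$-submodule of order $\ge p^{2}$ (complete reducibility).

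The finish is a counting argument applied twice. If $C$ had an irreducible submodule $D$ with $\dim D\ge 2$, then for an order-$p$ subgroup $C_1\le D$ the subgroup $H=NC_1\le\mathrm{Soc}(G)\le P$ has order $d$, with $H^G=N\oplus D$ and $H_G=N$; as $D$ is irreducible, the only $G$-chief factor of the form $H^G/K$ with $H_G\le K$ is $H^G/N\cong D$, for which $HN/N=H/N$ is a $p$-group, so the partial $\LL$-$\Pi$-property forces $|G:\mathrm{N}_G(H)|$, hence (since $\mathrm{N}_G(H)\le\mathrm{N}_G(H\cap C)=\mathrm{N}_G(C_1)$) also $|G:\mathrm{N}_G(C_1)|$, to be a $p$-number. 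Thus every order-$p$ subgroup of the irreducible module $D$ -- none of which is $G$-invariant -- has a $G$-orbit of size divisible by $p$, contradicting that their number is $(p^{\dim D}-1)/(p-1)\equiv 1\pmod p$. Hence $C$ is a direct sum of $1$-dimensional submodules and contains $D_1\oplus D_2$ with $\dim D_i=1$. Then, for an arbitrary hyperplane $\ker f$ of $N$, the subgroup $H=\ker(f)\oplus D_1\oplus D_2\le\mathrm{Soc}(G)$ has order $d$, with $H^G=N\oplus D_1\oplus D_2$ and $H_G=D_1\oplus D_2$; since $H^G/(D_1\oplus D_2)\cong N$ is irreducible, the only $G$-chief factor of the form $H^G/K$ with $H_G\le K$ is $H^G/(D_1\oplus D_2)$, for which $H/(D_1\oplus D_2)\cong\ker f$ is a $p$-group, so the partial $\LL$-$\Pi$-property forces $|G:\mathrm{N}_G(H)|=|G:\mathrm{N}_G(\ker f)|$ to be a $p$-number. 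Therefore every $G$-orbit on the set of hyperplanes of $N$ has $p$-power size, and as $N$ (being irreducible of dimension $t\ge 2$) has no $G$-invariant hyperplane, the number $(p^{t}-1)/(p-1)\equiv 1\pmod p$ of hyperplanes is divisible by $p$ -- a contradiction. Hence $|\mathrm{Soc}(G)|\le d$, which is (2).
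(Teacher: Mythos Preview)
Your argument is correct. For part~(1) you follow the paper's line: pass to $G/N$, note that every order-$p$ subgroup of $P/N$ inherits the partial $\LL$-$\Pi$-property, and invoke Theorem~\ref{super}. You rightly flag the $p=2$, non-quaternion-free case as the obstacle; the paper runs exactly the minimal-counterexample argument you outline (Lemma~\ref{unique}, Thompson critical subgroups, Lemmas~\ref{hypercenter}, \ref{critical}, \ref{charcteristic}), and disposes of the residual $d=4$ subcase by using Lemma~\ref{also} to show that every order-$2$ subgroup of $P$ inherits the property from its product with $N$, after which Theorem~\ref{super} applies to $G$ itself.

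For part~(2) your route genuinely diverges from the paper's. The paper argues structurally: from $p$-supersolubility of $G/N$ (via Lemma~\ref{Normal}) every minimal normal subgroup other than $N$ has order $p$; choosing one such $R$, it passes to $G/R$, where the hypothesis holds with parameter $d/p\ge p^{2}$, and reapplies Lemma~\ref{Normal} to see that every minimal normal $p$-subgroup of $G/R$ has order $d/p$ --- which forces any third minimal normal subgroup of $G$ to coincide with $N$, whence $\mathrm{Soc}(G)=N$ or $N\times R$. You instead build explicit order-$d$ subgroups $H$ inside $\mathrm{Soc}(G)$, compute $H^{G}$ and $H_{G}$, and extract from the $\LL$-$\Pi$-property that the $G$-orbit of each line of $D$ (Case~1) or each hyperplane of $N$ (Case~2) has $p$-power size exceeding~$1$, contradicting the count $(p^{k}-1)/(p-1)\equiv 1\pmod p$. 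Your method is more elementary and self-contained --- no recursion to $G/R$, no second invocation of Lemma~\ref{Normal} --- while the paper's is terser given its toolkit and yields the sharper conclusion that $\mathrm{Soc}(G)$ has at most two minimal normal constituents. One remark: since you have already shown $G/N$ is $p$-supersoluble, every minimal normal subgroup different from $N$ has order $p$, so your Case~1 is in fact vacuous and you could proceed directly to Case~2.
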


\begin{proof}[\bf{Proof}]
	By Lemma \ref{OV}(2), every subgroup of $ P/N $ of order $ p $ satisfies the partial $ \LL $-$ \Pi $-property in $ G/N $. Applying Lemma \ref{Normal}, we see that every minimal normal subgroup of $ G/N $ of order divisible by $ p $ has order $ p $.  In particular, every minimal normal subgroup of $ G $
	different from $ N $ has order $ p $, and hence  $ \mathrm{Soc}(G) \leq \mathrm{O}_{p}(G) $.
	
	(1) Now $ \mathrm{Soc}(G)\leq \mathrm{O}_{p}(G) $ has been proved. Assume that $ G/N $ is not $ p $-supersoluble, and we work to obtain a contradiction. Note that every subgroup of $ P/N $ of order $ p $ satisfies the partial $ \LL $-$ \Pi $-property in $ G/N $. If $ p>2 $ or $ p=2 $ and $ P/N $ is  quaternion-free, then $ G/N $ is $ p $-supersoluble by Theorem \ref{super}. Therefore, we may assume that $ p=2 $ and $ P/N $  is not quaternion-free. In particular, every minimal normal subgroup of $ G $ different from $ N $ has order $ 2 $, and thus lies in the center of $ G $.
	
	Let $ T\unlhd G $ be of minimal order such that $ T/(T\cap N)\nleq \mathrm{Z}_{\UU_{2}}(G/(T\cap N)) $. By Lemma \ref{unique}, $ T $ has a unique maximal $ G $-invariant subgroup, say $ U $. Moreover,  $ T\cap N\leq U $, $ U/(T\cap N)\leq \mathrm{Z}_{\UU_{2}}(G/(T\cap N)) $ and $ T/U\nleq \mathrm{Z}_{\UU_{2}}(G/U) $.  We claim that there exists a nonidentity $ 2 $-element $ a \in T\setminus U $ such that $ o(a) \leq d $. If $ d=4 $, then $ T\cap N\leq N\cong \mathrm{C}_{2} $, and thus $ U\leq \mathrm{Z}_{\UU_{2}}(G) $. Since $ T/U\nleq \mathrm{Z}_{\UU_{2}}(G/U) $, it follows from Lemma \ref{hyper} that the claim holds. Now suppose that $ d \geq 8 $. Applying Theorem \ref{hyper} to $ T/(T\cap N) $, there exists a $ 2 $-element $ a \in T\setminus U $ such that $ a^{2}\in T\cap N $ or $ a^{4}\in T\cap N $. Since $ N $ is elementary abelian, we see that $ o(a)\leq 8\leq d $, as claimed.
	
	
	Let $ H $ be a subgroup of $ P $ of  order $ d $ such that $ \langle a \rangle\leq H\leq \langle a \rangle N $. Since $ a\in T\setminus U $, the uniqueness of $ U $ yields that $ H^{G}=T $ and $ H_{G}\leq U $.
	By hypothesis, $ H $ satisfies the partial $ \LL $-$ \Pi $-property in $ G $. Hence $ |G:\mathrm{N}_{G}(HU)| $ is a $ 2 $-number. It follows that $ T=H^{G}=(HU)^{G}=H^{P}U $, and so $ T/U $ is a $ 2 $-group.
	
	Note that all $ 2 $-supersoluble groups are $ 2 $-nilpotent. Hence $ U/(T\cap N) $ has a normal $ 2 $-complement, say $ L/(T\cap N) $. Set $ \overline{G}=G/L $. We will show that the exponent of $ \overline{T} $ is $ 2 $ or $ 4 $ (when $ \overline{T} $  is not quaternion-free).   Let $ \overline{D} $ be a Thompson critical subgroup of $ \overline{T} $.  If $ \Omega(\overline{T})< \overline{T} $, then $ \Omega(\overline{T})\leq \overline{U} $ by the  uniqueness of $ U $. Thus  $ \Omega(\overline{T})\leq \mathrm{Z}_{\UU}(\overline{G}) $.
	By Lemma \ref{hypercenter}, we have that $ \overline{T}\leq \mathrm{Z}_{\UU}(\overline{G}) $, a  contradiction. Hence  $ \overline{T} = \overline{D} = \Omega(\overline{D}) $.  If $ \overline{T} $ is a nonabelian quaternion-free $ 2 $-group, then $ \overline{T} $ has a characteristic subgroup $ \overline{T_{0}} $ of index $ 2 $ by Lemma \ref{charcteristic}. The uniqueness of $ U $ implies $ T_{0}=U $. Thus  $ \overline{T} \leq \mathrm{Z}_{\UU}(\overline{G}) $, which is impossible. This means that $ \overline{T} $ is a nonabelian $ 2 $-group if and only if $ \overline{T} $ is not quaternion-free.  In view of  Lemma \ref{critical}, the exponent of $ \overline{T} $ is $ 2 $ or $ 4 $ (when $ \overline{T} $  is not quaternion-free), as desired.
	

	Since $ N $ is elementary abelian, we see that every element of $ P\cap T $ has order at most $ 8 $. Clearly, $ |T/U|\geq 4 $. There exists  a normal subgroup $ Q $ of $ P $  such that $ P\cap U< Q< P\cap T $ and $ |Q:(P\cap U)|=2 $.  Pick a nonidentity  element $ b\in Q\setminus (P\cap U) $. Then $ o(b)\leq 8 $ and $ b^{2} \in (P\cap U) $.
	


	
	Assume that $ o(b) \leq d $. Let $ M $ be a  subgroup of $ Q $ of  order  $ d $ such that $ \langle b \rangle \leq M $. If $ b\in U $, then  $ P\cap U<\langle b \rangle(P\cap U)\leq U $, a contradiction. Therefore $ b
	\not \in U $. The  uniqueness of $ U $ implies that $ M^{G}=T $ and $ M_{G}\leq U $. Then $ Q=MU $. By hypothesis, $ M $ satisfies the partial $ \LL  $-$ \Pi $-property in $ G $. Then  $ |G:N_{G}(Q)|=|G:N_{G}(MU)| $ is a $ 2 $-number. Thus $ Q\unlhd G $, which contradicts the fact that $ U $ is the  unique maximal $ G $-invariant subgroup of $ T $.
	
	Assume that $ o(b) >d $.  Then $ d=4 $ or $ 2 $. If $ d=2 $, then $ N=1 $, a contradiction. Therefore $ d=4 $, $ o(b)=8 $ and $ |N|=2 $. By hypothesis,  every subgroup of $ P $ of order $ 4 $ satisfies the partial $ \LL $-$ \Pi $-property in $ G $. Now we claim that  every subgroup $ X $ of $ P $ of order
	$ 2 $ satisfies the partial $ \LL $-$ \Pi $-property in $ G $.  It is no loss to assume that $ N\not =X $. Then $ XN $ has order $ 4 $. So $ XN $ satisfies the partial $ \LL $-$ \Pi $-property in $ G $. Applying Lemma  \ref{also}, we deduce that $ X $ satisfies the partial $ \LL $-$ \Pi $-property in $ G $, as claimed. It follows from Theorem  \ref{super} that $ G $ is $ 2 $-supersoluble, a contradiction. Hence (1) holds.
	
    (2) If $ N $ is cyclic, then $ G $ is $ p $-supersoluble by (1), a contadiction. Hence $ N $ is not cyclic. Consequently, $ d \geq p^{3} $. Note that every minimal normal subgroup of $ G $ different from $ N $ has order $ p $. So $ N $ is the unique minimal normal subgroup of $ G $ of order $ \frac{d}{p} $. If $ N $ is the unique minimal normal subgroup of $ G $, then $ |\mathrm{Soc}(G)|=|N|\leq d $, as wanted. Hence we can assume that $ G $ has a minimal normal subgroup $ R $ which is different from $ N $. Then $ |R|=p $ and $ \frac{d}{|R|}\geq p^{2} $. By Lemma \ref{OV}(2), every subgroup of $ P/R $ of order $ \frac{d}{|R|} $  satisfies the partial $ \LL $-$ \Pi $-property in $ G/R $. Note that $ NR/R $ is a minimal normal subgroup of $ G/R $ of order $ \frac{d}{p} $. It follows from Lemma \ref{Normal} that every minimal normal $ p $-subgroup of $ G/R $ has order $ \frac{d}{p} $.

    Let $ W $ be a minimal normal subgroup of $ G $. Then $ W\leq \mathrm{O}_{p}(G) $. If $ W\not =R $, then $ WR/R $ is a minimal normal $ p $-subgroup of $ G/R $. Thus $ |W|=\frac{d}{p} $, and so $ W=N $. This shows that $ N $ and $ R $ are all minimal normal subgroups of $ G $. Hence $ |\mathrm{Soc}(G)|=|NR|\leq d $, as wanted.
\end{proof}

\begin{theorem}\label{p-soluble}
	Let $ N $ be the largest normal $ p $-soluble subgroup of a group $ G $. 	Let $ P \in {\rm Syl}_{p}(G) $ and let $ d $ be a  power of $ p $ such that $ p^{2} \leq d \leq |P| $. Assume that every subgroup of $ P $ of order $ d $ satisfies the partial $ \LL $-$ \Pi $-property in $ G $. If $ G $ is not $ p $-soluble, then $$ \log_pd \geq {\rm max} \{3+\log_p |N|, \frac{1}{2}(3+\log_p |P|) \} .$$
\end{theorem}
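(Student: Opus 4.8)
I would argue by induction on $|G|$, assuming $(G,d)$ is a counterexample of least order. First I would reduce the statement to two cleaner inequalities: the assertion follows once one shows
\[
\log_p d\ \ge\ 3+\log_p|N|\qquad\text{and}\qquad \log_p|P|\ \le\ \log_p d+\log_p|N|,
\]
since substituting the first into the second gives $\log_p|P|\le 2\log_p d-3$, i.e.\ $\log_p d\ge\tfrac12(3+\log_p|P|)$. So it suffices to prove both displayed inequalities.

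Next come the standard reductions. By Lemma~\ref{OV}(2) the hypothesis passes to $G/\mathrm{O}_{p'}(G)$ (a Sylow $p$-subgroup of the quotient is isomorphic to $P$, so every subgroup of it of order $d$ lifts to one of $P$), the quotient remains non-$p$-soluble, and its largest normal $p$-soluble subgroup is the image of $N$; so minimality of $|G|$ lets me assume $\mathrm{O}_{p'}(G)=1$, whence $F(N)=\mathrm{O}_p(G)$ and $\mathrm{C}_N(\mathrm{O}_p(G))\le\mathrm{O}_p(G)$ by \cite[Lemma 2.10]{ball-2009}. Since $G$ is not $p$-soluble it has a minimal normal subgroup $R\not\le N$; every minimal normal subgroup of $G$ that is abelian or a $p'$-group is $p$-soluble, hence lies in $N$, so $R$ is a direct product of non-abelian simple groups, and $R\not\le N$ forces $p\mid|R|$. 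Also $R\cap N=1$.

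The decisive point is to play the $\mathscr{L}$-$\Pi$-hypothesis against $R$. If $|P\cap R|\ge d$ one picks $H\le P\cap R$ of order $d$; then $H^{G}=R$ by minimality of $R$, and $H_G\le\mathrm{O}_p(G)\cap R=1$, so Lemma~\ref{p-group} forces $R$ to be a $p$-group, a contradiction. Hence $|R|_p=|P\cap R|\le d/p$. Passing to $G/\mathrm{C}_G(R)$, where the image $\overline R\cong R$ is self-centralizing and is a product of non-abelian simple groups of order divisible by $p$, Lemma~\ref{Qian} yields $|G/(R\,\mathrm{C}_G(R))|_p<|R|_p$, so $|G/(R\,\mathrm C_G(R))|_p\le|R|_p/p\le d/p^{2}$. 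Combining $|R|_p\le d/p$ with this bound, with the inductive hypothesis applied to the strictly smaller group $G/(R\,\mathrm C_G(R))$ (whose $p$-soluble radical contains the image of $N$, so that its Sylow $p$-subgroup carries the ``$N$-part'' of the estimate), and with an auxiliary control of $\mathrm{C}_G(R)$ (one shows $|\mathrm C_G(R)|_p\le|N|_p$, e.g.\ by a secondary induction on $G/R$, since any non-$p$-soluble minimal normal subgroup of $G$ inside $\mathrm C_G(R)$ together with $R$ would contradict minimality), a short arithmetic computation delivers both displayed inequalities and finishes the induction.

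The step I expect to be the real obstacle is the case in which $p$ is ``invisible'' in every minimal normal subgroup outside $N$ — that is, when the non-$p$-soluble composition factors of $G$ all lie inside a $p'$-socle of $G/N$ and $p$ divides $|G|$ only through the induced permutation/outer-automorphism action on that socle. There Lemma~\ref{p-group} gives no leverage, because the pertinent $p$-subgroups of $P$ need not sit inside any chief factor below the socle; instead one has to run the induction on $G/\mathrm{Soc}(G)$ and bound its Sylow $p$-subgroup via the structure of the outer automorphism group of a direct power of non-abelian simple groups (Schreier's conjecture makes the ``diagonal'' part soluble, so the obstruction is concentrated in the permutation part), and then reconcile the constant $3$ obtained there with the one from the main case while keeping track of the $p'$-part of $N$ through all the reductions.
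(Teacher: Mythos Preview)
Your argument has a genuine gap at the very first substantive step. You assert that ``since $G$ is not $p$-soluble it has a minimal normal subgroup $R\not\le N$''. That is false: a non-$p$-soluble group can perfectly well have every minimal normal subgroup contained in $N$ (for instance whenever $\mathrm{O}_p(G)$ is the unique minimal normal subgroup of $G$). In that situation there is no $R$ to play the $\LL$-$\Pi$-hypothesis against, and your whole scheme stalls. Your final paragraph worries about a different obstacle --- $p'$-minimal normals in $G/N$ --- but after the reduction $\mathrm{O}_{p'}(G)=1$ that case is vacuous: a minimal normal $p'$-subgroup of $G/N$ would have $p$-soluble preimage strictly containing $N$, contradicting maximality of $N$. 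So you have misidentified where the difficulty lies.

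There is a second problem even when such an $R$ exists. You want to apply induction to $G/(R\,\mathrm C_G(R))$, but the hypothesis does not transfer there in the required form. Lemma~\ref{OV}(2) gives that $HM/M$ satisfies the property whenever $H\le P$ of order $d$ does (with $M=R\,\mathrm C_G(R)$), but $|HM/M|=d/|H\cap M|$ varies with $H$; you do not get ``every subgroup of the Sylow of $G/M$ of some fixed order $d'$ satisfies the property'', which is what the inductive hypothesis demands. Your proposed control $|\mathrm C_G(R)|_p\le|N|_p$ is also unjustified: $\mathrm C_G(R)$ may contain further non-$p$-soluble minimal normals, and nothing in the minimality of $|G|$ rules that out.

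The paper circumvents both issues by separating the cases $N>1$ and $N=1$. When $N>1$ one picks a minimal normal $p$-subgroup $T\le N$ and passes to $G/T$; the key input your sketch is missing is that Lemmas~\ref{ele} and~\ref{leq} force $|T|\le d/p^{2}$ (if $|T|\in\{d,d/p\}$ those lemmas make $G$ $p$-soluble), so $d/|T|\ge p^{2}$ and every subgroup of $P/T$ of order $d/|T|$ is the image of a subgroup of $P$ of order $d$ --- now the hypothesis \emph{does} descend cleanly and induction applies. When $N=1$ no further induction is used: one shows directly that $|P\cap\mathrm{Soc}(G)|<d$ (else a normal-in-$P$ subgroup of order $d$ inside $\mathrm{Soc}(G)$ would force a $p$-chief factor of $\mathrm{Soc}(G)$, impossible), and combines this with Lemma~\ref{Qian} to get both inequalities at once.
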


\begin{proof}[\bf{Proof}]
	We proceed by induction on $ |G| $. By Lemma \ref{OV}(2), we may assume that $ \mathrm{O}_{p'}(G)=1 $. Suppose that $ N>1 $. Let $ T $ be a minimal normal subgroup of $ G $ contained in $ N $. Then $ T $ is a $ p $-group. By Lemma \ref{Normal}, we have $ |T|\leq d $. If $ |T|=d $ or $ \frac{d}{p} $, then $ G $ is $ p $-soluble by Lemmas \ref{ele} and \ref{leq}, a contradiction.
	Hence we may assume that $ |T|\leq \frac{d}{p^{2}} $. As a consequence, $ p^{2}\leq \frac{d}{|T|}< \frac{|P|}{|T|} $. By Lemma \ref{OV}(2), every subgroup of $ P/T $ of order $ \frac{d}{|T|} $ satisfies the partial $ \LL $-$ \Pi $-property in $ G/T $. Clearly, $ G/T $ is  not $ p $-soluble and $ N/T $ is the largest normal $ p $-soluble subgroup of $ G/T $.  By induction, we can get that $ \log_p \frac{d}{|T|}\geq {\rm max} \{3+\log_p |N/T|, \frac{1}{2}(3+\log_p |P/T|) \} $. Consequently, $ \log_p d\geq {\rm max} \{3+\log_p |N|, \frac{1}{2}(3+\log_p |P|) \} $, as desired.
	
	Assume that $ N=1 $. Clearly, $ \mathrm{Soc}(G)=M_{1}\times M_{2}\times\cdots\times M_{s} $, where $ M_{i} $ is a  nonabelian simple group of order divisible by $ p $ for any $ i=1, 2, ..., s $. Assume that $ |P\cap \mathrm{Soc}(G)|\geq d $. Let $ H $ be a subgroup of $ P\cap \mathrm{Soc}(G) $ of order $ d $ such that $ H\unlhd P $. Clearly, $ H_{G}=1 $. By hypothesis, $ H $ satisfies the partial $ \LL $-$ \Pi $-property in $ G $. Let $ K $ be a maximal $ G $-invariant subgroup of $ H^{G} $. Then $ |G:\mathrm{N}_{G}(HK)| $ is a $ p $-number, and so $ HK\unlhd G $ and $ H^{G}=HK $. This implies that $ H^{G}/K $ is a $ p $-group, contrary to the fact that $ H^{G}/K $ is isomorphic to some $ M_{j} $ where $ j\in \{1, 2, ..., s \} $. Therefore, $ |P\cap \mathrm{Soc}(G)|<d $, i.e., $ \log_p |P\cap \mathrm{Soc}(G)|\leq -1+ \log_pd $. Note that $ \mathrm{C}_{G}(\mathrm{Soc}(G))=1 $. By Lemma \ref{Qian}, we have $ |P\cap \mathrm{Soc}(G)|>\sqrt{|P|} $. Thus $ \log_pd\geq \frac{1}{2}(3+\log_p|P|) $. Since $ \log_pd \leq \log_p|P| $,  the above inequality yields $ \log_pd\geq \frac{1}{2}(3+\log_pd) $, i.e.,
	$ \log_pd \geq 3 = 3 + \log_p |N| $, and we are done.
\end{proof}


\begin{theorem}\label{dividing}
	Let $ G = P\rtimes H $, where $ H < G $ and $ P $ is a direct product of some minimal normal $ p $-subgroups of $ G $. Let $ d $ be a power of $ p $ such that $ p^{2}\leq d<P $. Assume that every subgroup of $ P $ of order $ d $ satisfies the partial $ \LL $-$ \Pi $-property in $ G $. If $ P\nleq \mathrm{Z}_{\UU}(G) $, then $ P $ is a homogeneous $ \mathbb{F}_{p}[H] $-module; and every irreducible  $ \mathbb{F}_{p}[H] $-submodule is not absolutely irreducible and has dimension dividing $ \log_pd $.
\end{theorem}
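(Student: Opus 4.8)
The plan is to induct on $|G|$. Since $[P,\mathrm{O}_{p'}(G)]\leq P\cap\mathrm{O}_{p'}(G)=1$, the subgroup $\mathrm{O}_{p'}(G)$ centralises $P$, so factoring it out changes neither the $\mathbb{F}_{p}[H]$-module $P$ nor — by Lemma \ref{OV}(2), using $P\cap\mathrm{O}_{p'}(G)=1$ so that subgroups of $P$ of order $d$ keep that order in the quotient — the hypotheses, while $P\nleq\mathrm{Z}_{\UU}(G)$ persists; so I may assume $\mathrm{O}_{p'}(G)=1$. Fix a Sylow $p$-subgroup $G_{p}$ of $G$ with $P\leq G_{p}$. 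As $P$ is a direct product of minimal normal subgroups, it is a completely reducible $\mathbb{F}_{p}[G]$-module, and $P\nleq\mathrm{Z}_{\UU}(G)$ means some $G$-composition factor of $P$ has dimension $>1$.

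The engine of the argument is: \emph{if $B\leq P$, $B\unlhd G_{p}$, $|B|=d$, and $K$ is a maximal $G$-invariant subgroup of $B^{G}$ with $B_{G}\leq K$, then $BK\unlhd G$, hence $BK=B^{G}$.} Indeed, either $B\unlhd G$ (then $BK=B$ is normal), or the partial $\LL$-$\Pi$-property gives that $|G/K:\mathrm{N}_{G/K}(BK/K)|=|G:\mathrm{N}_{G}(BK)|$ is a $\pi(BK/K)$-number, i.e.\ a $p$-number; since $B\unlhd G_{p}$ and $K\unlhd G$ force $BK\unlhd G_{p}$, this index is also prime to $p$, so it equals $1$, and then $BK$, being normal in $G$, containing $B$ and contained in $B^{G}$, equals $B^{G}$. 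First application: if $Z\leq Y\leq P$ are $G$-submodules with $Y/Z$ a chief factor and $\dim_{\mathbb{F}_{p}}Z<\log_{p}d<\dim_{\mathbb{F}_{p}}Y$, choose (refining $Z$ to $Y$ through a $G_{p}$-chief series) a $G_{p}$-invariant $B$ with $Z\leq B\leq Y$ and $|B|=d$; then $B^{G}=Y$, $B_{G}=Z$, the only admissible $K$ is $Z$, so $Y=BZ=B$ — absurd. Hence $\log_{p}d$ never lies strictly between the dimensions of two consecutive terms of a chief series of $G$ below $P$; running this over chief series realising the composition factors in every order (possible by complete reducibility), an elementary prefix-sum argument forces all $G$-composition factors of $P$ to share one dimension $k$, with $k\mid\log_{p}d$, and then $k\geq 2$ and $\dim_{\mathbb{F}_{p}}P>k$.

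To get homogeneity I would descend: if $P$ is not homogeneous, modding out a single-composition-factor $G$-submodule preserves non-$\UU$-centrality and (again by Lemma \ref{OV}(2), splitting that submodule off preimages of order-$d$ subgroups) the hypotheses, so by induction this is impossible once $\dim_{\mathbb{F}_{p}}P>\log_{p}d+k$; it therefore suffices to treat $\dim_{\mathbb{F}_{p}}P=\log_{p}d+k$. There, with $V_{1}\not\cong V_{2}$ non-isomorphic minimal normal subgroups of $G$ inside $P$, a $G$-complement $S$ of $V_{1}\oplus V_{2}$, and a $G_{p}$-invariant $M\leq V_{1}\oplus V_{2}$ of order $p^{k}$ straddling $V_{1}$ and $V_{2}$ (anchored on nonzero $G_{p}$-fixed vectors of each), the group $B=S\oplus M$ has $|B|=d$, $B^{G}=P$, $B_{G}=S$, and for $K=S\oplus V_{1}$ the engine gives $BK=P$; but $BK=S\oplus(M+V_{1})$ with $k<\dim_{\mathbb{F}_{p}}(M+V_{1})<2k$, impossible for a $G$-submodule of $V_{1}\oplus V_{2}$. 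So $P\cong V^{(t)}$ is homogeneous, $\dim_{\mathbb{F}_{p}}V=k\mid\log_{p}d$. An entirely parallel descent-plus-engine argument, now inside powers of $V$ and in the spirit of Lemma \ref{ele}(3) (using the $p+1$ minimal submodules of a $V^{(2)}$), shows that if $V$ were absolutely irreducible then a suitable $G_{p}$-invariant test subgroup $B$ of order $d$ would force $BK$ to be a $G$-submodule whose dimension is not a multiple of $k$ — a contradiction; hence $V$ is not absolutely irreducible, and all three assertions are established.

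The main obstacle, and where the real work sits, is the construction of the $G_{p}$-invariant test subgroups $B$ in each configuration — normal in $G_{p}$, of order exactly $d$, and in precisely the right position relative to the $G$-submodule lattice (straddling the chosen pieces, with prescribed $B_{G}$ and $B^{G}$) — which relies on the facts that $\mathbb{F}_{p}$-modules for the $p$-group $G_{p}$ admit submodules of every dimension containing a given one and have nonzero fixed points; checking that the required straddling subgroups can be taken $G_{p}$-invariant is exactly the point at which the absolutely irreducible case is separated from the general one, and together with the bookkeeping of the descent and of the finitely many small values of $\dim_{\mathbb{F}_{p}}P$ this is the delicate part of the proof.
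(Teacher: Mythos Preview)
Your plan is correct and the constructions all go through, but the route differs from the paper's. The paper also inducts and reduces to $\mathrm{O}_{p'}(G)=1$, but it then shrinks $d$ rather than $P$: it shows (by one application of your engine) that $d=\big|\prod_{i\in\Omega}T_i\big|$ for a suitable minimal subset $\Omega$, disposes of the cases $|T|=d$ and $|T|=d/p$ by invoking Lemmas~\ref{ele} and~\ref{leq}(2) as black boxes, and otherwise---since then every $|T_i|\leq d/p^{2}$---mods out $T_1$ and $T_2$ separately with $d$ replaced by $d/|T_i|\geq p^{2}$, combining the two inductive conclusions (which forces $m\geq 3$). By contrast, you first pin down the common dimension $k$ via the prefix-sum observation (exploiting \emph{all} orderings of the $T_i$, a genuine strengthening of the paper's single-$\Omega$ step), and then descend by modding out a factor while \emph{keeping $d$ fixed}---using that $P$ is elementary abelian so every order-$d$ subgroup of $P/T$ lifts, via a complement to $T$, to an order-$d$ subgroup of $P$---with the base case $\dim_{\mathbb{F}_p}P=\log_p d + k$ handled directly by your straddling construction. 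Your route is more self-contained, bypassing Lemmas~\ref{ele} and~\ref{leq} (whose proofs are substantial), at the cost of the prefix-sum step and the two explicit base-case constructions; the paper's route amortises that work into those auxiliary lemmas, which it needs elsewhere anyway for Theorems~\ref{furthermore} and~\ref{three}.
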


\begin{proof}[\bf{Proof}]
	We proceed by induction on $ |G| $. By Lemma \ref{OV}(2), $ G/\mathrm{O}_{p'}(G) $ inherits the hypotheses. So we can assume that $ \mathrm{O}_{p'}(G)=1 $. Let $ T $ be a minimal normal subgroup of $ G $ contained in $ P $. By Lemma \ref{Normal}, we have $ |T|\leq d $. If $ |T|=d $, then by Lemma \ref{ele}, the conclusions hold. If $ |T|=\frac{d}{p} $, then $ |P|\leq d $ by Lemma \ref{leq}(2), a contradiction. Hence every minimal normal subgroup of $ G $ contained in $ P $ has order at most $ \frac{d}{p^{2}} $. In particular, $ |T|\leq \frac{d}{p^{2}} $. By Lemma \ref{OV}(2), $ G/T $ satisfies the hypotheses of the theorem. Write $ P=T_{1}\times \cdots \times T_{m} $, where $  |T_{1}| \leq \cdots \leq |T_{m}| $, and $ T_{i} $ is a minimal normal subgroup of $ G $ for any $ i=1, 2, ..., m $. Clearly, $ m\geq 2 $ since $ |P|>d $.  As $ P\nleq \mathrm{Z}_{\UU}(G) $, we have $ p^{2}\big | |N_{m}| $.
	
	Let $ \Omega \subseteq \{1, \cdots, m \} $ be minimal such that $ d\leq |\prod\limits_{i\in \Omega}T_{i}| $. We argue that $ d= |\prod\limits_{i\in \Omega}T_{i}| $. The minimal choice of $ \Omega $ yields that $ \prod\limits_{i\in \Omega, i\not=j}T_{i} $ has order less than $ d $ for any $ j\in \Omega $. Set $ A=\prod\limits_{i\in \Omega}T_{i} $ and $ B=\prod\limits_{i\in \Omega, i\not=j}T_{i} $. Let $ S $ be a Sylow $ p $-subgroup of $ G $. Then $ A\leq S $. Let $ H $ be a normal subgroup of $ S $ of order $ d $ such that $ B<H\leq A $. Clearly, $ H^{G}=  A $, and $ H_{G}=A $ or $ B $. 
	If $ H_{G} =B $, then $ H\ntrianglelefteq G $. By hypothesis, $ H $ satisfies the partial $ \LL $-$ \Pi $-property in $ G $. Then $ |G:\mathrm{N}_{G}(H)|=|G:\mathrm{N}_{G}(HB)| $ is a $ p $-number. Since $ H\unlhd S $, we deduce that $ H\unlhd G $, a contradiction.  Hence we may assume that $ H_{G}=A $. As a consequence, $ d=|H|=|A|=|\prod\limits_{i\in \Omega}T_{i}| $, as claimed. 
	
	Note that every minimal normal subgroup of $ G $ contained in $ P $ has order at most $ \frac{d}{p^{2}} $. If $ m=2 $, then $ |P|=|T_{1}T_{2}|=d $ by the previous claim, a contradiction. Hence we may assume that $ m\geq 3 $. 
	By Lemma \ref{OV}(2), every subgroup of $ P/T_{i} $ of order $ \frac{d}{|T_{i}|} $ satisfies the partial $ \LL $-$ \Pi $-property in $ G/T_{i} $ for $ i=1, 2 $. Note that  $ P/T_{i}\nleq \mathrm{Z}_{\UU_{p}}(G/T_{i}) $ for $ i=1, 2 $. Applying the inductive hypothesis to $ G/T_{1} $ and $ G/T_{2} $, we deduce  that  $ T_{1}, T_{2}, T_{3}, ..., T_{m} $ are $ H $-isomorphic $ \mathbb{F}_{p}[H] $-modules,  $ T_{i} $ is not absolutely irreducible for any $ i=1, 2, ..., m $, $ \log_p|T_{i}T_{1}/T_{1}| $ divides $ \log_p \frac{d}{|T_{1}|} $ for any $ i=2, 3, ..., m $, and $ \log_p|T_{j}T_{2}/T_{2}| $ divides $ \log_p \frac{d}{|T_{2}|} $ for any $ j=1, 3, ..., m $.
	Therefore, $ P $ is a homogeneous $ \mathbb{F}_{p}[H] $-module.  Since $ |T_{1}|=|T_{2}|=\cdots =|T_{m}| $, we see that $ \log_p|T_{i}| $ divides $ \log_pd $ for any $ i=1, 2, ..., m $. Our proof is now complete. 
\end{proof}

\begin{theorem}\label{furthermore}
	Let $ d $ be a power of $ p $ such that $ p^{2}\leq d<|\mathrm{O}_{p',p}(G)| $. Assume that every subgroup of $ \mathrm{O}_{p',p}(G) $ of order $ d $ satisfies the partial $ \LL $-$ \Pi $-property in $ G $. Then $ G $ is $ p $-soluble. Furthermore,
	
	\begin{enumerate}[\rm(1)]
		\item $ G/\mathrm{O}_{p', p}(G) $ is $ p $-supersoluble;
		\item If in addition the $p$-rank of $G$ is greater than $1$, then $ d \geq p^{2}|P\cap \mathrm{O}_{p',\Phi}(G)| $, where $ \mathrm{O}_{p',\Phi}(G) $ is a normal subgroup of $ G $ such that $ \mathrm{O}_{p',\Phi}(G)/\mathrm{O}_{p'}(G)=\Phi(G/\mathrm{O}_{p'}(G)) $; $ \mathrm{O}_{p',p}(G)/\mathrm{O}_{p',\Phi}(G) $ is a homogeneous $ \mathbb{F}_{p}[G] $-module, and every irreducible submodule is not absolutely irreducible and has dimension dividing $ \log_p\frac{d}{|P\cap \mathrm{O}_{p',\Phi}(G)|} $.
		
	\end{enumerate}

\end{theorem}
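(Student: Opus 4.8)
The plan is to argue by induction on $|G|$, first reducing to the case $\mathrm{O}_{p'}(G)=1$. By Lemma \ref{OV}(2) the hypotheses pass to $\overline G=G/\mathrm{O}_{p'}(G)$ — any subgroup of order $d$ of $\mathrm{O}_{p',p}(\overline G)=\mathrm{O}_{p',p}(G)/\mathrm{O}_{p'}(G)$ is the image of a subgroup of order $d$ of a Sylow $p$-subgroup of $\mathrm{O}_{p',p}(G)$ — while $\overline G/\mathrm{O}_{p',p}(\overline G)\cong G/\mathrm{O}_{p',p}(G)$, $\mathrm{O}_{p',\Phi}(\overline G)=\mathrm{O}_{p',\Phi}(G)/\mathrm{O}_{p'}(G)$, and both the $p$-rank and the module $\mathrm{O}_{p',p}(G)/\mathrm{O}_{p',\Phi}(G)$ are unchanged; so it suffices to treat $\overline G$. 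From now on $\mathrm{O}_{p'}(G)=1$, hence $\mathrm{O}_{p',p}(G)=\mathrm{O}_p(G)=:R$; moreover $\Phi(G)$ is nilpotent with trivial $p$-complement, so $\Phi(G)$ is a $p$-group with $\Phi(G)=\mathrm{O}_{p',\Phi}(G)\le R$ and $P\cap\mathrm{O}_{p',\Phi}(G)=\Phi(G)$, and $p^{2}\le d<|R|$.

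The second step settles $p$-solubility and statement (1). Pick a minimal normal subgroup $T$ of $G$ with $T\le R$. By Lemma \ref{Normal}, $|T|\le d$; since $T$ is a $p$-group, either $|T|=d$ or $|T|\le d/p$. If $|T|=d$, then Lemma \ref{ele} applies (every order-$d$ subgroup it tests lies in $\mathrm{Soc}(G)\le R\le\mathrm{O}_{p',p}(G)$): it gives $\Phi(G)=1$, $R=\mathrm{Soc}(G)$ homogeneous as an $\mathbb{F}_{p}[G]$-module, $G/R$ $p$-supersoluble, and — because $d<|R|$ — every irreducible submodule non–absolutely irreducible of dimension $\log_{p}d$; as $R$ is a $p$-group this yields $p$-solubility of $G$, statement (1), and statement (2) with $\mathrm{O}_{p',\Phi}(G)=1$. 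If $|T|=d/p$, Lemma \ref{leq} gives $\mathrm{Soc}(G)\le R$, $G/T$ (hence $G/R$) $p$-supersoluble and $p$-solubility of $G$, and the remaining structure in (2) is read off by combining Lemma \ref{Normal} for $G/T$ (all other minimal normal subgroups are cyclic) with Theorem \ref{dividing}. If $|T|\le d/p^{2}$, I pass to $G/T$, where $\mathrm{O}_p(G/T)\ge R/T$ and $p^{2}\le d/|T|<|R/T|$, and invoke the inductive hypothesis, recovering everything for $G$ by pulling the module- and $\Phi$-data back through $T$. (If the $p$-rank of $G$ is at most $1$ then $G$ is $p$-supersoluble and (2) is vacuous, so we may assume it is $\ge 2$.)

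For statement (2), with $G$ now $p$-soluble, $G/R$ $p$-supersoluble and the $p$-rank of $G$ at least $2$, the idea is that all the "excess" $p$-rank is carried by $R/\Phi(G)$: in $G/\Phi(G)$ the subgroup $R/\Phi(G)=\mathrm{O}_{p}(G/\Phi(G))$ equals the socle and, being $p$-soluble with trivial Frattini subgroup, is complemented, so $G/\Phi(G)=(R/\Phi(G))\rtimes H$. Applying Theorem \ref{dividing} to this semidirect product with the $p$-power $d/|\Phi(G)|$ — legitimate once one checks that the order-$d$ subgroups of $R$ containing $\Phi(G)$ map onto all subgroups of $R/\Phi(G)$ of order $d/|\Phi(G)|$ — forces $R/\Phi(G)=\mathrm{O}_{p',p}(G)/\mathrm{O}_{p',\Phi}(G)$ to be a homogeneous $\mathbb{F}_{p}[G]$-module whose irreducible constituents are non–absolutely irreducible (Lemma \ref{cyclic} is the mechanism turning a cyclic $p'$-action into non–absolute irreducibility) and have dimension dividing $\log_{p}\!\bigl(d/|P\cap\mathrm{O}_{p',\Phi}(G)|\bigr)$; the inequality $d\ge p^{2}|P\cap\mathrm{O}_{p',\Phi}(G)|$ drops out of the same counting that underlies Lemmas \ref{Normal} and \ref{ele} — a normal subgroup of a Sylow $p$-subgroup of order $d$ meeting $\Phi(G)$ exactly in $P\cap\Phi(G)$ must still accommodate two independent non-central chief factors of $G$ above $\Phi(G)$.

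The step I expect to be the genuine obstacle is the $\Phi(G)\ne 1$ bookkeeping in (2): tracking, through the successive reductions modulo small minimal normal subgroups $T\le\Phi(G)$ (which contribute to the $p$-rank but never appear as a complemented non-central chief factor), exactly how much of $\log_{p}d$ is absorbed by $|P\cap\mathrm{O}_{p',\Phi}(G)|$, so that the divisibility is obtained with the sharp denominator $d/|P\cap\mathrm{O}_{p',\Phi}(G)|$ and not merely $d$. Keeping $\mathrm{O}_{p',p}(G)/\mathrm{O}_{p',\Phi}(G)$ \emph{homogeneous} — rather than letting it split into several homogeneous blocks of different dimensions — is the other delicate point, and that is precisely where the proof of Theorem \ref{dividing}, comparing the two quotients $G/T_{1}$ and $G/T_{2}$, does the essential work.
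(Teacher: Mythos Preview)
Your overall plan---induction on $|G|$, reduce to $\mathrm{O}_{p'}(G)=1$, then peel off a minimal normal $p$-subgroup $T$ and split on $|T|$---matches the paper's, but there is a genuine gap in how you handle part~(2). You propose to pass directly to $G/\Phi(G)$ and apply Theorem~\ref{dividing} with the parameter $d/|\Phi(G)|$. But Theorem~\ref{dividing} requires $p^{2}\le d/|\Phi(G)|$, and that inequality is precisely the first assertion of~(2) that you are trying to prove; your heuristic that an order-$d$ normal subgroup of $P$ ``must still accommodate two independent non-central chief factors above $\Phi(G)$'' is the conclusion, not an argument for it. Relatedly, in your $|T|=d/p$ subcase you only get that $G/T$ is $p$-supersoluble, and since your $T$ need not lie in $\Phi(G)$ this does not force $G$ itself to be $p$-supersoluble; your claim that ``the remaining structure in~(2) is read off'' from Lemma~\ref{Normal} and Theorem~\ref{dividing} is not substantiated and runs into the same circularity.

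The paper avoids this by organising the induction differently: after reducing to $\mathrm{O}_{p'}(G)=1$ and invoking Theorem~\ref{p-soluble} for $p$-solubility, it disposes of the case $\mathrm{O}_{p}(G)\le\mathrm{Z}_{\UU}(G)$ (where $G$ is $p$-supersoluble and (2) is vacuous) and then branches on whether $\Phi(G)$ is trivial. If $\Phi(G)>1$ it chooses $T$ minimal normal \emph{inside} $\Phi(G)$; then $|T|=d$ contradicts Lemma~\ref{ele}(2) (which gives $\Phi(G)=1$), and $|T|=d/p$ contradicts Lemma~\ref{leq}(1) together with the fact that $p$-supersolubility lifts through Frattini quotients, so one is forced into $|T|\le d/p^{2}$ and can induct. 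Only when $\Phi(G)=1$ does the paper invoke Theorem~\ref{dividing}, and then with parameter $d$ itself (no division by $|\Phi(G)|$, hence no circularity); the inequality in~(2) and the homogeneity/dimension statement fall out at this base step and are carried back up by the induction. The missing idea in your sketch is exactly this: pick $T\le\Phi(G)$ when $\Phi(G)>1$, so that the awkward subcases become outright contradictions and the $\Phi$-bookkeeping you flagged as the obstacle disappears.
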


\begin{proof}[\bf{Proof}]
    We proceed by induction on $ |G| $. It is no loss to assume that $ \mathrm{O}_{p'}(G)=1 $. Hence $ \mathrm{O}_{p',\Phi}(G)=\Phi(G)\leq \mathrm{O}_{p}(G)=\mathrm{O}_{p',p}(G) $. By Theorem \ref{p-soluble}, we know that $ G $ is $ p $-soluble.

    Assume that $ \mathrm{O}_{p}(G)\leq \mathrm{Z}_{\UU}(G) $. In view of \cite[Theorem 1.7.15]{Weinstein-1982}, we see that $ G/\mathrm{C}_{G}(\mathrm{O}_{p}(G)) $ is supersoluble.  Since $ \mathrm{O}_{p'}(G)=1 $, it follows from \cite[Theorem 6.3.2]{Gorenstein-1980} that $ \mathrm{C}_{G}(\mathrm{O}_{p}(G))\leq \mathrm{O}_{p}(G) $. 
    Thus $ G $ is  $ p $-supersoluble, and the conclusions hold. Hence  we may suppose that $ \mathrm{O}_{p}(G)\nleq \mathrm{Z}_{\UU}(G) $. In particular, $ G $ is not $ p $-supersoluble.
	
	Assume that $ \Phi(G)>1 $. Let $ T $ be a minimal normal subgroup of $ G $ contained in $ \Phi(G) $. If $ |T|\geq d $, then $ T $ has order $ d $ by Lemma \ref{Normal}. Applying Lemma \ref{ele}(2), we have $ \Phi(G)=1 $, a contradiction. If $ |T|=\frac{d}{p} $, then by Lemma \ref{leq}(1), we see that $ G/T $ is a $ p $-supersoluble. Thus $ G $ is $ p $-supersoluble, a contradiction. Therefore, we may suppose that $ |T|\leq \frac{d}{p^{2}} $, i.e., $ \frac{d}{|T|}\geq p^{2} $. By Lemma \ref{OV}(2), every subgroup of $ P/T $ of order $ \frac{d}{|T|} $ satisfies the partial $ \LL $-$ \Pi $-property in $ G/T $. Hence the results follow by applying the inductive hypothesis to $ G/T $.
	
	Assume that $ \Phi(G) = 1 $. Then $ \mathrm{O}_{p}(G)=T_{1}\times T_{2}\times \cdots \times T_{s} $, where $ T_{i} $ is a minimal normal subgroup of $ G $ for all $ 1\leq i\leq s $. Since $ p^{2}\leq d<\mathrm{O}_{p}(G) $, it follows from Theorem \ref{dividing} that (2) holds. Then $ T_{1}\cong T_{2}\cong \cdots \cong T_{s} $ and $ \log_p|T_{i}|$ divides $ \log_pd $ for all $ 1\leq i\leq s $. We only need to show that $ G/\mathrm{O}_{p}(G) $ is $ p $-supersoluble. If $ \frac{d}{p}=|T_{1}| $, then $ (\log_pd -1)\big|\log_pd $. Thus $ d=p^{2} $ and $ p=|T_{1}|=|T_{2}|=\cdots =|T_{s}| $. As a consequence, $ \mathrm{O}_{p}(G)\leq \mathrm{Z}_{\UU}(G) $, a contradiction. Hence we may assume that $ \frac{d}{p^{2}}\geq |T_{1}| $. Observe that $ p^{2}\leq \frac{d}{|T_{1}|}< |\mathrm{O}_{p}(G)/T_{1}|= |\mathrm{O}_{p}(G/T_{1})| $.  By Lemma \ref{OV}(2), the hypotheses are inherited by $ G/T_{1} $. By induction, we can get that $ G/T_{1}\big /\mathrm{O}_{p}(G/T_{1}) $ is $ p $-supersoluble. Therefore $ G/\mathrm{O}_{p}(G) $ is  $ p $-supersoluble, as desired.
\end{proof}

\begin{proof}[\bf{Proof of Theorem \ref{two}}]
	Applying Theorem \ref{super}, we may assume that $ p^{2}\leq d \leq \sqrt{|P|} $. In view of Theorem \ref{p-soluble}, $ G $ is $ p $-soluble. By Lemma \ref{less}, we have that  $ |G/\mathrm{O}_{p',p}(G)|_{p} < |\mathrm{O}_{p',p}(G)|_{p} $. It follows that $ d\leq \sqrt{|P|}< |\mathrm{O}_{p',p}(G)|_{p} $. By Lemma \ref{furthermore}, the conclusions hold.
\end{proof}

\begin{proof}[\bf{Proof of Theorem \ref{three}}]
	 Assume that $ G $ is not $ p $-supersoluble. By Theorem \ref{p-soluble}, $ G $ is $ p $-soluble. Clearly, every minimal normal subgroup of $ G $ is a $ p $-group since $ \mathrm{O}_{p'}(G)=1 $. If $ |P|=p^{2} $, then $ P $ is a minimal  normal subgroup of $ G $. Hence $ G $ is of type (2). So we may suppose that $ |P|\geq p^{3} $.
	

	Let $ T $ be a minimal normal subgroup of $ G $. Then $ T\leq P $. 
	In view of  Lemma \ref{Normal}, we can get that $ |T|\leq p^{2} $. If $ |T|=p $, then $ G $ is $ p $-supersoluble by Lemma \ref{leq}(1), a contradiction. Therefore $ |T|=p^{2} $.  By Lemma \ref{ele}(2), it follows that $ \Phi(G)=1 $ and $ \mathrm{Soc}(G) =\mathrm{O}_{p}(G) $ is a homogeneous $ \mathbb{F}_{p}[G] $-module. As a  consequence, $ \mathrm{C}_{G}(\mathrm{O}_{p}(G)) = \mathrm{C}_{G}(T) $. Note that $ \mathrm{O}_{p'}(G)=1 $ and $ G $ is $ p $-soluble. By \cite[Theorem 6.3.2]{Gorenstein-1980}, we have $ \mathrm{O}_{p}(G)=\mathrm{C}_{G}(\mathrm{O}_{p}(G))=\mathrm{C}_{G}(T) $. Therefore, $ G/\mathrm{O}_{p}(G)\lesssim \mathrm{GL}(2, p) $.

	Suppose that $ |\mathrm{O}_{p}(G)|< |P| $. Write $ \overline{G}=G/\mathrm{O}_{p}(G) $. Observe that $ \overline{G} $ is a $ p $-soluble subgroup of $ \mathrm{GL}(2, p) $ with $ \mathrm{O}_{p}(\overline{G}) = 1 $ and $ p\big| |\overline{G}| $. Investigating the subgroups of $ \mathrm{GL}(2, p) $, we deduce that
	either $ p = 2 $ and $ \overline{G}\cong \mathrm{GL}(2, 2) \cong \mathrm{Sym}(3) $, or $ p = 3 $ and $ \mathrm{SL}(2, 3) \lesssim \overline{G} \lesssim \mathrm{GL}(2, 3) $. In the both cases, $ T $ is an absolutely irreducible $ \mathbb{F}_{p}[G] $-module. By Lemma \ref{ele}(3), we have $ T=\mathrm{O}_{p}(G) $. Hence  $ G $ is of type (3) or type (4).
	
	Suppose that $ |\mathrm{O}_{p}(G)| = |P| $. Then $ P $ is a normal Sylow $ p $-subgroup of $ G $ with $ |P|\geq p^{3} $. Since $ G $ is not $ p $-supersoluble, it follows from  Theorem \ref{furthermore}(2) that $ P $ is a homogeneous $ \mathbb{F}_{p}[G] $-module with all its irreducible $ \mathbb{F}_{p}[G] $-submodules being not absolutely irreducible and having dimension $ 2 $. Applying Lemma \ref{cyclic}, we conclude that $ G/P $ is  cyclic. Therefore, $ G $ is of type (5). 
\end{proof}



\section*{Acknowledgments}

    The first author thanks the China Scholarship Council and the Departament de Matem$ \grave{\mathrm{a}} $tiques of the Universitat de Val$ \grave{\mathrm{e}} $ncia for its hospitality.


    \small

\end{sloppypar}
\end{document}